\newcommand\SSa{Section}
\newcommand{\Forall} {{\forall\,}}
\newcommand\bifont {\bm}
\newcommand\RR {\mathbb{R}}
\newcommand\IR {\RR}
\newcommand\R  {\RR}    
\newcommand\Om {{\Omega}}
\newcommand\oOm {\bar{\Om}}
\newcommand\dom{{\Gamma}}
\newcommand\gom{{\Gamma}}
\newcommand\gomIO{{\Gamma_{\mathrm{IO}}}}
\newcommand\gomW{{\Gamma_{\mathrm{W}}}}
\newcommand\RKV {R_{K,V}}
\newcommand\RKB {R_{K,B}}
\newcommand\RKVi {R^i_{K,V}}
\newcommand\RKBi {R^i_{K,B}}
\newcommand\RDKV {R_{K,V}^*}
\newcommand\RDKB {R_{K,B}^*}
\newcommand\RDKVi {R_{K,V}^{*,i}}
\newcommand\RDKBi {R_{K,B}^{*,i}}
\newcommand\T{^{\mkern-1.5mu\mathsf{T}}}
\newcommand\dd {\mathrm{d}}
\newcommand\D {\mathrm{D}}
\newcommand\dx {{\,\dd x}}
\newcommand\dS {{\,\dd S}}
\def\krr{{\mathscr R}}
\newcommand\up {u_h^+}
\newcommand\z {z}
\newcommand\uh {{{u}_h}}
\newcommand\zh {{{z}_h}}
\newcommand\nn  {\bifont{n}}
\newcommand\vv {\bifont{v}}
\newcommand\zz {\bifont{z}}
\newcommand\vvp {\bifont{\varphi}}
\newcommand\pp {{\texttt{p}}}
\newcommand\ppn  {\bm{\mathrm{p}}_{\nn}}
\newcommand\vtheta {{\bm{\vartheta}}}
\newcommand\tvtheta {\tilde{\bm{\vartheta}}}
\newcommand\tnn {\tilde{\nn}}
\newcommand\uu {\bifont{u}}
\newcommand\uuRP {\bifont{u}_{\mathrm{RP}}}
\newcommand\ww {\bifont{w}}
\newcommand\w {\ww}
\newcommand\wt {\tilde\w}
\newcommand\wwh {{\ww_h}}
\newcommand\uBC {\uu_{\Gamma}} 
\newcommand\wBC {\w_{\mathrm{BC}}} 
\newcommand\Mir{{\bifont{m}_{\Gamma}}}  %%% stejne jako \newcommand\Mir{{\mathcal M}}
\newcommand\BC{{\mathscr B}}
\newcommand\ppp {{\bifont{p}}}
\newcommand\hp {{h,\ppp}}    %  hp -- version
\newcommand\VV {{\bifont{V}}}
\newcommand\basis {{{\mathrm B}_{\hp}}}
\newcommand\vp {\varphi}
\newcommand\va {{\pmb{\varphi}}}  %%%{\mbox{\boldmath $\varphi}}}
\newcommand\vah {{\pmb{\varphi}_h}}  %%%{\mbox{\boldmath $\varphi}}}
\newcommand\vahT {{\pmb{\varphi}_h^{\mkern-1.5mu\mathsf{T}}}}  %%%{\mbox{\boldmath $\varphi}}}
\newcommand\vpsi {{\pmb{\psi}}}  %%%{\mbox{\boldmath $\varphi}}}
\newcommand\GG {{\pmb{G}}}
\newcommand\ff {{\bifont{f}}}
\newcommand\F {{\bifont{F}}}
\newcommand\A {\mathbb{A}}
\newcommand\HH {\mathbf{H}}
\newcommand\Hbound {{\HH_{\dom}}}
\newcommand\HboundW {{\HH_{\dom_{\mathrm{W}}}}}
\newcommand\HboundWone {{\HH_{\dom_{\mathrm{W}}}^{1}}}
\newcommand\HboundWtwo {{\HH_{\dom_{\mathrm{W}}}^{2}}}
\newcommand\HboundWi {{\HH_{\dom_{\mathrm{W}}}^{i}}}
\newcommand\HboundIO {{\HH_{\dom_{\mathrm{IO}}}}}
\providecommand{\mesh}{\mathcal{T}_h}
\newcommand\Th {{\mathcal{T}}_h}
\newcommand\Thz{{\mathcal{T}}_{h,0}}
\newcommand\Thm {{\mathcal{T}}_{h,m}}
\newcommand\ThmP {{\mathcal{T}}_{h,m+1}}
\def\bbHj{{\bm{H}^{1}_h}}
\def\norm#1#2{\left\|#1\right\|_{#2} }
\def\normS#1#2{\|#1\|_{#2} }
\newcommand\VVh {{\bm{V}_h}}
\newcommand\llbracket {[\![}
\newcommand\rrbracket {]\!]}
\def\jump#1{\llbracket#1\rrbracket}
\def\aver#1{\{\!\!\{#1\}\!\!\}}
\newcommand\bS   {{\bifont{S}}}
\newcommand\bShp {{\bS_{h}^{\pp}}}
\newcommand\bShpp {{\bS_{h}^{\pp+1}}}
\newcommand\bShzp {\bS_{h,0}^\pp}
\newcommand\bShmp {\bS_{h,m}^\pp}
\newcommand\bShmP {\bS_{h,m+1}^\pp}
\newcommand\bShmpp {\bS_{h,m}^{\pp+1}}
\newcommand\Nhp {{N_{\hp}}}
\newcommand\ah {{a_h}}
\newcommand\ahP {{a_h^{\prime}}}
\newcommand\sahL {{a_h^{\scriptscriptstyle\mathrm{L}}}}
\newcommand\bbh {{\bka_h}}
\newcommand\bbhP {{\bka_h'}}
\newcommand\ahL {{\bka_h^{\scriptscriptstyle\mathrm{L}}}}
\newcommand\bbht {{\tilde{\bka}_h}}
\newcommand\matA{\mathbb{A}}
\newcommand\matC{\mathbb{C}}
\newcommand\matG{\mathbb{G}}
\newcommand\matH{\mathbb{H}}
\newcommand\matL{\mathbb{L}}
\newcommand\matM{\mathbb{M}}
\newcommand\matN{\mathbb{N}}
\newcommand\matQ{\mathbb{Q}}
\newcommand\matP{\mathbb{P}}
\newcommand\matR{\mathbb{R}}
\newcommand\matT{\mathbb{T}}
\newcommand\matU{\mathbb{U}}
\def\PPP {\bifont{P}}
\newcommand\W {{\pmb{\xi}}}
\def\bnul{{\bf 0}}
\newcommand\chij {{\zeta_1}}
\newcommand\chid {{\zeta_2}}
\newcommand\chit {{\zeta_3}}
\newcommand\chic {{\zeta_4}}
\newcommand\chiLi {{\zeta_i^{\rm{L}}}}
\newcommand\chijL {{\zeta_1^{\rm{L}}}}
\newcommand\chidL {{\zeta_2^{\rm{L}}}}
\newcommand\chitiL {{\zeta_3^{i,\rm{L}}}}
\newcommand\chiti {{\zeta_3^{i}}}
\newcommand\chicL {{\zeta_4^{\rm{L}}}}
\newcommand\fluxVS {{\HH_{\mathrm{VS}}}}
\newcommand\HboundLOne {{\HH_{\dom_{\mathrm{W}}}^{\mathrm{1,L}}}}
\newcommand\HboundLTwo {{\HH_{\dom_{\mathrm{W}}}^{\mathrm{2,L}}}}
\newcommand\HboundLI {{\HH_{\dom_{\mathrm{W}}}^{\mathrm{i,L}}}}
\def\pd{\partial}
\newcommand\press {\mathrm{p}}
\newcommand\wh {\w_h}
\newcommand\whb {\bar{\w}_h}
\newcommand\zzh {{\zz_h}}
\newcommand\whm {\w_{h,m}}
\newcommand\zzhm {\zz_{h,m}}
\newcommand\whmp {\w_{h,m}^+}
\newcommand\zzhmp {\zz_{h,m}^+}
\newcommand\whp {\w_h^+} 
\newcommand\zhp {\zz_h^+}
\newcommand\wKp {\w_K^+} 
\newcommand\zKp {\zz_K^+}
\newcommand\whN {\w_h^{0}} 
\newcommand\whk {\w_h^{k}} 
\newcommand\whkP {\w_h^{k+1}} 
\newcommand\dhk {\bifont{d}_h^k}
\newcommand\defD {{\mathcal D}}
\newcommand\dK {\partial K}
\newcommand\dKI {\partial K \backslash \gom}
\newcommand{\vertical}[1]{\begin{sideways}{#1}\end{sideways}}
\newcommand{\DoF} {{\mathrm{DoF}}}
\newcommand\Br {\mathrm{B}_1}
\def\bka{\bifont{a}} %{\bifont{a$\unboldmath}}
\def\bkn{\bifont{n}}%{{\bifont{d$\unboldmath}}
\newcommand\J{J}
\newcommand\Jd{J_h}
\newcommand\JdL{J_h^{\mathrm{L}}}
\newcommand\JP {{J^{\prime}}}
\newcommand\plus{^{\scriptscriptstyle (+)}}
\newcommand\minus{^{\scriptscriptstyle (-)}}
\newcommand\Plus{^{+}}
\newcommand\Minus{^{-}}
\providecommand{\intx}[4]{\int_{#1}^{#2}#3\,\mathrm{d}#4}
\providecommand{\intS}[2]{\int_{#1}#2 \,\dS}
\newcommand\Vp {\bifont{\tilde V}}
\newcommand\Vh {V_h}
\newcommand\WW {{\bifont{H}}}
\newcommand\WWh {{\WW}}
\newcommand\Rtri {\mathcal{R}_h^{(3)} }
\providecommand{\zzhi}[1]{z_h^{#1} }
\providecommand{\HboundMatrix}[1]{\matH_{\mathrm{W}}^{\mathrm{#1,L}}}
\providecommand{\resDFK}[2]{\bifont{R}^*_K(#1, #2) }
\providecommand{\resDFI}[2]{\bifont{r}^*_K(#1, #2) }
\providecommand{\resFK}[1]{\bifont{R}_K(#1) }
\providecommand{\resFE}[1]{\bifont{r}_K(#1) }
\providecommand{\resDFKi}[2]{\bifont{R}^{*,i}_K(#1, #2) }
\providecommand{\resDFIi}[2]{\bifont{r}^{*,i}_K(#1, #2) }
\providecommand{\resFKi}[1]{\bifont{R}^i_K(#1) }
\providecommand{\resFEi}[1]{\bifont{r}^i_K(#1) }
\providecommand{\resEul}[1]{r_h(\wwh)(#1) }
\providecommand{\resEulD}[1]{r_h^*(\wwh, \zzh)(#1) }
\providecommand{\resEulTrue}[1]{r_h(\ww)(#1) }
\providecommand{\resEulDTrue}[1]{r_h^*(\ww, \zz)(#1)  }
\newcommand\etaI {\eta^{\rm I}}
\newcommand\etaIK {\eta^{\rm I}_K}
\newcommand\etaII {\eta^{\rm I\!I}}
\newcommand\etaIIK {\eta^{\rm I\!I}_K}
\newcommand\etaIIIK {\eta^{\rm I\!I\!I}_K}
\newcommand\TOL {\mathrm{TOL}}
\newcommand\U {{\pmb{\xi}}} % {{\bkU}}
\newcommand\diag{\mathrm{diag}}
\def\dim{{\rm dim}}
\newcommand\restr[2]{{% we make the whole thing an ordinary symbol
  \left.\kern-\nulldelimiterspace % automatically resize the bar with \right
  #1 % the function
  \vphantom{\big|} % pretend it's a little taller at normal size
  \right|_{#2} % this is the delimiter
  }}
\newcommand\tleft[2]{{% we make the whole thing an ordinary symbol
  \left.\kern-\nulldelimiterspace % automatically resize the bar with \right
  #1 % the function
  \vphantom{\big|} % pretend it's a little taller at normal size
  \right|^{#2}_{-} % this is the delimiter
  }}
\newcommand\tright[2]{{% we make the whole thing an ordinary symbol
  \left.\kern-\nulldelimiterspace % automatically resize the bar with \right
  #1 % the function
  \vphantom{\big|} % pretend it's a little taller at normal size
  \right|^{#2}_{+} % this is the delimiter
  }}
\newcommand{\cD}{{c_{\mathrm{D}}}}
\newcommand{\cL}{{c_{\mathrm{L}}}}
\newcommand{\cM}{{c_{\mathrm{M}}}}
\newcommand{\Min}{{M_{\infty}}}
\newcommand {\wrt} {{w.\,r.\,t.\ }}
\begin{document}

\title{Goal-oriented anisotropic $hp$-adaptive discontinuous Galerkin method for the Euler
  equations\thanks{This work was supported by grant No. 20-01074S of the Czech Science Foundation.}
}
%\subtitle{Do you have a subtitle?\\ If so, write it here}

\titlerunning{Goal-oriented anisotropic $hp$-adaptive DGM for the Euler equations}        % if too long for running head

\author{V{\'\i}t Dolej{\v s}{\'\i}  \and
        Filip Roskovec %etc.
}

%\authorrunning{Short form of author list} % if too long for running head

\institute{V. Dolej{\v s}{\'\i} \at
  Charles University,  Faculty of Mathematics and Physics,
  Sokolovsk\'a 83, 
  186 75 Prague, 
  Czech Republic,
  \email{dolejsi@karlin.mff.cuni.cz}           %  \\
  %             \emph{Present address:} of F. Author  %  if needed
  \and
  F. Roskovec \at
  Charles University, Faculty of Mathematics and Physics,
  Sokolovsk\'a 83, 
  186 75 Prague, 
  Czech Republic 
  \email{roskovec@gmail.com}
}

\date{Received: date / Accepted: date}
% The correct dates will be entered by the editor

\maketitle

\begin{abstract}
  We deal with the numerical solution of the compressible Euler equations
  with the aid of the discontinuous Galerkin (DG) method with
  focus on the goal-oriented error estimates and adaptivity.
  We analyze the adjoint consistency of the DG scheme where the dual problem
  is not formulated by the differentiation of the DG form and the target
  functional but using a suitable linearization of the nonlinear forms.
  Further, we present the goal-oriented anisotropic $hp$-mesh adaptation technique
  for the Euler equations. The theoretical results are supported by numerical
  experiments. 
%Insert your abstract here. Include keywords, PACS and mathematical
%subject classification numbers as needed.
  \keywords{Euler equations \and discontinuous Galerkin method \and target functional \and
    adjoint consistency \and anisotropic $hp$-mesh adaptation}
% \PACS{PACS code1 \and PACS code2 \and more}
 \subclass{65N30 \and 65N50 \and 76H05}
\end{abstract}

%% \section{Notes}
%% \begin{itemize}
%% \item start with scalar case
%% \item difference with Hartmann: Jacobian $\longrightarrow$ $\matC$, adjoint consistency
%%   Theorem~\ref{the:dual_cons}
%% \item $hp$-anisotropic adaptation
%% \item references J. Lu and D. L. Darmofal [Dual-consistency analysis and error estimation for discontinuous
%% Galerkin discretization: application to first-order conservation laws. IMA Journal of
%% Numerical Analysis, 2006. submitted], Hartmann \cite{Hartmann2006Derivation,Hartmann2007Adjoint,Harriman2004importance}
%% \item AMA-GO: \cite{YanoDarmofal12,VendittiDarmofal_JCP02}
%% \item \vit{we give weak formulation of the dual problem}, it is usually omitted.
%% \end{itemize}

\section{Introduction}
\label{sec:intro}

The motion of inviscid compressible fluids is described by the Euler equations whose
efficient numerical solution is still a challenging problem. Among the prominent methods
belongs the discontinuous Galerkin method (DGM) which exhibits an efficient technique for the
numerical solution of various partial differential equations (PDEs).
This paper looks into the solution of the Euler equations by DGM with 
focus on the goal-oriented (or output-based) error estimators and the mesh adaptivity.
The aim is the approximation of a quantity of interest (e.g., the aerodynamics coefficients)
given by a target functional depending on the solution of the problem considered.

The framework of the goal-oriented error estimates is already very-well established
approach namely for linear PDEs, cf. surveys in
\cite{RannacherBook,BeckerRannacher01,GileSuli02}.
This technique is based on the formulation of an adjoint problem whose solution
appears as a weight of the primal residuals.
For a nonlinear PDE (and a nonlinear target functional), the corresponding
adjoint problem has to be derived for a linearized primal problem.
The linearization is carried out by
a differentiation of the primal formulation with respect to its approximate solution,
see \cite{RannacherBook,BeckerRannacher01,GileSuli02} for a general framework and, e.g.,
\cite{Hartmann2006Derivation,HH06:SIPG2,LoseilleDervieuxAlauzet_JCP10,ceze:2013aniso,BalanWoopenMay16} for applications of this approach to compressible flow problems.
A comprehensive review of goal-oriented error estimates and mesh adaptation
in computational fluid dynamics
is given in \cite{FidkowskiDarmofal_AIAA11}.

Let $\ah$ be a semi-linear form representing the discretization of the given (nonliner) PDE,
then the approximate solution $\uh \in \Vh$ is defined by
\begin{align} 
\label{mod:primalh} 
 \ah(\uh, \vp_h) = 0 \qquad \forall \vp_h \in \Vh,
\end{align}
where $\Vh$ is a finite element space.
Further, let $J$ be the target functional defining the quantity of interest $J(u)$ where $u$
is the exact solution. In order to estimate the error $J(u) - J(\uh)$, we introduce
the adjoint (or dual) problem
\begin{align} 
\label{mod:dualh}
\ahP[\uh](\psi_h,\zh) = \JP[\uh](\psi_h) \qquad \forall \psi_h \in \Vh,
\end{align}  
where $\ahP[\uh](\cdot,\cdot)$ and $\JP[\uh](\cdot)$ denote the Fr{\'e}chet derivatives
of $\ah$ and $J$ at $\uh$, respectively, and $\zh$ is the approximate solution of the adjoint
problem.
The form $\ahP[\uh](\cdot,\cdot)$ represents the Jacobian corresponding to
the nonlinear algebraic system \eqref{mod:primalh} and it is also employed for the iterative
solution of \eqref{mod:primalh} by the Newton method.
However, the evaluation of $\ahP$ requires a differentiation
of $\ah$ which is a little laborious and moreover a differentiable numerical flux
is required. Otherwise, an approximation, e.g., by finite differences has to be employed,
cf. \cite{Hartmann2005Role}.

The key properties in goal-oriented error estimates are the consistency and
the adjoint consistency of the numerical scheme which
means that identities \eqref{mod:primalh} and \eqref{mod:dualh} are valid also
for the exact solutions $u$ and $z$ of the primal and adjoint problems, respectively.
It was demonstrated in \cite{Harriman2004importance} that the lack of the adjoint consistency
leads to the decrease of the rate of convergence.
In \cite{Hartmann2007Adjoint} the detailed analysis of the adjoint consistency
of the DGM
for the compressible Euler and the Navier-Stokes equations was presented. It was shown that
the realization of the boundary conditions have to be carefully chosen and,
in some cases, the target functional $J$ has to be adopted to the numerical discretization.

In \cite{st_estims_NS,stdgm_est}, we developed an implicit discretization
of the Euler and Navier-Stokes equations by DGM where the arising nonlinear system
is solved by an iterative method.
Instead of the Jacobian $\ahP$, we employed a suitable consistent linearization $\sahL$
of the form $\ah$.
The algebraic representation of $\sahL$ is called the flux matrix,
its sparsity is the same as the Jacobian and 
this approach does not require the differentiability of the numerical fluxes.
Therefore, there is a natural question if it is possible
to replace $\ahP$ in \eqref{mod:dualh} by $\sahL$ too.
Particularly, if the use of $\sahL$ in \eqref{mod:dualh} still preserves the adjoint
consistency of the numerical scheme. The positive answer would justify the use
of other iterative schemes (except the Newton method) in the goal-oriented
computations.

In this paper we analyze the aforementioned method
using the linearized form $\sahL$ instead of Jacobian
for the compressible Euler equations.
The core of the adjoint consistency is the appropriate treatment of boundary conditions
and the setting of the adjoint problem. We consider here two ways of the realization of
the impermeable boundary condition, the first one is the same as in \cite{Hartmann2007Adjoint}
and the second one is based on the mirror operator as in, e.g., \cite{bas-reb-JCP,harthou2}.
Each of these treatments requires a different modification of the target functional.

The next novelty is this paper is the goal-oriented anisotropic $hp$-mesh adaptation
technique for the Euler equations.
The $h$-variant of this approach (usually for $p=1$ only) was treated in many papers, e.g.,
\cite{VendittiDarmofal_JCP02,VendittiDarmofal_JCP03,LoseilleDervieuxAlauzet_JCP10,Fidkowski11,YanoDarmofal12}. 
In \cite{DWR_AMA,ESCO-18}, we employed some ideas of these papers and
developed the $hp$-variant for linear problems.
Here, we present its extension to the Euler equations which
is relatively straightforward. A different type of anisotropic $hp$-adaptation method
was published recently in \cite{RanMayDol-JCP20} following some ideas from \cite{BalanWoopenMay16}.
Let us also mention papers \cite{HartmanLeicht,ceze:2013aniso} dealing with anisotropic mesh
adaptation for structured (quadrilateral) grids.

The contents of the rest of the paper is the following. In Section~\ref{sec:model},
we briefly recall the Euler equations with several important properties.
In Section~\ref{sec:dual}, we define the quantities of interest
(drag, lift and momentum coefficients) and the corresponding continuous adjoint problems.
Further, Section~\ref{sec:eulDG} contains the discontinuous Galerkin discretization
of the Euler equations with focus on the treatment of boundary conditions.
Section~\ref{sec:EulerNewton} contains the iterative solver for the primal problem
and the definition of the corresponding adjoint problem. Moreover, we prove here
the adjoint consistency of the method which is the main theoretical results of this paper.
Furthermore, in Section~\ref{sec:EE}, we present the resulting goal-oriented error estimates
and the anisotropic $hp$-mesh adaptation algorithm. Finally, numerical experiments
supporting the adjoint consistency and the performance of the mesh adaptive algorithm are
presented in Section~\ref{sec:numer}.
The summary of the results is given in Section~\ref{sec:concl}.
For simplicity, in the whole paper, we restrict ourselves to the case $d=2$ 
but the the majority of actions below can be easily generalized also to $d=3$.

\section{Inviscid compressible flow model}
\label{sec:model}

We recall the Euler equations describing the steady-state flow of
inviscid compressible fluids together with several useful relations which are employed
in the definition of the numerical scheme and the setting of the adjoint problem.

\subsection{Euler equations}
\label{sec:euler}

Let $\Om \subset \matR^2$ be a bounded domain with its boundary $\dom$
occupied by an inviscid compressible fluid. The steady-state flow 
is described by the Euler equation written as
\begin{align}
  \label{eul:EE}
  %\nabla \cdot \fF(\w) =
  \sum_{s=1}^{2}\frac{\partial\ff_s(\w)}{\partial x_s}  = 0,
\end{align}
where $ \w=(w_1,w_2,w_3,w_4)\T = (\rho,\rho v_1,\rho v_2,E)\T$
is the {\em state vector} 
and %$\fF(\w) = (\ff_1(\w) ,\ff_2(\w))$,
\begin{align}
  \label{eul:fs}
  \ff_1(\w) &= \left(\rho v_1,\ \rho v_1^2+\press,\ \rho v_1 v_2,\ (E+\press) v_1 \right)\T, \\
  \ff_2(\w) &= \left(\rho v_2,\ \rho v_1 v_2,\ \rho v_2^2+\press,\ (E+\press) v_2 \right)\T\notag
\end{align}
are the {\em inviscid fluxes}.
We use the standard notation:
$\rho$\,-\,density, $\press$\,-\,pressure 
(symbol $p$ denotes the degree of polynomial approximation),
$E$\,-\,total energy, 
$\vv=(v_1, v_2)$\,-\,velocity vector with its components and
the superscript $\T$ denotes the transpose of a matrix or a vector.

To the above system, we add the thermodynamical relations defining the pressure 
$ \press=(\gamma-1) (E-\rho\vert\vv\vert^{2}/2), $
and the total energy  $  E=\rho(c_v\theta+\vert\vv\vert^{2}/2)$,
where $\theta$\,-\,absolute temperature, $c_v >0$\,-\,specific heat at constant volume,
%$c_{\press} > 0$\,-\,specific heat at constant pressure, 
and  $\gamma > 1$\,-\,Poisson adiabatic constant.
% $R=c_{\press}-c_v > 0$\,-\,gas constant.
Finally, the {\em speed of sound} $a$ and the {\em Mach number} $M$ are given by
$a= \sqrt{\gamma \press/\rho}$ and $M=|\vv|/{a}$, respectively.

Furthermore, we define the space of physically admissible state-vectors 
$\w=(w_1,\ldots,w_4) \T$ such that their density and pressure are positive, i.e., 
\begin{align}\label{defD}
   \defD:=\Big\{\w\in\matR^{4};\ w_1=\rho>0,\
   w_4-\tfrac{1}{2w_1} (w_2^2+w_3^2) =\press/(\gamma-1)>0\Big\}.
\end{align}
Obviously, $\ff_s\in [C^{1}(\defD)]^{4}$, $s=1,2$.
Moreover, we introduce the spaces of vector-valed functions
\begin{align}
  \label{spaces0}
  \WWh &:= [H^1(\Om)]^4,\qquad \VV:=\{\w\in \WWh;\ \w(x)\in\defD\ \mbox{a.e. } x\in\Om\},
\end{align}
where $H^1(\Om)$ is the usual Sobolev space of functions having square integrable first weak
derivatives.

The equations \eqref{eul:EE} are accompanied by suitable boundary conditions
written formally in the form
%\begin{align}
%  \label{eul:BC}
$  \BC(\w) = 0$ on $\gom$.
%\end{align}
They are discussed in \SSa~\ref{sec:BC}.

\subsection{Useful relations}

We introduce several properties of the inviscid fluxes  \eqref{eul:fs}.
For details, we refer, e.g., to \cite[Chapter~8]{DGM-book}.
Let  $\A_s(\w):=\frac{\D\ff_s(\w)}{\D\w}$ be the $4\times 4$ Jacobi matrix 
of the mapping $\ff_s$, $s=1,2$. Then  %defined for $\w\in\defD$:
\begin{align}
  \label{eul:A_s}
  \ff_s(\w) = \A_s(\w) \w,\quad s=1,2,\ \w\in\defD.
\end{align}

Furthermore, let $\nn\in\Br:=\{\nn \in \IR^2;\,|\nn| = 1\}$ be a unit vector, then we define 
the flux of the state vector $\w$ in the direction $\nn$ by 
\begin{align}\label{eul:Pf}
  \PPP(\w,\nn) &  := \sum_{s=1}^{2} \ff_s(\w) n_s 
   =  \left(
  \begin{array}{c}
    \rho \vv \cdot\nn \\
    \rho v_1 \vv \cdot\nn+ \press n_1 \\
    \rho v_2 \vv \cdot\nn+ \press n_2 \\
    (E+\press) \vv \cdot\nn  \\
  \end{array}
    \right). %\notag
\end{align}
Obviously, the Jacobi matrix $\frac{\D\PPP(\w,\nn)}{\D\w}$ can be expressed in the form
\begin{align}\label{eul:PP}
\matP(\w,\nn) := \frac{\D\PPP(\w,\nn)}{ \D\w} = \sum_{s=1}^{2} \frac{\D\ff_s(\w)}{\D\w} n_s = 
    \sum_{s=1}^{2} \A_s(\w) n_s.
\end{align}

%% The matrix $\matP(\w, \nn)$ has the form
%%    { %\small 
%%  \begin{align}\label{eul:PPexplicit2}
%%    \matP (\w, \nn) =  
%%     \left(
%%    \begin{array}{c|c|c|c}
%%      0 & n_1 & n_2 & 0\\
%%      %& & & \\
%%      \frac{\gamma_1}{2}  |\vv|^2n_1 - v_1 \vv\!\cdot\! \nn & 
%%      -\gamma_2 v_1 n_1 + \vv\!\cdot\!\nn  & 
%%      v_1n_2 -\gamma_1 v_2 n_1  
%%      & \gamma_1 n_1 \\
%%      %& & & \\
%%      \frac{\gamma_1}{2}  |\vv|^2n_2 - v_2 \vv\!\cdot\! \nn &
%%      v_2 n_1- \gamma_1 v_1 n_2 & 
%%      -\gamma_2 v_2 n_2 + \vv\!\cdot\!\nn  &
%%      \gamma_1 n_2
%%      \\
%%      %& & & \\
%%      \left(\gamma_1|\vv|^2 -\frac{\gamma E}{\rho} \right) \vv\!\cdot\!\nn \,& 
%%      G n_1 - \gamma_1 v_1 \vv\!\cdot\!\nn  \,&
%%      G n_2 - \gamma_1 v_2 \vv\!\cdot\!\nn  \,&
%%      \gamma \vv\!\cdot\!\nn \\
%%    \end{array}
%%    \right), 
%%  \end{align}
%%    }
%% where $\nn = (n_1,n_2) $, $\gamma_1 = \gamma - 1$, $\gamma_2 = \gamma - 2$ and
%%       $G = \gamma \frac{E}{\rho}  - \frac{\gamma_1}{2}|\vv|^2$. 

Similarly, as in \eqref{eul:A_s}, we have
\begin{align}
  \label{PPP}
  \PPP(\w,\nn)=\matP(\w,\nn) \w\qquad \forall \w\in\defD\ \forall\nn\in\Br.
\end{align}
Further, matrix $\matP$ is diagonalizable,
hence $\matP = \matT^{-1} \matL \matT$, $\matL= \diag(\lambda_1,\dots, \lambda_4)$.
We  introduce its positive and negative parts $\matP^+$ and $\matP^-$, respectively, by
\begin{align}
  \label{Ppm}
  \matP^{\pm} = \matT^{-1} \matL^{\pm} \matT,\ \matL= \diag(\lambda_1^{\pm},\dots, \lambda_4^{\pm}),
  \ \lambda^+ = \max(\lambda, 0),\, \lambda^- = \min(\lambda, 0).
\end{align}
Obviously, $\matP = \matP\Plus+\matP\Minus$.
%where $\lambda^+ = \max(\lambda, 0)$ and $\lambda^- = \min(\lambda, 0)$.

%%%%%%%%%%%%%%%%%%%%%%%%%%%%%%%%%%%%%%%%%%%%%%%%%%%%%%%%%%%%%%%%%%%%%%%%%%%%%%%%
%%%% Boundary conditions %%%%%%%%%%%%%%%%%%%%%%%%%%%%%%%%%%%%%%%%%%%%%%%%%%%%%%%
%%%%%%%%%%%%%%%%%%%%%%%%%%%%%%%%%%%%%%%%%%%%%%%%%%%%%%%%%%%%%%%%%%%%%%%%%%%%%%%%
\subsection{Boundary conditions} 
\label{sec:BC}

In order to specify the boundary conditions we decompose the boundary $\gom$
into two disjoint parts: the impermeable walls $\gomW$ and {inlet/outlet} $\gomIO$ such that
$\overline{\gom}= \overline{\gomW} \cup \overline{\gomIO}$. In the rest of this paper,
the symbol $\nn=(n_1,n_2)$ denotes the unit outer normal either to the domain boundary $\gom$
or to the boundary of a mesh element. %$\dK$, $K\in\Th$.

On $\gomW$,  we prescribe the impermeability condition $\vv \cdot \nn = 0$,
hence we put
\begin{align}
  \label{eul:BCw}
  \BC(\w) := w_2 n_1 + w_3 n_2 = 0 \qquad \mbox{ on } \gomW.
\end{align}

On $\gomIO$, the number of prescribed boundary conditions depends on the
flow regime (subsonic/supersonic inlet/outlet) and it is equal to the number of incoming
characteristics of the linearized problem, e.g., cf. \cite{FEI2}.
We define the boundary operator $\BC$
and prescribe boundary conditions by
\begin{align}
 \label{eul:BCio} 
 \BC(\w) := \matP\Minus(\w,\nn)(\w - \wBC)  \qquad \mbox{ on } \gomIO,
\end{align}
where $\matP\Minus$ is given by \eqref{Ppm} and $\wBC$ is the prescribed state vector,
e.g., from the far-field boundary conditions.

\section{Quantity of interest and continuous adjoint problem}
\label{sec:dual}
The most interesting target quantities in inviscid compressible flows 
are the drag ($\cD$), lift ($\cL$) and momentum ($\cM$) coefficients. 
In this section we introduce the target functional $\J$ representing either of these coefficients 
in a unified way.

\subsection{Quantity of interest} 

Similarly as in \cite{Hartmann2007Adjoint}, we define the target functional %representing the quantity of interest
by the integral identities
\begin{align} 
   \label{eul:cDL}
  J(\w)  = 
%   \frac{1}{C_\infty}
  \intS{\gomW}{\press \, \nn \cdot \vtheta } = \intS{\gomW}{\ppn \cdot \tvtheta } 
  = \intS{\gomW}{ j(\w) },
\end{align}
where $\press$ is the pressure,
$\nn=(n_1,n_2)$ is unit outer normal to $\gomW$,
\begin{align}
  \label{ppn}
  \ppn =\press\, (0,n_1, n_2,0)\T,
\end{align}
$\vtheta = (\vartheta_1,\vartheta_2)\T$ is a given vector in $\R^2$ (cf. hereafter),
$\tvtheta = (0,\vartheta_1,\vartheta_2, 0)\T$ is its extension to $\R^4$
and consequently $j(\w) := \ppn \cdot \tvtheta$. % on $\gomW$.

The vector $\vtheta$ is given for the drag and lift coefficients by 
\begin{align}
\label{eul:thetaDL}
\vtheta_\mathrm{D} = \frac{1}{C_\infty} (\cos(\alpha), \sin(\alpha))\T \quad \text{ and } \quad
\vtheta_\mathrm{L} = \frac{1}{C_\infty} (-\sin(\alpha), \cos(\alpha))\T, 
\end{align}
respectively, where 
$\alpha$ denotes the angle of attack of the flow,
$C_\infty\! =\! \frac{1}{2} \rho_\infty |\vv_\infty|^2 L$,
$ \rho_\infty$ and $\vv_\infty$ are the far-field density and velocity, 
respectively and $L$ is the reference length.
Therefore, we can write
\begin{align} 
   \label{eul:cDL1}
   \cD=  \intS{\gomW}{\ppn \cdot \tvtheta_\mathrm{D} }, \qquad
   \cL=  \intS{\gomW}{\ppn \cdot \tvtheta_\mathrm{L} }.
\end{align}

The coefficient of momentum is defined as 
\begin{align}
  \cM= \frac{1}{C_\infty L}
  \intS{\gomW}{ \press (x - x_{\rm ref})\times ( \matQ(\alpha) \nn )},
\end{align}
where $x_{\rm ref} \in \Om$ is the moment reference point,
$\matQ(\alpha)$ 
%and 
%\begin{align*} 
%\matQ(\alpha) = 
%  \begin{pmatrix} 
%   \cos(\alpha) & -\sin(\alpha) \\ 
%   \sin(\alpha) & \cos(\alpha) 
%  \end{pmatrix}
%\end{align*}
is the rotation matrix trough the angle $\alpha$ in the counterclockwise direction and
$x\times y = x_1 y_2 - x_2 y_1$,  $x,y \in \matR^2$.
In order to obtain the form of the target functional \eqref{eul:cDL}, 
we use the relation
$(x - x_{\rm ref})\times ( \matQ(\alpha) \nn )  = (x - x_{\rm ref}) \matG ( \matQ(\alpha) \nn )$, 
where  $\matG = \left( (0,1)\T, (-1,0)\T \right)$. 
Hence, for the momentum coefficient, we employ in \eqref{eul:cDL}
\begin{align}
 \label{eul:thetaM}
 \vtheta_\mathrm{M}:= \frac{1}{C_\infty L_{\rm ref}}((x - x_{\rm ref}) \matG  \matQ(\alpha) )\T
 \quad\Rightarrow\quad \cM=  \intS{\gomW}{\ppn \cdot \tvtheta_\mathrm{M} }.
\end{align}

Further we define the Fréchet directional derivative
of the target functional $J$ at $\ww\in\VV$ in the direction $\vvp\in\WWh$ given by
\begin{align}
  \label{J:der}
  \JP[\ww](\vvp) = \intS{\gomW}{ j'(\ww)\cdot \vvp },
\end{align}
where $j'(\ww)\in\WWh$ and $j'(\ww)\cdot \vvp = \lim_{t\to 0}\tfrac{1}{t}( j(\ww+t\vvp) - j(\ww))$,
$\vvp\in \WWh$.

The differentiation of pressure $\press=(\gamma-1) (E-\rho\vert\vv\vert^{2}/2)$
with respect to $\w$ gives
\begin{align} 
\label{eul:derP}
\frac{D \press}{D\ww} =
%(\gamma - 1)\begin{pmatrix} 
%     \frac{w_2^2 + w_3^2}{2 w_1^2} \\ 
%    -\frac{w_2}{w_1} \\ 
%    -\frac{w_3}{w_1} \\ 
%    1
%  \end{pmatrix} = 
  (\gamma - 1)\begin{pmatrix} 
    \frac{1}{2} |\vv|^2 ,
    -v_1 ,
    -v_2 , 
    1
  \end{pmatrix} \T
\end{align}
and hence the Jacobi matrix of the vector $\ppn$ (cf. \eqref{ppn}) equals
\begin{align*}%\label{eul:Df}
  \matP_W(\w,\nn) :=
  \frac{D \ppn}{D\ww} =
  \tnn\otimes \frac{D \press}{D\ww} =
  (\gamma-1)
  \left(\begin{array}{cccc}
      0 & 0 & 0 & 0\\
      |\vv|^2\,n_1/2&\ -v_1n_1 & \ -v_2 n_1 &\ n_1 \\
      |\vv|^2\,n_2/2&\ -v_1n_2 &\ -v_2 n_2 &\ n_2 \\
      0 & 0 & 0 & 0\\
    \end{array}
    \right),
\end{align*}
where %$\w\in\defD$,
$\tnn = (0,n_1,n_2,0)\T$ is the extension of $\nn=(n_1,n_2)\T$
and $\otimes$ denotes the vector outer product.
Altogether we get $j'(\ww) = \matP_W(\ww,\nn)\T \tvtheta $ and 
\begin{align} 
\label{eul:DJ}
\JP[\ww](\vvp) = \intS{\gomW}{\tvtheta \T\, \matP_W(\ww,\nn) \vvp }
,\quad \ww\in\VV,\ \vvp\in\WWh.
\end{align}
Let us present several relations between the introduced terms.
\begin{lemma}\label{eul:lemmaP}
 Let $\w \in \defD$, $\vvp\in\R^4$, $\nn=(n_1,n_2)\T$ and $\tnn = (0,n_1,n_2,0)\T$, then
 \begin{subequations}
   \label{lem1}
   \begin{align}
     \label{lem1.1}
     & \matP\T_W(\ww,\nn) \vvp = \frac{D \press(\ww)}{D \ww} \left( \tnn \cdot \vvp \right),\\
     \label{lem1.2}
     &\ppn = \matP_W(\ww,\nn) \ww, \quad \mbox{ where} \quad \ppn = \ppn(\ww), \\
     % indicates the dependence of the pressure $\press$ on $\ww.$
     \label{lem1.3}
     & J(\ww) = \JP[\ww](\ww).
   \end{align}
 \end{subequations}
\end{lemma}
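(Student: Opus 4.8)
The plan is to treat the three identities in sequence, since each follows quickly once the explicit form of $\matP_W(\ww,\nn)$ is in hand. The structural fact that organizes the whole argument is that $\matP_W$ is a rank-one matrix: by its definition it is the outer product $\matP_W(\ww,\nn)=\tnn\otimes\frac{D\press}{D\ww}$, i.e.\ $\matP_W=\tnn\,\big(\tfrac{D\press}{D\ww}\big)\T$. Writing it in this factored form reduces all three claims to elementary contractions together with a single identity for the pressure.

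For \eqref{lem1.1} I would just transpose the outer product. Since $\matP_W=\tnn\,\big(\tfrac{D\press}{D\ww}\big)\T$, its transpose is $\matP_W\T=\frac{D\press}{D\ww}\,\tnn\T$, and applying this to $\vvp$ gives $\frac{D\press}{D\ww}\,(\tnn\cdot\vvp)$, which is exactly the asserted right-hand side. No computation beyond reading off the factored form is required here.

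For \eqref{lem1.2} the one substantive point is the identity $\frac{D\press}{D\ww}\cdot\ww=\press$. I would obtain it either by direct substitution of \eqref{eul:derP} together with $\ww=(\rho,\rho v_1,\rho v_2,E)\T$, whereupon the contraction collapses to $(\gamma-1)(E-\tfrac12\rho|\vv|^2)=\press$, or, more conceptually, by noting that $\press=(\gamma-1)\big(w_4-\tfrac{1}{2w_1}(w_2^2+w_3^2)\big)$ is positively homogeneous of degree one in $\ww$, so Euler's identity for homogeneous functions yields the same conclusion at once. Multiplying through by $\tnn$ and using $\matP_W\ww=\tnn\,\big(\tfrac{D\press}{D\ww}\cdot\ww\big)$ then gives $\matP_W\ww=\press\,\tnn=\ppn$.

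Finally, \eqref{lem1.3} is a corollary of \eqref{lem1.2}. Substituting $\vvp=\ww$ into the definition \eqref{eul:DJ} of $\JP$, I would replace $\matP_W(\ww,\nn)\ww$ by $\ppn$ via \eqref{lem1.2}, obtaining $\JP[\ww](\ww)=\intS{\gomW}{\tvtheta\T\ppn}=\intS{\gomW}{\ppn\cdot\tvtheta}$, which is precisely $J(\ww)$ by \eqref{eul:cDL}. The only step carrying any real content is the pressure identity underlying \eqref{lem1.2}; the rest is bookkeeping with the outer-product structure and the definitions of $J$ and $\JP$, so I expect no genuine obstacle beyond verifying that contraction.
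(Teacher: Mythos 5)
Your proof is correct and follows essentially the same route as the paper's: identity \eqref{lem1.1} is read off from the transposed outer-product form of $\matP_W$, identity \eqref{lem1.2} reduces to the contraction $\frac{D\press(\ww)}{D\ww}\cdot\ww=\press$ (which the paper verifies by exactly the direct substitution you describe, computing $r(\ww)=E-\tfrac12\rho|\vv|^2=\press/(\gamma-1)$), and \eqref{lem1.3} follows by substituting $\vvp=\ww$ into \eqref{eul:DJ}. Your additional observation that the pressure contraction is an instance of Euler's identity for degree-one homogeneous functions is a pleasant conceptual shortcut not present in the paper, but it does not change the structure of the argument.
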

\begin{proof}
  The statement \eqref{lem1.1}
  follows directly from the definition of matrix multiplication if we realize that $\matP\T_W(\ww,\nn) = \frac{D \press(\ww)}{D \ww} \otimes (0,n_1,n_2, 0)\T $. 
  \par
  In order to prove  \eqref{lem1.2}, we can write
  \begin{align*}
    \matP_W(\ww,\nn) \ww = (\gamma - 1 ) r(\ww) (0,n_1,n_2, 0)\T,
  \end{align*}
  where 
 \begin{align*}
  r(\ww) &= \tfrac12 | \vv |^2 w_1 - v_1 w_2 - v_2 w_3 + w_4 \\ \nonumber 
    &= \tfrac12 |\vv |^2 \rho - v_1^2 \rho - v_2^2\rho + E 
    = E - \tfrac12\rho|\vv |^2
    ={\press(\ww)}/{ (\gamma - 1)}.    
 \end{align*}
 The statement \eqref{lem1.3} follows from the definition \eqref{eul:cDL}
 of the target functional $J$, 
 its derivative \eqref{eul:DJ} and  \eqref{lem1.2}.
 \qed
\end{proof}

\begin{lemma} 
\label{eul:lemmaP2}
 Let $\wt, \w \in \VV$ satisfy the impermeability condition \eqref{eul:BCw}, 
 then
 \begin{subequations}
   \label{eq:PPw}
   \begin{align}
     \label{eq:PPw1}
     & \matP(\wt,\nn) \w  = \matP_W(\wt,\nn) \w \quad \text{ on } \gomW, \\
     & \label{eq:PPw2}
     \PPP(\w,\nn) \cdot \tvtheta = \press \nn\cdot \vtheta\quad \text{ on } \gomW,
   \end{align}
 \end{subequations}
 where $\vtheta = (\vartheta_1,\vartheta_2)\T$, 
 $\tvtheta = (0,\vartheta_1,\vartheta_2, 0)\T$, $\vartheta_1,\vartheta_2\in\R$.
\end{lemma}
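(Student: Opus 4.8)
The plan is to verify the two identities by direct computation, exploiting the two impermeability constraints at different stages: the condition $\tilde\vv\cdot\nn=0$ coming from $\wt$, and the condition $w_2 n_1 + w_3 n_2 = 0$ coming from $\w$ (equivalently $\vv\cdot\nn=0$, since $\rho>0$). I would dispatch \eqref{eq:PPw2} first, as it is immediate. Pairing the explicit flux \eqref{eul:Pf} with $\tvtheta=(0,\vartheta_1,\vartheta_2,0)\T$ leaves only the two momentum components,
\[
\PPP(\w,\nn)\cdot\tvtheta = \vartheta_1\bigl(\rho v_1\,\vv\cdot\nn + \press\, n_1\bigr) + \vartheta_2\bigl(\rho v_2\,\vv\cdot\nn + \press\, n_2\bigr).
\]
On $\gomW$ the convective terms all carry the factor $\vv\cdot\nn=0$, so only $\press(\vartheta_1 n_1+\vartheta_2 n_2)=\press\,\nn\cdot\vtheta$ survives, which is \eqref{eq:PPw2}.

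For \eqref{eq:PPw1} I would first assemble the Euler flux Jacobian $\matP(\wt,\nn)=\sum_{s=1}^2\A_s(\wt)n_s$ explicitly by differentiating \eqref{eul:Pf}, using \eqref{eul:derP} for the pressure derivatives, and then impose $\tilde\vv\cdot\nn=0$. This collapses $\matP(\wt,\nn)$ substantially: its first and fourth rows reduce to $(0,n_1,n_2,0)$ and $(0,\tilde H n_1,\tilde H n_2,0)$ respectively, with $\tilde H=(\tilde E+\tilde{\press})/\tilde\rho$ the total enthalpy at $\wt$, whereas the corresponding rows of $\matP_W(\wt,\nn)$ are identically zero. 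In the two momentum rows, the entries multiplying $w_1$ and $w_4$ already coincide with those of $\matP_W(\wt,\nn)$, while the entries multiplying $w_2$ and $w_3$ differ from those of $\matP_W(\wt,\nn)$ only by terms proportional to $n_1$ and $n_2$.

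The final step contracts the difference with $\w$. By the structure just described, each component of $\bigl(\matP(\wt,\nn)-\matP_W(\wt,\nn)\bigr)\w$ turns out to be a scalar factor, respectively $1,\ \tilde v_1,\ \tilde v_2,\ \tilde H$, multiplying the single combination $w_2 n_1 + w_3 n_2$, which vanishes by the impermeability of $\w$. Hence $\matP(\wt,\nn)\w=\matP_W(\wt,\nn)\w$ on $\gomW$, establishing \eqref{eq:PPw1}.

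The only genuinely laborious step is assembling the full $4\times4$ Jacobian correctly, especially the energy row where the enthalpy enters; everything else is bookkeeping. The conceptual point worth emphasizing is that the two impermeability conditions play complementary and distinct roles: $\tilde\vv\cdot\nn=0$ simplifies the matrix $\matP(\wt,\nn)$, while $w_2 n_1 + w_3 n_2=0$ annihilates the residual discrepancies after contraction with $\w$, so both hypotheses $\wt,\w\in\VV$ satisfying \eqref{eul:BCw} are truly needed.
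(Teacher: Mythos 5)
Your proposal is correct, and both identities check out: after imposing $\tilde\vv\cdot\nn=0$ the difference $\matP(\wt,\nn)-\matP_W(\wt,\nn)$ indeed collapses to the rank-one matrix $(1,\tilde v_1,\tilde v_2,\tilde H)\T\otimes(0,n_1,n_2,0)\T$, so contracting with $\w$ produces the factor $w_2n_1+w_3n_2=0$ in every component exactly as you describe. The route differs from the paper's in two ways. First, for \eqref{eq:PPw1} the paper gives no computation at all and simply cites a direct verification in the reference \cite[Section~8.3.1.1]{DGM-book}; you actually supply that computation, which is the only nontrivial content of the lemma. Second, for \eqref{eq:PPw2} the paper does not argue from the explicit flux: it chains $\PPP(\w,\nn)\cdot\tvtheta=\matP(\w,\nn)\w\cdot\tvtheta=\matP_W(\w,\nn)\w\cdot\tvtheta=\ppn\cdot\tvtheta=\press\,\nn\cdot\vtheta$ using \eqref{PPP}, the just-proved \eqref{eq:PPw1}, and \eqref{lem1.2} from Lemma~\ref{eul:lemmaP}. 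Your direct argument from \eqref{eul:Pf} is more elementary and, as a bonus, shows that \eqref{eq:PPw2} needs only the impermeability of $\w$ and is logically independent of \eqref{eq:PPw1}; the paper's version buys brevity by reusing identities already on record. One cosmetic point: the first row of $\matP(\wt,\nn)$ is $(0,n_1,n_2,0)$ unconditionally (the mass flux is linear in the state), so no reduction via $\tilde\vv\cdot\nn=0$ is needed there — the hypothesis on $\wt$ is genuinely used only in rows $2$--$4$.
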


\begin{proof}
  Equality \eqref{eq:PPw1} can be derived by a direct computation,
  cf. \cite[Section~8.3.1.1]{DGM-book}.
  Furthermore, \eqref{eq:PPw2} follows from identities
  \eqref{PPP}, \eqref{eq:PPw1}, \eqref{lem1.2} and \eqref{ppn} as
  \begin{align*}
    \PPP(\w,\nn) \cdot \tvtheta
    = \matP(\w,\nn)\w\cdot\tvtheta
    = \matP_W(\w,\nn)\w\cdot\tvtheta
    = \ppn \cdot\tvtheta = \press \nn\cdot \vtheta. \hspace{10mm} \qed
  \end{align*}
\end{proof}

\subsection{Formulation of the continuous adjoint problem} 

The continuous adjoint problem to \eqref{eul:EE} was derived, e.g., in \cite{Hartmann2007Adjoint}
but without the treatment of the inlet/outlet boundary conditions. 
For completeness, we briefly derive the weak as well as strong variant
of the adjoint problem following
the approach from \cite{Giles1997Adjoint} and \cite{Hartmann2007Adjoint}.
We multiply \eqref{eul:EE} by $\zz \in \WWh$, 
integrate by parts and finally differentiate at $\ww$ which leads to 
\begin{align}
  \label{eq:DW1}
% \intx{\Om}{}{ \sum_{s=1}^2 \frac{\partial  \ff'_s [\ww](\vvp) }{\partial x_s} \cdot \zz }{x}  
  \intS{\dom}{ \sum_{s=1}^2 n_s \ff'_s [\ww](\vvp) \cdot \zz }  
 & - \intx{\Om}{}{ \sum_{s=1}^2 \ff'_s [\ww](\vvp) \cdot  \frac{\partial \zz}{ \partial x_s} }{x} 
 = 0 \quad  
 \forall \vvp \in \Vp. 
\end{align}
Here $\ff'_s [\ww](\vvp)$ denotes the Fr{\' e}chet derivative  of $\ff_s$ at $\ww\in\defD$
along the direction $\vvp\in\Vp$, where $\Vp$ is
the subspace of $\VV$ (cf. \eqref{spaces0}) whose functions satisfy
the boundary conditions \eqref{eul:BCw}--\eqref{eul:BCio}, 
see \cite{Lu2005posteriori} for details. Namely we put
%is defined by
% \begin{align} 
% \label{eul:Vp}
% \Vp := \{ \vvp \in \WWh, \, \BC'[\ww](\vvp) = 0 \}.
% \end{align}
 \begin{align} 
\label{eul:Vp}
\Vp := \{ \vvp \in \VV: \, \vp_2 n_1+\vp_3 n_2  = 0 \text{ on } \gomW,\ 
  \matP\Minus(\w,\nn)\vvp = 0 \text{ on } \gomIO\}.
\end{align} 
%with $\VV$ given by \eqref{spaces0}.

%%  \begin{remark} 
%% Our definition follows a more general approach from \cite{Lu2005posteriori}, where 
%% $$\Vp := \{ \vvp \in \V, \, \BC'[\ww](\vvp) = 0 \}$$
%% with a small modification 
%% that we only approximate $\BC'[\ww](\vvp)$ on $\gomIO$ 
%% by its linearization $\matP\Minus(\w,\nn)\vvp.$ 
%% % That helps us to further obtain a 
%% \end{remark}

The Fr{\' e}chet derivative of $\ff_s$ satisfies 
$\ff'_s [\ww](\vvp) = \frac{\D\ff_s(\ww)}{\D\ww} \vvp =\matA_s(\ww) \vvp$, $s=1,2$.
Due to \eqref{eul:A_s} and \eqref{eul:PP} we have
\begin{align}
\intS{\dom}{ \sum_{s=1}^2 n_s \ff'_s [\ww](\vvp) \cdot \zz }  = \intS{\dom}{ \zz\T \matP(\ww,\nn) \vvp  }
\end{align}
% Further, employing \eqref{eul:A_s},  \eqref{eul:derP} 
and hence, in virtue of \eqref{mod:dualh} and \eqref{eq:DW1}, we define the variational
formulation of the continuous adjoint problem.
\begin{definition}
 Function $\zz \in \WWh$ is the weak solution of the adjoint problem to \eqref{eul:EE} 
  if it satisfies 
%Find $\zz \in \WWh$ such that 
\begin{align} \label{eul:weakContDual}
   \intS{\dom}{ \zz \T \matP(\ww,\nn) \vvp    } %\nonumber \\ 
 - \intx{\Om}{}{ \sum_{s=1}^2   \frac{\partial \zz\T}{ \partial x_s}\matA_s(\ww) \vvp  }{x} 
 = \JP[\ww](\vvp)  \quad \forall \vvp \in \Vp. 
\end{align}
\end{definition}

Further, we rearrange this expression individually on $\gomW$ and $\gomIO$.
Since $\vvp\in\Vp$, we have $\matP\Minus(\ww,\nn) \vvp = 0$ on $\gomIO$.
Then due to \eqref{eul:PP} it holds
\begin{align}
  \label{bc:A}
 %\intS{\gomIO}{ \sum_{s=1}^2 n_s \ff'_s [\ww](\vvp) \cdot \zz } 
  \intS{\gomIO}{ \zz\T \matP(\ww,\nn) \vvp } 
  = \intS{\gomIO}{ \zz\T \matP\Plus(\ww,\nn) \vvp}.
\end{align}
Similarly, $\vvp\in\Vp$ implies that 
$n_1 \vp_2 + n_2 \vp_3 = 0$ on $\gomW$.
Functions $\ww$ and $\vvp$ satisfy the assumption of Lemma \ref{eul:lemmaP2}, 
hence we may use \eqref{eq:PPw1} which leads to
\begin{align} 
  \label{bc:B}
%  \intS{\Gamma_{W}}{ \sum_{s=1}^2 n_s \ff'_s [\ww](\vvp) \cdot \zz }  =
 \intS{\Gamma_{W}}{ \zz\T \matP(\ww,\nn) \vvp  } = \intS{\Gamma_{W}}{ \zz\T \matP_W(\ww,\nn) \vvp  }.
\end{align}
Inserting \eqref{bc:A} and \eqref{bc:B} in \eqref{eul:weakContDual} and 
employing \eqref{eul:DJ},
we have
\begin{align} \label{eul:weak2}
  \intS{\gomIO}{ \zz\T \matP\Plus(\ww,\nn) \vvp}
  & +\intS{\Gamma_{W}}{ \zz\T \matP_W(\ww,\nn) \vvp  }
 - \intx{\Om}{}{ \sum_{s=1}^2   \frac{\partial \zz\T}{ \partial x_s}\matA_s(\ww) \vvp  }{x} \notag \\
  & = \intS{\gomW}{\tvtheta \T\, \matP_W(\ww,\nn) \vvp }  \qquad \forall \vvp \in \Vp. 
\end{align}
Using \eqref{lem1.1}, we arrange the integrands of integrals over $\gomW$ in
\eqref{eul:weak2} by
$$\matP\T_W(\ww,\nn) \zz = \frac{D \press(\ww)}{D \ww} (0,n_1, n_2,0) \cdot \zz, \qquad
 \matP\T_W(\ww,\nn)\tvtheta = \frac{D \press(\ww)}{D \ww} \nn \cdot \vtheta.$$ 
Then the equality of the both integrals over $\gomW$  leads to  a boundary condition 
\begin{align} 
 n_1 \z_2 + n_2 \z_3 = \nn \cdot \vtheta \qquad \text{on } \gomW.
\end{align}

Therefore, we can write the equation \eqref{eul:weakContDual} in the strong form. 
\begin{definition}
  \label{def:z}
  We say that function $\zz\in C^1(\oOm)$ is the solution of the adjoint problem
  to \eqref{eul:EE} for the given $\ww\in\VV$ if it satisfies
 \begin{align} 
 \label{eul:contDualStrong}
%  - \sum_{s=1}^2 (\ff'_s(\ww))\T \frac{\partial \zz}{ \partial x_s} = 0 \text{ in } \Om, \quad  \sum_{s=1}^2 ( \bkn_s \ff'_s(\ww))\T \zz = j'[\ww] \text{ on } \dom
- \sum_{s=1}^2 (\matA_s(\ww))\T \frac{\partial \zz}{ \partial x_s} = 0 &\text{ in } \Om, 
\end{align} 
with the boundary conditions
\begin{align}
 \label{eul:contDualStrongBC}
\big(\matP\Plus(\ww,\nn) \big)\T \zz = 0 \text{ on } \gomIO, \qquad
n_1 \z_2 + n_2 \z_3 = \nn \cdot \vtheta \text{ on } \gomW.
\end{align}
\end{definition}
Obviously, if $\zz$ is the solution in the sense of Definition~\ref{def:z}, then it fulfils
the weak form of the adjoint problem \eqref{eul:weakContDual}.

% \begin{remark} 
%  In the case of compressible Euler equations the forms defined generally in \eqref{eq:strongDual} 
%  read
%  \begin{align*} 
%   (\cA'[\ww])^* \z & = - \sum_{s=1}^2 (\matA_s(\ww))\T \frac{\partial \zz}{ \partial x_s} \\ 
%   (\cB'[\ww])^* \z & = \matP\T(\ww,\nn) \zz \quad \text{ on } \dom \backslash \gomW, \\ 
%   (\cB'[\ww])^* \z & =  n_1 \z_2 + n_2 \z_3 \quad \text{ on } \gomW, 
%  \end{align*}
% \end{remark}

\section{Discontinuous Galerkin discretization}
\label{sec:eulDG}

In this section, we recall the discontinuous Galerkin (DG) discretization of
the Euler equations \eqref{eul:EE}. Let $\Th=\{K\}$ be a mesh covering $\oOm$
consisting of non-overlapping elements $K\in\Th$. We introduce the
{\em broken Sobolev space} of vector-valued functions
\begin{align}
  \label{bss}
  \bbHj := \{\va\in[L^2(\Om)]^4;\ \va|_K\in [H^1(K)]^4\ \forall K\in\Th\}.
\end{align}
Moreover, we define the interior and exterior traces of $\w\in\bbHj$ on $\dK$,
 $K\in\Th$ by symbols $\w_{\dK}^{(+)}$ and $\w_{\dK}^{(-)}$, respectively,
where 
\begin{align}
  \label{traces}
  \w_{\dK}^{(\pm)}(x) = \lim_{\varepsilon\to 0^+} \w(x \mp \varepsilon \nn),
  \quad
  x\in\dK
\end{align}
and $\nn$ denotes the unit outer normal to $\dK$. Further, we denote
the mean value and the jump of $\w\in\bbHj$ on $\dK\not\subset\gom$ by
\begin{align}
  \label{jump}
  \aver{\w}_{\dK} = (\w_{\dK}^{(+)} + \w_{\dK}^{(-)})/2\qquad \mbox{and}\qquad
  \jump{\w}_{\dK} = \w_{\dK}^{(+)} - \w_{\dK}^{(-)},
\end{align}
respectively. For $\dK\subset\gom$, we put $\aver{\w}_{\dK} = \w_{\dK}^{(+)}$.
In the following, we omit the subscript $_{\dK}$ for simplicity and write
$\aver{\cdot}$ and $\jump{\cdot}$ only.

Finally, similarly as \eqref{defD}--\eqref{spaces0},
we define the space of piece-wise regular admissible
physical state vectors
\begin{align}
  \label{VVh}
  \VVh:=\{\w\in \bbHj;\ \w(x)\in\defD\ \mbox{a.e. } x\in\Om\}.
\end{align}

\subsection{Approximate solution}
The DG approximate solution of \eqref{eul:EE} is sought in
a finite-dimensional subspace of $\bbHj$ which consists of piecewise polynomial
functions. 
%We note that the polynomial degrees may differ among mesh elements 
%and we recall the notation used in previous chapters. 
We denote the local polynomial degree $p_K \in \matN$ for each $K \in \mesh$ and 
we introduce $\pp := \{ p_K ; K \in \mesh \}.$
Over the triangulation $\mesh$ we define the space
of vector-valued discontinuous piecewise polynomial functions
\begin{align}
  \label{eul:Shp}
  \bShp & =  \{\va_h \in [L^2(\Omega)]^4; \vp_h\vert_{K}\in [P^{p_K}(K)]^4\ \Forall K \in \mesh\}, \\
  \bShpp & =  \{\va_h \in [L^2(\Omega)]^4; \vp_h\vert_{K}\in [P^{p_K+1}(K)]^4\ \Forall K \in \mesh\}, 
  \notag
\end{align}
 
Multiplying \eqref{eul:EE} by $\vvp \in \bbHj,$ integrating over $\Om$ 
and applying the Green theorem separately on each element $K\in \mesh$  and using
\eqref{eul:A_s}--\eqref{eul:Pf},
we get
% \begin{align}\label{eul:eq2}
%   -  \sum_{K\in\mesh} \int_K  \sum_{s=1}^{2} \ff_s(\w) \cdot \frac{\partial\va}{\partial x_s} \dx 
%   +\sum_{\Gamma\in\FhI} \int_{\Gamma} \sum_{s=1}^{2} \ff_s(\w) n_{\Gamma,s} \cdot  \jump{\va}\dS 
%   + \sum_{\Gamma\in\FhB} \int_{\Gamma} \sum_{s=1}^{2} \ff_s(\w) n_{\Gamma,s}\cdot \va \dS 
%   = 0. 
% \end{align}
\begin{align}\label{eul:eq2}
  -  \sum_{K\in\mesh} \int_K  \sum_{s=1}^{2} \matA(\w)\w \cdot \frac{\partial\va}{\partial x_s} \dx 
  &+\sum_{K\in\mesh} \int_{\partial K} \PPP(\ww,\nn) \cdot  \va\dS 
%   \\ \nonumber
%   &+ \sum_{K\in\mesh} \int_{\partial K \cap \dom} \PPP(\ww,\nn) \cdot \va \dS 
  = 0.
\end{align}
The boundary integrals are approximated by
a {\em numerical flux} 
$\HH: \defD\times \defD \times \Br \to \IR^{4}$ 
\begin{align} \label{eul:nFluxIn}
\intS{\dK}{ \PPP(\ww,\nn) \cdot \va} 
 \approx
\intS{\dK}{\HH(\ww\plus, \w\minus,\nn)\cdot\va }.
\end{align}
We assume that the numerical flux is locally Lipschitz continuous and 
\begin{align}
  \label{H:cons}
  &\mbox{ {consistent}:}\quad   \HH(\w,\w,\nn) = \PPP(\ww,\nn), \quad \w\in\defD,\ \nn\in \Br, \\
  &\mbox{{conservative}:}\quad
  \HH(\w_1,\w_2,\nn) = -\HH(\w_2,\w_1,-\nn), \ \  \w_1, \w_2 \in \defD,\ \nn\in \Br. \notag
\end{align}

% We distinguish between inner edges ($\Gamma \in \dKI$), where we set 
% \begin{align} \label{eul:nFluxIn}
% \intS{\dKI}{ \PPP(\ww,\nn) \cdot \va} 
%  \approx
% \intS{\dKI}{\HH(\ww\plus, \w\minus,\nn)\cdot\va },
% \end{align}
% and boundary edges ($\Gamma \in \dKB$), where we put 
% \begin{align} \label{eul:nFluxBound}
% \intS{\dKB}{ \PPP(\ww,\nn) \cdot \va} 
%  \approx
% \intS{\dKB}{\Hbound(\ww\plus,\ww\minus,\nn)\cdot\va },
% \end{align}
% The meaning of $\uBC$ will be specified in Section ??
% where $\Gamma$ denotes an edge of some mesh element $K$.

%% The numerical flux has to satisfy some basic conditions:
%% \begin{itemize}
%%   \item  {\em continuity}: $\HH(\w_1,\w_2,\nn)$ is locally Lipschitz-continuous with respect to the variables $\w_1$ and $\w_2$,
%%   \item {\em consistency}: $ \HH(\w,\w,\nn) = \PPP(\ww,\nn), \, \,$  $\w\in\defD,\ \nn=(n_1,\dots,n_d)\in \Br, $
%%   \item {\em conservativity}: $ \HH(\w_1,\w_2,\nn) = -\HH(\w_2,\w_1,-\nn), \quad  \w_1, \w_2 \in \defD,\ \nn\in \Br. $
%% \end{itemize}

\begin{definition}
We say that a function $\w_h \in \bShp$ is 
the {\em approximate DG solution} of the Euler equations \eqref{eul:EE}, if 
\begin{align}
  \label{eul:discPrimal} 
  \bbh(\wh,\va_h) = 0 \qquad \forall \va_h \in \bShp,
\end{align}
where
\begin{align}
  \label{eul:form2}
  \bbh(\wh,\va) & := 
  - \sum_{K\in\mesh}\intx{K}{}{\sum_{s=1}^2 (\matA_s(\wh)\wh) \cdot \frac{\partial \va}{\partial x_s}}{x} \\
  & +\sum_{K \in \mesh} \intS{\partial K }{\HH(\wh\plus, \wh\minus,\nn)\cdot \va}\qquad 
  \wh\in\VVh, \va\in\bbHj. \notag
\end{align}
\end{definition}

We employ the {\em Vijayasundaram numerical flux} \cite{Vijaya}
since it can be easily linearized 
and applied to the setting of the discrete adjoint problem.
Then the numerical flux on $\dK$ is given by
\begin{align}
 \label{eul:VS}
 \fluxVS(\wh\plus,\wh\minus,\nn)=\matP^+\left(\aver{\wh},\nn\right)\wh\plus +
 \matP^-\left(\aver{\wh},\nn\right)\wh\minus,
\end{align}
where $\matP^\pm$ are the positive and negative parts of $\matP$ given by \eqref{Ppm}
and $\aver{\cdot}$ is the means value given by \eqref{jump}.
The numerical flux through the boundary $\gom$ is treated in the next section.
The following manipulations can be simply adopt to, e.g., the Lax-Friedrichs numerical flux.

\subsection{Boundary conditions} % on impermeable walls}

We discuss two ways of the realization of the impermeable boundary condition \eqref{eul:BCw}
written as $\vv\cdot\nn =0$ on $\gomW$.
The first one is based on the direct use of the impermeability condition
in the physical flux $ \PPP(\w,\nn)$ (\cite{Hartmann2007Adjoint})
and the second applies the so-called {\em mirror operator} to the state $\w$
(e.g., \cite{bas-reb-JCP,harthou2}).
Further, we recall the treatment of the  input/output boundary conditions.
Later we show that all treatments enable deriving the
primal as well as adjoint consistent discretizations.

\subsubsection{Impermeability condition using the boundary value operator}
\label{sec:W1}
We introduce the \emph{boundary value operator}, see \cite{Hartmann2015Generalized},
$\uBC:\defD\to\defD$ on $\gomW$ such that
$\uBC(\wh)$ fulfils  the boundary condition \eqref{eul:BCw}, % for any $\wh\in\bShp$, 
i.e., $\BC(\uBC(\wh)) = 0$, 
and further $\uBC(\ww) = \ww$ for any $\ww$ satisfying $\BC(\ww) = 0$.
Hence, we define $\uBC$ by
\begin{align} 
\label{eul:bvW}
%  \uu_{\gomW}(\w) 
  \uBC (\w):= \matU_{\Gamma} \w =
  \setlength\arraycolsep{5pt}
  \begin{pmatrix} 
  1 & 0 & 0 & 0 \\ 
  0 & 1 - n_1^2 & - n_1 n_2 & 0 \\ 
  0 & - n_1 n_2 & 1 - n_2^2 & 0 \\ 
  0 & 0 & 0 & 1 
 \end{pmatrix} 
 \w \qquad \text{ on } \gomW,
\end{align}
where $(n_1,n_2)$ are components of the unit outer normal $\nn$ to $\gomW$.
% For further usage we denote the matrix from \eqref{eul:bvW} by $\matU.$
Such choice originates in the substracting of the normal component of the velocity,
i.e., $\vv$ is replaced by $\vv - (\nn \cdot \vv) \nn$.
That also obviously guarantees meeting the boundary condition \eqref{eul:BCw}.

Then since $\uBC(\wh) \cdot \nn = 0,$ we  define %$\wh\minus = \uBC(\wh\plus).$
the numerical flux on $\gomW$ by
\begin{align}
  \label{eul:BCw:boundaryOperator}
  \HboundWone( \w\plus, \nn) % \w\minus,\nn) 
  := \sum_{s=1}^{2} \ff_s(\uBC( \w\plus)) n_s. % \quad \text{ on } \gomW.
\end{align}
Moreover, due to \eqref{eul:Pf}, \eqref{ppn} and \eqref{lem1.2},
we obtain
\begin{align}
\label{eul:BCw:boundaryOperator2}
\HboundWone( \w\plus, \nn) % \w\minus,\nn) 
  & = \ppn(\uBC( \w\plus)) 
= \matP_W(\uBC( \w\plus), \nn) \uBC( \w\plus).
\end{align}

\subsubsection{Impermeability condition using the mirror operator}
\label{sec:W2}

We define the {\em mirror operator}
$\Mir:\defD\to\defD$
such that the density, energy and the tangential component of velocity
of $\Mir(\w)$ are the same as of $\w$ and the normal component of velocity has the
opposite sign. Then $\vv$ is replaced by $\vv - 2(\nn \cdot \vv) \nn$ and we put 
\begin{align} 
  \label{Mir}
  \Mir(\w) =  \matM_\Gamma \w :=
  \setlength\arraycolsep{5pt}
  \begin{pmatrix}
    1 & 0 & 0 & 0 \\ 
    0 & 1 - 2 n_1^2 & -2 n_1 n_2 & 0 \\ 
    0 & - 2 n_1 n_2 &1 - 2 n_2^2 & 0 \\ 
    0 & 0 & 0 & 1 
  \end{pmatrix} \w,
\end{align}
where $(n_1,n_2)$ are components of the unit outer normal $\nn$ to $\gomW$.
Obviously, \eqref{eul:bvW} and \eqref{Mir} imply $\Mir(\w) = 2 \uBC(\w) - \w$.

% In his article \cite{Hartmann2006Derivation} Hartmann uses instead 
% \begin{align}\label{hartmann:uGamma}
% \vv_{\Gamma}(\vv)= \vv -(\vv\cdot\nn)\nn.
% \end{align}

Now we set the numerical flux $\HboundWtwo$ on $\gomW$ as
the Vijayasundaram numerical flux \eqref{eul:VS} with $\w\minus= \Mir(\w\plus),$ i.e., 
\begin{align} \label{eul:mirBC}
  \HboundWtwo(\w\plus,\nn) %\w\minus, \nn)
  := \fluxVS(\w\plus, \Mir(\w\plus), \nn),
  %\quad \text{ on } \gomW,
\end{align}
where the expression $\aver{\w}$ which appears in the definition of $\fluxVS$ is defined as 
$\aver{\w} = \tfrac12(\w\plus + \Mir(\w \plus)) = \uBC(\w\plus)$ on $\gomW.$
% and, if $\Gamma\in\FhW$, then in \eqref{eul:eq:form2} we have
% $ %\label{eul:eq:mirBC1}
%     \HH(\w_{\Gamma}\plus, \w_{\Gamma}\minus,\bkn_\Gamma) =   \HboundWtwo(\w_{\Gamma}\plus, \nn).
% $
    %\end{align}
    
\begin{remark} 
  %For comparison between $\HboundWone$ and $\HboundWtwo$
  Using \eqref{eul:BCw:boundaryOperator2}, \eqref{eq:PPw1},
  identity $\matM_{\Gamma} \matU_{\Gamma} = \matU_{\Gamma}$, \eqref{eul:bvW} and \eqref{Mir},
  we have
  \begin{align} 
    \HboundWone(\w\plus, \nn) &= \matP_W(\uBC( \w\plus)) \matU_{\Gamma} \w \plus  
    = \matP(\uBC( \w\plus)) \matU_{\Gamma} \w \plus \nonumber \\
    &= (\matP\Plus(\uBC( \w\plus)) + \matP\Minus(\uBC( \w\plus)) \matM_{\Gamma}) \matU_{\Gamma} \w \plus \nonumber \\ 
    %&=  \fluxVS(\w\plus, \Mir(\w\plus), \nn) \matU_{\Gamma} \w \plus  \\
    %& \Vit{\mbox{Nema byt:}} \notag \\
    & =  \fluxVS( \matU_{\Gamma} \w\plus, \Mir( \matU_{\Gamma}\w\plus), \nn). % \notag
  \end{align}
  %since $\matM_{\Gamma} \matU_{\Gamma} = \matU_{\Gamma}.$
 
\end{remark}

\begin{lemma}
  Boundary numerical fluxes $\HboundWone$ and $\HboundWtwo$ given by
  \eqref{eul:BCw:boundaryOperator} and \eqref{eul:mirBC}, respectively,
  are consistent with the boundary condition \eqref{eul:BCw},
  i.e.,
  \begin{align}
    \label{HB:cons}
    \mbox{if } \w\in\defD \mbox{ satisfy } \eqref{eul:BCw} \ \Rightarrow\ 
    \HboundWi( \w\plus, \nn)= \PPP(\ww,\nn) \mbox{ on }\gomW, 
  \end{align}
  where $i=1,2$ and $\PPP$
  is the physical flux \eqref{eul:Pf}.
\end{lemma}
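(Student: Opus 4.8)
The plan is to treat the two cases $i=1$ and $i=2$ separately, in each case exploiting the defining property of the relevant boundary operator under the impermeability condition $\vv\cdot\nn=0$. For $i=1$, I would start from the definition \eqref{eul:BCw:boundaryOperator} and observe, via \eqref{eul:Pf}, that $\HboundWone(\w\plus,\nn)=\PPP(\uBC(\w\plus),\nn)$. Since $\w$ satisfies \eqref{eul:BCw}, the boundary value operator acts as the identity, $\uBC(\w\plus)=\w\plus$, which is precisely the property demanded of $\uBC$ in Section~\ref{sec:W1}. Hence $\HboundWone(\w\plus,\nn)=\PPP(\w,\nn)$, proving the claim in this case.

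For $i=2$, I would unfold the definition \eqref{eul:mirBC} together with the Vijayasundaram flux \eqref{eul:VS}, using that on $\gomW$ the mean value is $\aver{\w}=\uBC(\w\plus)$. This yields
\begin{align*}
  \HboundWtwo(\w\plus,\nn)
  = \matP^+(\uBC(\w\plus),\nn)\,\w\plus
  + \matP^-(\uBC(\w\plus),\nn)\,\Mir(\w\plus).
\end{align*}
The crucial observation is that when $\vv\cdot\nn=0$ both operators reduce to the identity on $\w$: the mirror operator replaces $\vv$ by $\vv-2(\nn\cdot\vv)\nn=\vv$ and the boundary value operator replaces $\vv$ by $\vv-(\nn\cdot\vv)\nn=\vv$, while leaving density and energy untouched, so that $\Mir(\w\plus)=\w\plus$ and $\uBC(\w\plus)=\w\plus$. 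Substituting these identities and recombining $\matP=\matP^++\matP^-$ collapses the right-hand side to $\matP(\w\plus,\nn)\,\w\plus$, and the identity \eqref{PPP} then gives $\matP(\w,\nn)\,\w=\PPP(\w,\nn)$, which finishes this case.

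The argument is a short chain of substitutions, so there is no genuine obstacle; the only point requiring explicit verification is that $\Mir(\w)=\w$ and $\uBC(\w)=\w$ hold under the impermeability condition, i.e., that annihilating the normal velocity component leaves a state with already vanishing normal velocity unchanged. This is immediate from the matrix descriptions \eqref{eul:bvW} and \eqref{Mir}, but it is worth emphasizing since the whole consistency hinges on it. I would also stress that $\matP^\pm$ are evaluated at the averaged argument $\uBC(\w\plus)$, so it is exactly the reduction $\uBC(\w\plus)=\w\plus$ that permits the positive and negative parts to recombine into the full matrix $\matP(\w\plus,\nn)$.
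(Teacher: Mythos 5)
Your proof is correct and follows essentially the same route as the paper's: the paper's argument is precisely that $\uBC(\ww)=\ww$ and $\Mir(\ww)=\ww$ for states satisfying the impermeability condition, which you verify and then combine with \eqref{PPP} and $\matP=\matP^++\matP^-$ exactly as needed. You merely spell out the chain of substitutions that the paper leaves implicit.
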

\begin{proof}
  The consistency of $\HboundWi$, $i=1,2$ follows from the consistency of the
  boundary values operators $\uBC$ and $\Mir$ which means that
  $\uBC(\ww) = \ww$ and $\Mir(\ww) = \ww$ for $\ww$ satisfying \eqref{eul:BCw} on $\gomW$.
  \qed
\end{proof}

%\Vit{Hartmann in \cite{Hartmann2007Adjoint} says that $\Mir$ is not consistent, see end
%  of Section 5}

\subsubsection{Boundary conditions on the inlet/outlet}
We describe the realization of the inlet/outlet boundary condition \eqref{eul:BCio}.
For simplicity, we restrict to the flow around an isolated profile where
the state vector $\wBC = \ww_{\infty}$ stands for the free-stream free flow.
We define the vector $\wh\minus$ as the solution of the local linearized Riemann problem
with states $\wh\plus$ and $\wBC$, whose solution can be written
as
\begin{align}
  \label{LLRP}
  \uuRP(\wh\plus, \wBC) = \matP\Plus(\wh\plus,\nn) \wh\plus + \matP\Minus(\wh\plus,\nn) \wBC,
\end{align}
where $\nn$ is unit outer normal to $\gomIO$ and $\matP^{\pm}$ are given by \eqref{Ppm}.
We refer to, e.g., \cite{feikuc2007} or \cite[Chapter~8]{DGM-book}. % for details. 
Finally, we put $\wh\minus:=\uuRP(\wh\plus, \wBC)$ and 
\begin{align} 
\label{eul:Hio}
\HboundIO(\wh\plus, \nn) := \fluxVS(\wh\plus, \uuRP(\wh\plus, \w_{\rm BC}), \nn),
%\quad\text{ on }\gomIO,
\end{align}
where we set $\aver{\wh} := \wh\plus$ for any $\wh \in \bbHj$ on $\gomIO $ in \eqref{eul:VS}.

\begin{lemma}
  The numerical flux $\HboundIO$ given by \eqref{eul:Hio} is consistent with \eqref{eul:BCio}, i.e.,
   \begin{align}
    \label{HIO:cons}
    \mbox{if } \w\in\defD \mbox{ satisfy } \eqref{eul:BCio} \ \Rightarrow\ 
    \HboundIO( \w\plus, \nn)= \PPP(\ww,\nn) \mbox{ on }\gomIO. 
  \end{align} 
\end{lemma}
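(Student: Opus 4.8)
The plan is to reduce the claim to the consistency \eqref{H:cons} of the interior Vijayasundaram flux \eqref{eul:VS}, in the same spirit in which the consistency of $\HboundWi$ was obtained from the consistency of the operators $\uBC$ and $\Mir$. Fix $\w\in\defD$ satisfying the inlet/outlet condition \eqref{eul:BCio} and put $\w\plus=\w$. Because the convention $\aver{\wh}:=\wh\plus$ is used on $\gomIO$, all the matrices entering \eqref{eul:Hio} through \eqref{eul:VS} and \eqref{LLRP}---that is $\matP(\w,\nn)$ together with its parts $\matP\Plus(\w,\nn)$ and $\matP\Minus(\w,\nn)$---are evaluated at one and the same argument $(\w,\nn)$. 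They are therefore simultaneously diagonalized by the transform $\matT=\matT(\w,\nn)$ from \eqref{Ppm} and share a single eigenbasis; this common eigenbasis is what drives the cancellations below.

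First I would expand, using $\aver{\w}=\w\plus=\w$,
\begin{align*}
  \HboundIO(\w,\nn)=\fluxVS(\w,\uuRP(\w,\wBC),\nn)
  =\matP\Plus(\w,\nn)\,\w+\matP\Minus(\w,\nn)\,\uuRP(\w,\wBC).
\end{align*}
The heart of the argument is the evaluation of the second term. Inserting \eqref{LLRP} and reading everything in the eigenbasis of $\matP(\w,\nn)$, the outgoing (positive-characteristic) contribution of $\uuRP$ is annihilated by $\matP\Minus(\w,\nn)$---on each eigendirection the negative and positive spectral factors are never simultaneously nonzero---while the incoming (negative-characteristic) contribution survives unchanged, so that $\matP\Minus(\w,\nn)\,\uuRP(\w,\wBC)=\matP\Minus(\w,\nn)\,\wBC$. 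At this point I invoke the boundary condition \eqref{eul:BCio}, namely $\matP\Minus(\w,\nn)(\w-\wBC)=0$, which permits replacing $\wBC$ by $\w$ in that term. Collecting the two pieces and using $\matP=\matP\Plus+\matP\Minus$ together with \eqref{PPP} then gives
\begin{align*}
  \HboundIO(\w,\nn)=\matP\Plus(\w,\nn)\,\w+\matP\Minus(\w,\nn)\,\w
  =\matP(\w,\nn)\,\w=\PPP(\w,\nn)\qquad\text{on }\gomIO,
\end{align*}
which is exactly \eqref{HIO:cons}.

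The one genuinely delicate point is the middle step, i.e.\ the eigenstructure bookkeeping showing $\matP\Minus(\w,\nn)\uuRP(\w,\wBC)=\matP\Minus(\w,\nn)\wBC$. Here one must verify, componentwise in the basis supplied by $\matT(\w,\nn)$, that the cross term built from the positive part of $\uuRP$ is killed by $\matP\Minus$, and that $\matP\Minus$ does not alter the incoming part selected by the linearized Riemann solver \eqref{LLRP}; this is precisely where the fact that every matrix is taken at the single state $\w$ (so that they share $\matT$) is essential. The remainder---the algebraic identity $\matP=\matP\Plus+\matP\Minus$, the relation \eqref{PPP}, and the single use of the boundary condition \eqref{eul:BCio}---is routine.
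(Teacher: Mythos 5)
Your overall plan---expand $\HboundIO$ via \eqref{eul:VS}, kill the cross term $\matP\Minus\matP\Plus$, and invoke the boundary condition once---is close in spirit to what is needed, but the step you yourself single out as delicate is where the argument genuinely breaks. With $\matP^{\pm}=\matT^{-1}\matL^{\pm}\matT$ as in \eqref{Ppm}, the identity $\matP\Minus\matP\Plus=0$ is indeed correct, but the assertion that the incoming contribution of $\uuRP$ ``survives unchanged'' under $\matP\Minus$ amounts to $(\matP\Minus)^2\wBC=\matP\Minus\wBC$, i.e.\ $(\lambda_i^-)^2=\lambda_i^-$ on each eigendirection. Since $\lambda_i^-=\min(\lambda_i,0)\le 0$, this holds only where $\lambda_i^-=0$: the matrices $\matP^{\pm}$ are weighted by the eigenvalues themselves and are \emph{not} projectors. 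What your computation actually produces is $\matP\Minus(\w,\nn)\,\uuRP(\w,\wBC)=\bigl(\matP\Minus(\w,\nn)\bigr)^2\wBC$, not $\matP\Minus(\w,\nn)\,\wBC$, and the subsequent reassembly via $\matP\Plus+\matP\Minus=\matP$ no longer goes through.

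The paper argues in the opposite order and never forms a product of two signed parts: it inserts the boundary condition \eqref{eul:BCio}, i.e.\ $\matP\Minus(\w\plus,\nn)\w\plus=\matP\Minus(\w\plus,\nn)\wBC$, directly into the definition \eqref{LLRP}, which gives
\begin{align*}
\uuRP(\w\plus,\wBC)=\matP\Plus(\w\plus,\nn)\w\plus+\matP\Minus(\w\plus,\nn)\w\plus=\matP(\w\plus,\nn)\w\plus=\PPP(\w,\nn)
\end{align*}
by \eqref{PPP}, and concludes from there. So the correct move is to use \eqref{eul:BCio} \emph{before} applying $\matP\Minus$ to $\uuRP$, not after. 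Note also that your intended bookkeeping becomes literally true if the linearized Riemann state is built from the characteristic projectors $\matT^{-1}\diag(\mathbf{1}_{\lambda_i\ge 0})\matT$ and $\matT^{-1}\diag(\mathbf{1}_{\lambda_i<0})\matT$ (the standard construction in \cite{feikuc2007}); in that reading one even gets $\uuRP(\w\plus,\wBC)=\w\plus$ under \eqref{eul:BCio}, and the consistency \eqref{H:cons} of $\fluxVS$ finishes the proof in one line. With \eqref{LLRP} and \eqref{Ppm} read literally, that rescue is not available, which is precisely why your ``delicate point'' cannot be patched as stated.
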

\begin{proof}
  If $\w$ fulfils \eqref{eul:BCio} then
  $\matP\Minus(\w\plus,\nn)\w\plus =\matP\Minus(\w\plus,\nn) \wBC$
  and
  \begin{align*}
    \uuRP(\w\plus, \wBC) =  \matP\Plus(\w\plus,\nn) \w\plus + \matP\Minus(\w\plus,\nn) \w\plus
    = \matP(\w\plus,\nn) \w\plus,
  \end{align*}
  which together with \eqref{PPP} implies \eqref{HIO:cons}. \qed
\end{proof}

In the following the symbol $\Hbound$ stands either for $\HboundWi$, $i=1,2$ or
$\HboundIO$. In the context is clear which numerical flux is used.

\subsection{Primal consistency}

In this section, we prove the consistency of the DG discretization, i.e., if $\ww$ is the
smooth solution of \eqref{eul:EE} then it fulfils the identity
\eqref{eul:discPrimal} with \eqref{eul:form2} and the corresponding boundary numerical fluxes
\eqref{eul:BCw:boundaryOperator}, \eqref{eul:mirBC} and \eqref{eul:Hio}.
We define the primal residual of problem \eqref{eul:discPrimal} by
\begin{align}
 \label{eul:primalRes} 
 \resEul{\vvp} := - \bbh(\wh, \vvp),\qquad \wh\in\VVh, \vvp\in \bbHj.
\end{align}
Integrating \eqref{eul:form2} by parts on each $K\in\Th$
and using \eqref{eul:A_s}--\eqref{eul:Pf}, we get
\begin{align}
  \label{eul:formRes1}
   \resEul{\va_h} := & - \sum_{K\in\mesh} \left( \intx{K}{}{\sum_{s=1}^2  \frac{\pd \ff_s(\wh)}{\pd x_s} \cdot \va_h}{x} \right. \\
   & \left. +  \intS{\partial K \backslash \dom}{ \left(\PPP(\wh\plus,\nn) - \HH(\wh\plus, \wh\minus,\nn) \right) \cdot \va_h\plus} \right. \nonumber \\ 
   & \left. 
   + \intS{\partial K \cap \dom}{ \left(\PPP(\wh\plus,\nn) - \Hbound(\wh\plus, \nn)  \right) \cdot \va_h} \right). \nonumber
\end{align}
%% Furthermore, on $\gomIO$ due to \eqref{eul:Hio} we have  \Vit{Predelat, see \eqref{HIO:cons}}
%% \begin{align} 
%%  \label{eul:resIO}
%%    & \intS{\partial K \cap \gomIO}{  \big(\PPP(\wh\plus,\nn) - \fluxVS(\wh\plus,\wh\minus, \nn) \big) \cdot \va_h}  \\
%%   = & \intS{\partial K \cap \gomIO}{  
%%   \big(\matP(\wh\plus,\nn)\wh\plus - \matP\Plus(\wh\plus,\nn)\wh\plus \nonumber\\
%%   & \qquad \qquad - \matP\Minus(\wh\plus,\nn ) \uu_\mathrm{RP}(\wh\plus,\wBC) \big) \cdot \va_h}  \nonumber \\
%%  = & \intS{\partial K \cap \gomIO}{ \matP\Minus(\wh\plus,\nn)( \wh\plus - \uu_\mathrm{RP}(\wh\plus,\wBC) ) \cdot \va_h }. \nonumber
%% \end{align}
Due to \eqref{PPP}, \eqref{LLRP}--\eqref{eul:Hio},
the integrand of the last term of \eqref{eul:formRes1}
on $\gomIO$ reads, 
\begin{align} 
 \label{eul:resIO}
   &  \PPP(\wh\plus,\nn) - \Hbound(\wh\plus,\nn)   \\
  = & 
  \matP(\wh\plus,\nn)\wh\plus - \matP\Plus(\wh\plus,\nn)\wh\plus
 - \matP\Minus(\wh\plus,\nn ) \uu_\mathrm{RP}(\wh\plus,\wBC) \nonumber \\
 = & \matP\Minus(\wh\plus,\nn)( \wh\plus - \uu_\mathrm{RP}(\wh\plus,\wBC) ). \nonumber
\end{align}
Based on \eqref{eul:formRes1}--\eqref{eul:resIO}, we define for any $K \in \mesh$
the element primal residuals
\begin{align} 
  \label{eul:primalResiduals1}
  \resFK{\wwh} & := - \sum_{s=1}^2 \frac{\partial}{\partial x_s} \ff_s(\wwh) 
  = - \sum_{s=1}^2 \matA_s(\wwh) \frac{\partial \wwh}{\partial x_s} \qquad \text{in } K,\\ 
%  \label{eul:primalResiduals2}
  \resFE{\wwh} & := 
  \begin{cases}
    \PPP(\wh\plus,\nn) - \HH(\wh\plus, \wh\minus,\nn)  \qquad &\text{on } \partial K \backslash \gom,\\
    \noalign{\vskip2pt}
    \matP\Minus(\wh\plus,\nn)\left( \wh\plus - \uu_\mathrm{RP}(\wh\plus,\wBC)\right) \qquad &\text{on } \partial K \cap \gomIO,\\
    \noalign{\vskip2pt}
    \PPP(\wh\plus,\nn) - \HboundW(\wh\plus, \nn)  \qquad &\text{on } \partial K \cap \gomW,\\
  \end{cases}
  \notag
%  r(\wwh) &= \PPP(\wh\plus,\nn) - \H(\wh\plus, \wh\minus,\nn)  \qquad \text{on } \partial K \backslash \partial \Om, \\ 
%  r_{\partial \Om}(\wwh) &= \PPP(\wh\plus,\nn) - \Hbound(\wh\plus,\wh\minus, \nn) \qquad \text{on } \partial K \cap \partial \Om,
\end{align}
where the term $\HboundW(\wh\plus,\nn)$ stands for either
\begin{align*}
  \matP_W(\uBC(\wh\plus),\nn) \uBC(\wh\plus) \quad \text{or} \quad
  \fluxVS(\wh\plus,\Mir(\wh\plus), \nn),
\end{align*}
depending on whether $\HboundWone$ or $\HboundWtwo$ is used, 
cf. \eqref{eul:BCw:boundaryOperator} and \eqref{eul:mirBC}, respectively.
% F concrete forms of the edge residuals $r(\wwh)$ corresponds to \eqref{eul:resInner} 
% and $r_{\partial \Om}(\wwh)$ corresponds to \eqref{eul:resW2} or \eqref{eul:resW3}, respectively, on $\gomW$ 
% and to \eqref{eul:resIO} on $\gomIO$.

Employing \eqref{eul:primalResiduals1} in \eqref{eul:formRes1} we obtain the residual form of the problem \eqref{eul:discPrimal}: 
find $\wwh \in \bShp$ such that
\begin{align}
  \label{eul:formRes2}
   &\resEul{\va_h} = \sum_{K\in\mesh} \left( \intx{K}{}{\resFK{\wwh} \cdot \va_h}{x}
   +  \intS{\partial K}{ \resFE{\wwh} \cdot \va_h\plus} 
   %+ \intS{\partial K \cap \dom}{ r_{\partial \Om}(\wwh) \cdot \va_h\plus} 
   \right)
   = 0 
\end{align}
holds for any $\va_h\in \bShp$.
The previous derivation leads to the following result.
\begin{theorem}
Let the numerical fluxes $\HH$ and $\Hbound$ used on inner and boundary edges 
%and the boundary value operator $\uBC(\cdot)$ 
be consistent (cf. \eqref{H:cons}, \eqref{HB:cons} and \eqref{HIO:cons}), 
then the discretization \eqref{eul:discPrimal} is consistent, i.e., 
if $\ww \in \VV$ is the exact solution of \eqref{eul:EE} then
it also nullifies the discrete formulation \eqref{eul:discPrimal}:
\begin{align} 
 \resEulTrue{\va} = 0 \qquad \forall \va \in \bbHj.
\end{align}
\end{theorem}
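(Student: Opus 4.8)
The plan is to exploit the residual representation \eqref{eul:formRes2}, observing that it was obtained \emph{solely} by element-wise integration by parts and therefore remains valid for any sufficiently regular function inserted in place of the discrete solution and for any test function in $\bbHj$, not only for $\va_h \in \bShp$. Thus I would first argue that, since $\ww \in \VV \subset [H^1(\Om)]^4$, replacing $\wwh$ by $\ww$ and taking an arbitrary $\va \in \bbHj$ in \eqref{eul:formRes2} is legitimate. This reduces the claim $\resEulTrue{\va} = 0$ to showing that the volume residual $\resFK{\ww}$ vanishes in each $K \in \mesh$ and that the face residual $\resFE{\ww}$ vanishes on each $\dK$.

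The volume term is immediate: by its definition in \eqref{eul:primalResiduals1} we have $\resFK{\ww} = -\sum_{s=1}^2 \partial \ff_s(\ww)/\partial x_s$, which is exactly the left-hand side of the Euler system \eqref{eul:EE} and hence identically zero because $\ww$ is the exact solution.

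For the face residual I would distinguish the three cases of \eqref{eul:primalResiduals1}. On an interior face $\dK \setminus \gom$ the key observation is that an $H^1$-function has single-valued traces, so $\ww\plus = \ww\minus$; the consistency \eqref{H:cons} of the numerical flux then gives $\PPP(\ww\plus,\nn) - \HH(\ww\plus,\ww\minus,\nn) = \PPP(\ww,\nn) - \HH(\ww,\ww,\nn) = 0$. On $\dK \cap \gomW$ the exact solution satisfies the impermeability condition \eqref{eul:BCw}, so the boundary-flux consistency \eqref{HB:cons} (already established for both $\HboundWone$ and $\HboundWtwo$) yields $\PPP(\ww\plus,\nn) - \HboundW(\ww\plus,\nn) = 0$. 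On $\dK \cap \gomIO$ the exact solution satisfies \eqref{eul:BCio}; using \eqref{eul:resIO} the residual equals $\PPP(\ww\plus,\nn) - \HboundIO(\ww\plus,\nn)$, which vanishes by the inlet/outlet consistency \eqref{HIO:cons}. Collecting the three cases shows $\resFE{\ww} \equiv 0$ on every $\dK$.

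With both residuals identically zero, \eqref{eul:formRes2} gives $\resEulTrue{\va} = 0$ for all $\va \in \bbHj$, which proves the theorem. I expect the only genuinely delicate point to be the very first step, namely the justification that the by-parts identity \eqref{eul:formRes2}, derived for discrete arguments, carries over to the exact solution; this hinges precisely on the $H^1$-regularity encoded in $\ww \in \VV$, which both legitimizes the element-wise Green's theorem and, through the continuity of traces, is exactly what makes the interior-face contributions collapse to the flux-consistency identity. The three interior/boundary cases themselves are then routine once the corresponding consistency properties \eqref{H:cons}, \eqref{HB:cons} and \eqref{HIO:cons} are invoked.
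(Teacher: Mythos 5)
Your proposal is correct and follows exactly the route the paper takes: the paper offers no separate proof beyond the remark that ``the previous derivation leads to the following result,'' and that derivation is precisely your argument --- the residual form \eqref{eul:formRes2} applied to $\ww$, with the volume residual $\resFK{\ww}$ vanishing because $\ww$ solves \eqref{eul:EE}, the interior-face residual vanishing by single-valuedness of the traces of an $H^1$ function together with the consistency \eqref{H:cons}, and the boundary residuals vanishing by the consistency lemmas \eqref{HB:cons} and \eqref{HIO:cons}. You merely make explicit the details (in particular the trace-continuity step on interior faces) that the paper leaves implicit.
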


% On the inner edges $\Gamma \in \Gamma_I$ each function $\vvp \in \VVh$ has two values (in sence of traces) 
% from the direction of the neighbouring elements $K_L, K_R \in \mesh.$ 

% Based on the various approaches to the choice of the numerical fluxes and the boundary conditions we rewrite the problem \eqref{eul:primalh} to
% \begin{align}
%   \label{eul:bbh}
%   \bbh(\w,\va) =& - \sum_{K\in\mesh}\int_{K}\sum_{s=1}^2
%   \ff_s(\w) \cdot\
%   \frac{\partial \va}{\partial x_s}\dx \\
%   & + \sum_{\Gamma\in\FhI} \int_{\Gamma}
%     \HH(\w^{(L)}, \w^{(R)},\bkn_\Gamma)\cdot \jump{\va}\dS \nonumber \\
%   & + \sum_{\Gamma\in\FhW} \int_{\Gamma} \fWi(\w_{\Gamma}^{(L)},\nn_{\Gamma})\cdot\va\dS \nonumber \\
%   & + \sum_{\Gamma\in\Fhio} \int_{\Gamma}
%     \HH\left(\w^{(L)}, \BC(\w^{(L)}, \w_{\rm BC}),\bkn_\Gamma \right)
%     \cdot\va\dS \nonumber, 
%  \nonumber
% \end{align}
% where  $i=1$ or $i=2$, if we use  the impermeability boundary
% condition  \eqref{eul:PfW1} or \eqref{eul:eq:mirBC}, respectively.
% Moreover, the inlet/outlet boundary operator $\BC$ represents 
% $\BC^{\rm phys}$, $\BC^{\rm LRP}$ and $\BC^{\rm RP}$
% given by \eqref{eul:BCphys}, \eqref{eul:bcs16} and \eqref{eul:bcs22}, respectively.

\section{Discrete adjoint problem and the adjoint consistency} 
\label{sec:EulerNewton}

The discrete problem \eqref{eul:discPrimal} exhibits the system of nonlinear algebraic equations
which has to be solved iteratively.
The popular Newton method (treated, e.g., in \cite{bas-reb-JCP,BR-2DGM,HH06:SIPG1}) requires
the evaluation of the Jacobi matrix.
%$\frac{D\F_h(\bar{\W})}{D\U}$ in \eqref{eul:weakContDual} 
%needed for the Newton method, see \eqref{mod:Newton2},
However, the terms corresponding to the numerical Vijayasundaram fluxes \eqref{eul:VS} are not
continuously differentiable and then a regularization would be required.
Therefore we do not compute the derivative $\ahP[\uh](\cdot,\cdot)$ precisely, 
but instead we approximate it by the linearized form
\begin{align}
  \label{lin}
 \ahP[\wh](\cdot,\cdot) \approx \ahL(\wh,\cdot,\cdot),
\end{align}
which we employed in \cite{impl_eu,st_estims_NS,stdgm_est}. However, in these papers,
we considered a different treatment of wall boundary conditions, which leads to
a non-adjoint consistent discretization. 
In the following, we present a modified linearization of type
\eqref{lin} which is adjoint consistent.

\subsection{Linearization of the form $\bbh$} % replace $\bbh$ by \bbh in Overleaf

%Here, we present this formal linearization of the form $\bbh$ which we use as an approximation of the Jacobi matrix. 
The semilinear form \eqref{eul:form2} can be written as 
\begin{align}
  \bbh(\wh,\vah) = 
  & - \sum_{K\in\mesh}\intx{K}{}{\sum_{s=1}^2
    ( \matA_s(\wh) \wh )\cdot \frac{\partial \vah}{\partial x_s}}{x}
  &\hspace{-3pt} (=:\chij(\wh,\vah))  \\
  &+ \sum_{K \in \mesh} \intS{\partial K \backslash \dom}{\HH(\wh\plus, \wh\minus,\nn)\cdot \vah}
  &\hspace{-3pt}(=:\chid(\wh,\vah)) \nonumber\\
  & +\sum_{K \in \mesh} \intS{\partial K \cap \gomW}{ \HboundWi(\wh\plus, \nn) \cdot \vah}
  &\hspace{-3pt}(=:\chit(\wh,\vah)) \nonumber\\
  &  +\sum_{K \in \mesh} \intS{\partial K \cap \gomIO}{\HboundIO(\wh\plus, \nn) \cdot \vah }
  &\hspace{-2pt}(=:\chic(\wh,\vah))  \notag
\end{align}
with $i=1,2$. We linearize each of the four terms $\chij,\ldots,\chic$ individually.

For the first one we define the linearized form $\chijL : \VVh \times \bbHj \times \bbHj \to \matR$ 
by
\begin{align}
 \label{eul:chi1}
 \chijL(\whb,\wh, \va_h) = - \sum_{K\in\mesh}\intx{K}{}{\sum_{s=1}^2 \matA_s(\whb) \wh \cdot \frac{\partial \va}{\partial x_s}}{x}.
\end{align}
Employing \eqref{eul:A_s} we have
$\chijL(\wh,\wh, \va_h) = \chij(\wh,\va_h)\ \forall \wh\in\VVh\,\forall\va_h\in\bbHj$
and obviously $\chijL$ is linear with respect to its second and third arguments. 

For linearization of the term $\chid$ we exploit the definition of the Vijayasundaram numerical fluxes \eqref{eul:VS}.
Since every inner edge in the triangulation appears twice in the sum 
we reorganize the summation.
%and exploit the fact that 
%\begin{align}
%\label{eul:Pplusminus}
%\matP^-\left(\aver{\whb}_{\Gamma},\nn \right) \wh\minus = - \matP^+\left(\aver{\whb}_{\Gamma},-\n%n_K \right) \wh\minus,\quad \whb,\wh\in\bbHj,\, \nn\in \Br.
%\end{align}
Using the notation \eqref{jump},
the linearized form $\chidL:\VVh \times \bbHj \times \bbHj\to\matR$ reads
\begin{align}
 \label{eul:chi2}
  \chidL(\whb, \wh,\va_h) 
 &= \sum_{K \in \mesh} \intS{\partial K \backslash \dom}{ \big[\matP^+\big(\aver{\whb}_{\Gamma},\nn \big) \wh \plus \\
 & \qquad \qquad \qquad + \matP^-\big(\aver{\whb}_{\Gamma},\nn \big) \wh\minus \big]\cdot \va_h \plus } \nonumber\\ 
  &= \sum_{K \in \mesh} \intS{\partial K \backslash \dom}{ \matP^+\left(\aver{\whb}_{\Gamma},\nn \right) \wh \plus \cdot \jump{\va_h}_{K} }.
  \nonumber
\end{align}
Obviously $\chidL(\wh,\wh, \va_h) = \chid(\wh,\va_h)\ \forall \wh\in\VVh\,\forall\va_h\in\bbHj$
and $\chidL$ is linear with respect to its second and third arguments. 

Regarding the term $\chit$ we have to proceed separately for each of the approaches $\HboundWi, \, i = 1,2$. %, cf. \eqref{eul:BCw:boundaryOperator} and \eqref{eul:mirBC}. 
Based on the definition \eqref{eul:BCw:boundaryOperator} of $\HboundWone$
% $\press(\uBC( \w)) \left(0, \,n_1, \dots, \,n_d, 0\right) = \matP_W(\uBC( \w),\nn) \uBC( \w)$
we may introduce its linearization in the following form
  \begin{align} \label{eul:fWFLTwo}
  \HboundLOne(\bar{\w},\w,\nn) 
  = \matP_W(\uBC(\bar{\w}),\nn)\matU_{\Gamma} \w,\quad \bar{\w}, \w\in\defD. %,\ \nn\in \Br,
  \end{align}
The linearization of $\HboundWtwo$  is done similarly to \eqref{eul:chi2}. %\eqref{eul:fluxVSLin} 
Since 
%  \begin{align} 
%   \frac{\ww + \Mir(\ww)}{2} = \frac{\ww + \matM(\ww)}{2} = \uBC(\ww)
%  \end{align}
$\frac{\w + \Mir(\w)}{2} = \uBC(\w)$,
in virtue of \eqref{eul:mirBC} and \eqref{eul:VS}, we get
\begin{align} \label{eul:fluxThreeLin}
 \HboundLTwo(\bar{\w},\w,\nn) = \left( \matP^+\left(\uBC(\bar{\w}),\nn \right) +
\matP^-\left(\uBC(\bar{\w}),\nn \right) \matM_{\Gamma} \right) \w\plus.
\end{align}
Employing the linearized forms \eqref{eul:fWFLTwo} and \eqref{eul:fluxThreeLin}, we set
\begin{align} 
  \label{eul:chi3one}
  \chitiL(\whb, \wh,\va_h) &= \sum_{K \in \mesh} \intS{\partial K \cap \gomW}{ \HboundLI(\whb,\wh,\nn) \cdot \va_h} \\
							&= \sum_{K \in \mesh} \intS{\partial K \cap \gomW}{\va\T_h \HboundMatrix{i}(\whb,\nn) \wh}, \nonumber
\end{align}
where $i=1,2$ and the matrix $\HboundMatrix{i}(\whb,\nn)$ corresponds to 
one of the matrices in \eqref{eul:fWFLTwo} and \eqref{eul:fluxThreeLin}, i.e., 
\begin{align}
\label{eul:Hmatrix1}
\HboundMatrix{1}(\whb,\nn) &= \matP_W(\uBC(\bar{\w}),\nn)\matU_{\Gamma}, \\
\label{eul:Hmatrix2}
\HboundMatrix{2}(\whb,\nn) &= \matP^+\left(\uBC(\bar{\w}),\nn \right) +
\matP^-\left(\uBC(\bar{\w}),\nn \right) \matM_{\Gamma}.
\end{align}
By exploring the definitions of $\HboundLI, \, i = 1,2$ we get that both $\chitiL$ 
are linear with respect to the second and third argument 
and they meet the consistency property
$\chitiL(\wh, \wh,\va_h) = \chiti(\wh,\va_h)\ \forall \wh\in\VVh\,\forall\va_h\in\bbHj.$
% \begin{proof}
%   Obviously, due to Lemma \ref{eul:lemmaP}
%   we find that 
%   the linearized form $\HboundL$ is consistent with $\HboundWone$,
%   i.e., 
%   \begin{align} \label{eul:fWL1}
%   \HboundL(\w, \w,\nn) = \matP_W(\w,\nn) \w = \HboundWone(\w,\nn)\quad \Forall \w\in\defD\ \Forall \nn\in \Br
%   \mbox{ such that } \vv\cdot\nn =0.
%   \end{align}
% \end{proof}

At last, $\chic$ is approximated with the aid of the forms 
\begin{align}
 \label{eul:chi4}
  \chicL(\whb, \wh,\va_h)
  =  \sum_{K \in \mesh} \intS{\partial K \cap \gomIO}{\left(\matP^+(\whb\plus,\nn) \wh\plus \right)\cdot\va_h},
\end{align}
and
\begin{align}
\label{eul:bbht}
\bbht(\bar{\w}_h,\va_h)
 =   - \sum_{K \in \mesh} \intS{\partial K \cap \gomIO}{\left(\matP^-(\whb\plus,\nn) 
       \whb\minus \right) \cdot\va_h},
\end{align}
where  $\whb\minus=\uuRP(\whb\plus, \wBC)$, cf. \eqref{LLRP}.
Let us underline that in the arguments of $\matP^\pm$ we use just $\whb\plus$ and
not the mean value of the left- and right-hand side state vectors as in \eqref{eul:VS}.
Moreover, if $\mbox{supp}\, \va_h\cap(\gomIO) =\emptyset$,
then $\bbht(\bar{\w}_h,\va_h)=0$.
 
Obviously, due to \eqref{eul:chi4} and \eqref{eul:bbht}, we have
\begin{align}
 \label{chiz}
 \chicL(\w_h, \w_h,\va_h)   - \bbht(\w_h,\va_h) = \chic(\w_h,\va_h)\qquad
 \forall \w_h\in\VVh\,\forall\va_h\in \bbHj.
\end{align} 

Taking together all the previously defined linearizations, 
we set 
\begin{align}
  \label{eul:bbhL}
  &\ahL( \whb, \wh, \va_h) = \sum_{i=1}^4 \chiLi( \whb, \wh, \va_h),\qquad
  \whb, \wh\in\VVh\,\forall\va_h\in \bbHj
\end{align}
and we get the consistency relation
\begin{align}
  \label{consA}
  \bbh(\wh,\va_h) = \ahL( \wh, \wh, \va_h) -  \bbht(\wh,\va_h)\qquad \forall
  \wh\in\VVh\,\forall\va_h\in\bbHj.
\end{align}

Finally, we introduce the iterative process for the solution of \eqref{eul:discPrimal}.
Let $\whN\in\bShp$ be an initial approximation, 
we define the sequence $\whk\in\bShp,\ k=1,2,\dots$ such that
\begin{subequations}
  \label{newton}
  \begin{align}
  \label{newton1}
  & \whkP := \whk + \delta \dhk, \quad k=0,1,\dots\\
  \label{newton2}
  & \mbox{where } \dhk\in\bShp \mbox{ solves } \quad
   \ahL( \whk, \dhk, \va_h) =  - \bbh(\whk,\va_h) \quad \forall \va_h\in\bShp, 
  \end{align}
\end{subequations}
and $\delta\in(0,1]$ is the damping factor improving the global convergence.
  The identity \eqref{newton2} exhibits a linear algebraic
  system which is solved iteratively by, e.g., GMRES method with block ILU(0) preconditioner,
  see \cite{stdgm_est} for details. 
  For $\delta=1$, the iterative process \eqref{newton}
  is equivalent to
  \begin{align*}
    \ahL( \whk, \whkP, \va_h) =  \bbht(\whk,\va_h)\quad \forall\va_h\in\bShp,\ k=0,1,\dots.
  \end{align*}

\subsection{Discrete adjoint problem and adjoint consistency} 
\label{sec:eulAdjointConsistency}

In this section we introduce the discrete adjoint problem based on the linearization of
the form $\bbh$ 
given by \eqref{eul:bbhL}. Further, the adjoint consistency of the discretization is studied. 
% Our approach is mainly inspired by the procedure presented in \cite{Hartmann2015Generalized}. 

In order to obtain an adjoint consistent scheme, we modify the target functional $\J$ from
\eqref{eul:cDL} as generally mentioned in the introduction. %presented in \eqref{mod:functionalMod}. 
% Such procedure is recommended already in \cite{Hartmann2007Adjoint}. 
For the functional given by \eqref{eul:cDL} we set 
\begin{align} 
\label{eul:Jd}
 \Jd(\wh) = \intS{\gomW}{ \HboundWi(\wh\plus,  \nn) \cdot \tvtheta},\quad i=1,2,
\end{align}
where $\HboundWone$ and $\HboundWtwo$ are given by \eqref{eul:BCw:boundaryOperator}
and \eqref{eul:mirBC}, respectively, 
and $\tvtheta = (0,\vartheta_1, \vartheta_2, 0)\T$ on $\gomW$ with $\vtheta$ 
chosen either by \eqref{eul:thetaDL} or \eqref{eul:thetaM}.
% and $\tvtheta = 0$ on $\dom \backslash \gomW.$ 
Obviously, if $\w$ is the 
exact solution of \eqref{eul:EE} then, due to \eqref{HB:cons} and \eqref{eq:PPw2}, we have
\begin{align} 
 \HboundWi(\ww\plus, \nn) \cdot \tvtheta = \press(\w) \nn \cdot \vtheta,\quad i=1,2.
\end{align}
By comparison of the definitions \eqref{eul:cDL} and \eqref{eul:Jd}, we observe that
\begin{align}
  \label{consJJh}
  \Jd(\w) = \J(\w),
\end{align}
which means that the particular modification $\Jd$ is consistent with $\J$.
Further, using the linearization of the numerical fluxes  \eqref{eul:fWFLTwo} and \eqref{eul:fluxThreeLin}, 
we introduce the linearization of the discrete functional 
\begin{align} 
\label{eul:JdLin}
 \JdL(\wh, \vah) & = \intS{\gomW}{ \HboundLI(\wh, \vah, \nn) \cdot \tvtheta} \\
 & = \intS{\gomW}{\vahT \Big(\HboundMatrix{i}(\wh\plus,\nn))\Big)\T  \tvtheta},\quad  i=1,2,\notag
\end{align}
where $\HboundMatrix{i},\ i=1,2$ are given by \eqref{eul:Hmatrix1}--\eqref{eul:Hmatrix2}.

 Finally we introduce the discrete adjoint problem.
\begin{definition} 
 We say that $\zzh \in \bShp$ is the discrete adjoint solution 
 if it satisfies
\begin{align} 
\label{eul:discrAdjoint}
\ahL( \wh, \va_h, \zzh) = \JdL(\wh,\va_h) \qquad \forall \va_h \in \bShp,
\end{align}
where $\ahL$ and $\JdL$ are given by \eqref{eul:bbhL} and \eqref{eul:JdLin}, respectively.
Further we define the adjoint residual 
\begin{align}
 \label{eul:adjointRes} 
 \resEulD{\va_h} :=  \JdL(\wh,\va_h) - \ahL( \wh, \va_h, \zzh),\ \wh\in\VVh,\,\va_h,\zzh\in\bbHj.
\end{align}
\end{definition}

\begin{theorem}
  \label{the:dual_cons}
  Let $\fluxVS$ be  the Vijayasundaram numerical flux. 
 Let $\Jd$ be the modified target functional defined in \eqref{eul:Jd}.
 Then the discretization \eqref{eul:discPrimal} is adjoint consistent, i.e., 
 the exact solution $\w$ of the flow equations \eqref{eul:EE} and its adjoint counter-part $\zz,$ solving the continuous adjoint problem \eqref{eul:weakContDual}, satisfy
 \begin{align} 
 \label{eul:adjointConsistency} 
  %\bbh'[\ww](\va, \zz) = \Jd'[\ww](\va) \qquad \forall \va \in \Vp.
%   \ahL( \ww; \va, \zz) = \JdL(\ww;\va) \qquad \forall \va \in \Vp.
  \resEulDTrue{\va} = 0 \qquad \forall \va \in \Vp.
 \end{align}
\end{theorem}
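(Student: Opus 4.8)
The plan is to start from the definition of the adjoint residual \eqref{eul:adjointRes} evaluated at the exact solution $\w$ and its continuous adjoint $\zz$, namely $\resEulDTrue{\vvp} = \JdL(\ww,\vvp) - \ahL(\ww,\vvp,\zz)$, and to show that this quantity reduces exactly to the weak continuous adjoint formulation \eqref{eul:weakContDual}, which $\zz$ satisfies by hypothesis. Because $\ahL$ was built from the four pieces $\chijL,\dots,\chicL$ (and the auxiliary form $\bbht$), I would treat each term separately, inserting $\ww$ (the smooth exact solution) as the linearization point $\whb$ and noting that for smooth $\ww$ the jumps and interior-face contributions collapse.

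First I would dispose of the volume term: by \eqref{eul:chi1}, $\chijL(\ww,\vvp,\zz) = -\sum_K \intK{\sum_s \matA_s(\ww)\vvp \cdot \partial \zz/\partial x_s}$, which already matches the volume integral in \eqref{eul:weakContDual} after using $\matA_s\T$ symmetry of the pairing; note the roles of the second and third arguments are swapped relative to the primal, since the adjoint problem tests with $\vvp$ in the second slot and $\zz$ in the third. Next, for the interior-face term $\chidL$ from \eqref{eul:chi2}, I would use that $\ww$ is continuous across interior faces, so $\jump{\ww}=\bnul$ and $\aver{\ww}=\ww$; the Vijayasundaram splitting $\matP^+ + \matP^- = \matP$ then telescopes the face sum into a single consistent flux, and since the exact solution has no jumps these interior contributions cancel, leaving only the genuine boundary integrals on $\gom$. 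This is the step where consistency of $\fluxVS$ (hypothesis) and the identity $\matP=\matP^++\matP^-$ do the work.

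The heart of the argument is the boundary analysis, split into $\gomW$ and $\gomIO$. On $\gomW$, I would combine the linearized wall term $\chitiL$ (via the matrix $\HboundMatrix{i}$ from \eqref{eul:Hmatrix1}--\eqref{eul:Hmatrix2}) against $\zz$ with the modified-functional term $\JdL$ against $\tvtheta$; the key is that at the \emph{exact} solution, which satisfies the impermeability condition \eqref{eul:BCw}, Lemma~\ref{eul:lemmaP2} gives $\matP(\ww,\nn)=\matP_W(\ww,\nn)$ on $\gomW$, and Lemma~\ref{eul:lemmaP} (particularly \eqref{lem1.1}) lets me reduce both the wall flux and the functional contribution to the scalar pressure-derivative form $\tfrac{D\press}{D\ww}(\tnn\cdot\,\cdot)$. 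Matching these reproduces precisely the wall boundary term of the continuous weak adjoint \eqref{bc:B} and the right-hand side $\JP[\ww](\vvp)$ via \eqref{eul:DJ}. On $\gomIO$, I would use that $\vvp\in\Vp$ forces $\matP^-(\ww,\nn)\vvp=\bnul$ (definition \eqref{eul:Vp}), so the $\matP^-$ contributions in $\chicL$ and $\bbht$ vanish against the test direction, while the $\matP^+$ part matches the $\matP^+$-boundary term in \eqref{bc:A}; the consistency of the Riemann solver \eqref{LLRP} ensures the far-field term drops when evaluated at $\ww$.

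The main obstacle I anticipate is the bookkeeping on $\gomW$ for the two distinct wall treatments $i=1,2$ simultaneously: I must verify that both $\HboundMatrix{1}=\matP_W\matU_\Gamma$ and $\HboundMatrix{2}=\matP^++\matP^-\matM_\Gamma$, when transposed and paired with $\zz$, yield the \emph{same} reduced scalar form, and that the corresponding modifications of the functional $\Jd$ in \eqref{eul:Jd} are exactly the ones that cancel the spurious pieces. This hinges on the identity $\matM_\Gamma\matU_\Gamma=\matU_\Gamma$ and the relation $\Mir(\w)=2\uBC(\w)-\w$ noted after \eqref{Mir}, together with the fact that $\ww$ satisfies $\vv\cdot\nn=0$ so that $\uBC(\ww)=\Mir(\ww)=\ww$; once these are invoked, both cases collapse to the single wall boundary condition $n_1\z_2+n_2\z_3=\nn\cdot\vtheta$ appearing in \eqref{eul:contDualStrongBC}. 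Assembling the volume, interior-face, and boundary contributions then reproduces \eqref{eul:weakContDual} verbatim, whence $\resEulDTrue{\vvp}=0$ for all $\vvp\in\Vp$.
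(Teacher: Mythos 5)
Your proposal is correct and follows essentially the same route as the paper's proof: the same term-by-term decomposition of $\ahL$ and $\JdL$, the vanishing of the interior-face contribution by smoothness, and the same wall/inlet--outlet boundary analysis via Lemmas~\ref{eul:lemmaP} and~\ref{eul:lemmaP2}, the adjoint boundary conditions, and the identities $\uBC(\ww)=\ww$ and $\matM_\Gamma\va=\va$ for $\va\in\Vp$; the only cosmetic difference is that you match the result against the weak adjoint identity \eqref{eul:weakContDual}, whereas the paper integrates by parts once more and invokes the strong form \eqref{eul:contDualStrong}--\eqref{eul:contDualStrongBC}. Two small corrections to your bookkeeping: the interior-face term $\chidL(\ww,\va,\zz)=\sum_{K}\int_{\partial K\setminus\gom}\matP^{+}(\aver{\ww},\nn)\va\plus\cdot\jump{\zz}\,\dd S$ vanishes because the \emph{adjoint} solution $\zz$ has no jumps, not because $\jump{\ww}=\bnul$; and $\bbht$ does not enter $\ahL$ (cf.\ \eqref{eul:bbhL}), so on $\gomIO$ there is no $\matP^{-}$ or far-field contribution to cancel --- $\chicL$ already consists of the $\matP^{+}$ term alone, which vanishes against $\zz$ by \eqref{eul:contDualStrongBC}.
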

\begin{proof}
  Similarly as the residual formulation of the primal problem \eqref{eul:formRes2},
  we introduce,   using \eqref{eul:adjointRes},
  the residual formulation of the discrete problem \eqref{eul:discrAdjoint} by 
\begin{align} 
 \label{eul:discrAdjoint2} 
 &\resEulD{\va_h} \\
 =& \sum_{K \in \mesh} \intx{K}{}{ \resDFK{\wwh}{\zzh} \cdot \va_h}{x} + \intS{\dK}{ \resDFI{\wwh}{\zzh} \cdot \va_h\plus} 
 = 0 \quad \forall \va_h \in \bShp, \notag
\end{align}
where the volume and edge residual terms are defined by 
\begin{align} 
\label{eul:dualResiduals1}
 \resDFK{\wwh}{\zzh} &=  \sum_{s=1}^2  \matA_s\T(\wh) \frac{\partial \zzh}{ \partial x_s}  \qquad \text{in } K,\\ 
 \label{eul:dualResiduals2}
 \resDFI{\wwh}{\zzh} &= 
 \begin{cases}
- \matP\Plus(\aver{\wwh},\nn)\T \jump{\zzh} \qquad &\text{on } \partial K \backslash \gom, \\ 
%  \resDFB{\wwh}{\zzh} &=  - \mH_{\Gamma}^L(\wwh,\nn)\T \zzh \qquad \text{on } \partial K \cap \partial \Om, \\
 %\resDFIO{\wwh}{\zzh} &=  
    \noalign{\vskip2pt}
 - \matP\Plus(\wh\plus,\nn)\T \zzh \qquad &\text{on } \partial K \cap \gomIO, \\
 %\resDFW{\wwh}{\zzh} &=   
    \noalign{\vskip2pt}
 (\HboundMatrix{i}(\wh\plus,\nn))\T ( \tvtheta - \zzh) ,\ i=1,2 &\text{on } \partial K \cap \Gamma_{W},
 \end{cases}
\end{align} 
which follows from the definitions of $\chiLi, i=1,\ldots,4$ in 
\eqref{eul:chi1}, \eqref{eul:chi2}, \eqref{eul:chi3one}, \eqref{eul:chi4} and 
the definition of the linearization of the modified target functional \eqref{eul:JdLin}.

 Employing  \eqref{eul:discrAdjoint2} -- \eqref{eul:dualResiduals2}, 
 we  rewrite the left-hand side of \eqref{eul:adjointConsistency} to 
 \begin{align} 
 \sum_{K \in \mesh} \intx{K}{}{ \resDFK{\ww}{\zz} \cdot \va}{x} + \intS{\dK}{ \resDFI{\ww}{\zz} \cdot \va\plus} 
 \qquad \forall \va \in \Vp.
\end{align}
% Due to the consistency of the numerical fluxes the exact solution satisfies 
% $\HH(\ww\plus,\ww\minus,\nn) = \PPP(\ww,\nn)$ and 
% $\Hbound(\ww\plus,\uBC(\ww\plus),\nn) = \PPP(\ww,\nn),$ since $\uBC(\ww\plus) = \ww\plus$ for the exact solution and any consistent $\uBC$. 
Reminding the strong formulation of the continuous adjoint problem \eqref{eul:contDualStrong} we 
see that $\resDFK{\ww}{\zz} = 0$  for any $K \in \mesh.$
Further, due to the assumed smoothness of the adjoint solution $\zz$ we also have 
$\resDFI{\ww}{\zz} = 0$ on $\dKI.$

% Hence inside the domain $\Om$ for any $K \in \mesh$ due to the assumed smoothness of the adjoint solution $\zz$ we get
% $\resDFK{\ww}{\zz} = 0$ 
% and $\resDFI{\ww}{\zz} = 0$ on $\dKI$.

The residuals on the boundary $\gom$ are examinated separately.
If the numerical flux $\HboundWone$ given by \eqref{eul:BCw:boundaryOperator}
is used on $\gomW$, we 
exploit that $\uBC(\ww) = \ww$ for the exact solution and $\uBC$ given by \eqref{eul:bvW}.
Recalling \eqref{eul:Hmatrix1} and \eqref{lem1.1}, we get
\begin{align}
  \label{Du8}
\resDFI{\ww}{\zz} 
& = \left(\HboundMatrix{1}(\ww,\nn)\right)\T(\tvtheta - \zz) 
= \matU_{\Gamma}\T \matP\T_W(\uBC(\ww),\nn) (\tvtheta - \zz)  \\
&= \matU_{\Gamma}\T \frac{D \press(\ww)}{D\ww}(0,n_1, n_2, 0)\T  \cdot \left( \tvtheta - \zz \right)  \nonumber \\
&= \matU_{\Gamma}\T \frac{D \press(\ww)}{D\ww} \left( \nn \cdot \vtheta - (n_1 z_2+n_2 z_3) \right) = 0, \nonumber
\end{align}
since the adjoint solution $\zz$ satisfies the boundary condition \eqref{eul:contDualStrongBC}. 
%Furthermore, we did not have to use the properties of the space $\Vp$ and hence this 
%choice of the boundary numerical flux nullifies the edge residual even for any $\va_h  \in \bShp.$ 

If the numerical flux $\HboundWtwo$ given by \eqref{eul:mirBC} is used on $\gomW$ then
using \eqref{eul:Hmatrix2}, we have 
\begin{align} 
\label{eul:adjointW1}
%\JdL(\ww;\va) - \zeta_3^{3,\mathrm{L}}(\ww,\va, \zz) =& 
& \intS{\dK \cap \gomW}{\resDFI{\ww}{\zz} \va}
 =  \intS{\dK \cap \gomW}{\big(\HboundMatrix{2}(\ww,\nn)\big)\T(\tvtheta - \zz) \va} \\
& = \intS{\dK \cap \gomW}
       { (\tvtheta - \zz )\T \big(\matP\Plus(\uBC(\ww),\nn) 
+ \matP\Minus(\uBC(\ww),\nn)\matM_{\Gamma}\big) \va }. \nonumber
\end{align}
%Here, we have to exploit the properties of $\va$ belonging to $\Vp.$ 
Since $\va \in \Vp$, cf. \eqref{eul:Vp}, it holds
%$\BC'[\ww](\vvp) = n_1 \va_2 + n_2 \va_3 =  0$ and 
$n_1 \va_2 + n_2 \va_3 =  0$ and 
hence $\matM_{\Gamma} \va = \va$.
Further, the exact solution $\w$ satisfies $\vv \cdot \bkn = 0$ then $\uBC(\w) = \w$
and together with  relation $\matP=\matP\Plus+\matP\Minus$ and \eqref{eq:PPw1},
we obtain from \eqref{eul:adjointW1} that 
\begin{align} 
 \intS{\dK \cap \gomW}{\resDFI{\ww}{\zz}}
= & \intS{\dK \cap \gomW}{( \tvtheta - \zz )\T \matP(\ww,\nn) \va } \\
= & \intS{\dK \cap \gomW}{( \tvtheta - \zz )\T \matP_W(\ww,\nn) \va }  \nonumber \\
= & \intS{\dK \cap \gomW}{\va\T \matP_W\T(\ww,\nn) ( \tvtheta - \zz ) } = 0, \nonumber
\end{align} 
where the last equality follows from the same manipulations as in \eqref{Du8}. %{eul:adjointW1}.
% For any face $\Gamma \subset K \cap \gomW$ we have
% \begin{align} 
% \resDFB{\ww}{\zz} = \frac{D \press(\ww)}{D\ww}(0,n_1, n_2, 0)\T  \cdot \left( \tvtheta - \zz \right)  
% = \frac{D \press(\ww)}{D\ww} \left( \nn \cdot \vtheta - (z_2,z_3)\T \right) = 0, 
% \end{align}
% %Since $\v \cdot \nn = 0$ for the exact solution $\ww$ on $\gomW$ 
% which follows from \eqref{eul:bcWoperator} and the adjoint boundary condition \eqref{eul:contDualStrong}.

Finally, $\resDFI{\ww}{\zz} = 0$ on $\gomIO$ since $\zz$ fulfils
condition \eqref{eul:contDualStrongBC}. \qed
% \begin{align} 
% \resDFB{\ww}{\zz} = - \mH\T_{\Gamma,\ww}(\ww) \zz = - \sum_{s=1}^2 \frac{\ff_s(\ww)}{\pd x_s} n_s \cdot \zz = 0 \text{ on } \dom \backslash \gomW 
% \end{align} 
\end{proof}

\begin{remark}
  Theorem~\ref{the:dual_cons} asserts the adjoint consistency of both treatments
  of impermeable boundary conditions presented in Sections~\ref{sec:W1} and \ref{sec:W2}.
  On the other hand, the discussion at the end of \cite[Section 5]{Hartmann2007Adjoint} implies
  that the treatment of the impermeability condition using the mirror operator
  (sf. Section~\ref{sec:W2}) is not  adjoint consistent. However, it is not in a contradiction 
  with our results since we employ a different definition of the modified functional $\Jd$
  in \eqref{eul:Jd} for the ``mirror'' boundary conditions.
\end{remark}

\begin{remark}
  Let us shortly discuss the pertinence of the discretization \eqref{eul:discrAdjoint}
  of the adjoint problem \eqref{eul:contDualStrong}--\eqref{eul:contDualStrongBC}.
  The discrete formulation \eqref{eul:discrAdjoint} is based on linearization 
  rather than on proper differentiation of the nonlinear discrete problem 
  \eqref{eul:discPrimal} 
  like it is usually done, cf. \cite{Hartmann2015Generalized} or \cite{Hartmann2006Derivation}. 
  % In \cite{Hartmann2005Role} these terms are approximated by finite differences for Lax-Friedrichs and Vijayasundaram numerical fluxes.
  On the other hand, the omitted terms contain derivatives of the numerical fluxes \eqref{eul:VS}
  which lack the required smoothness to be differentiated exactly. 
  In \cite{Hartmann2005Role} these terms are approximated by finite differences for
  Lax-Friedrichs and Vijayasundaram numerical fluxes. 
  We note that omitting those terms does not cause any source of inconsistency
  into the discrete problem and from point of view it nicely corresponds to the continuous
  formulation of the adjoint problem \eqref{eul:contDualStrong}, 
  and hence the discretization \eqref{eul:discrAdjoint} seems 
  as a quite reasonable DG discretization of problem
  \eqref{eul:contDualStrong}--\eqref{eul:contDualStrongBC}.
\end{remark}

% From our point of view the resulting terms seem quite artificial 
% and do not correspond to the continuous formulation of the adjoint problem \eqref{eul:contDualStrong}. 
% On the other hand, the discretization \eqref{eul:discrAdjoint} seems 
% as a quite reasonable DG discretization of the problem  \eqref{eul:contDualStrong}.

\section{Error estimates and mesh adaptivity} 
\label{sec:EE}

\subsection{Goal-oriented error estimates}
As mentioned above, 
the adjoint problem is defined usually using  the derivatives of the discrete form $\ahP[\wh]$
and the target functional $\JP[\wh]$.
Then it can be proved that the  error of the quantity of interest is given by
(see, e.g., \cite{RannacherBook,BeckerRannacher01})
\begin{align} 
  \label{J_estim}
  J(\ww) - J(\wh) = \frac{1}{2} \resEul{\zz - \va_h}  + \frac{1}{2} \resEulD{\ww -\vpsi_h} + \Rtri, 
\end{align}
where $\va_h, \vpsi_h \in \bShp$ are arbitrary, 
$\resEul{\cdot}$ and  $\resEulD{\cdot}$ are the residuals of the primal and adjoint
problems similar to \eqref{eul:formRes2} and \eqref{eul:adjointRes}, respectively, and 
$\Rtri= O((\ww-\wh)^3)$ is a higher order term which is neglected.

In the presented formulation of the adjoint problem \eqref{eul:discrAdjoint},
we replaced the derivatives $\bbhP[\cdot]$ and $\JP[\cdot]$  by the linearizations
$\ahL$ and $\JdL$ given by \eqref{eul:JdLin} and \eqref{eul:bbhL}, respectively. 
That  may lead to additional errors, 
but we omit them in the error estimates similarly 
as the term $\Rtri$ is usually omitted even for exactly differentiated schemes. 
The numerical experiments, presented in Section~\ref{sec:numer}, 
indicate that this source of errors does not notably change the estimates 
(compared to results published for similar numerical experiments,
in \cite{Hartmann2007Adjoint}, \cite{Hartmann2015Generalized}).

The error identity \eqref{J_estim} contains the exact primal and adjoint solutions
$\ww$ and $\zz$ which have to be replaced by some computable
higher-order approximations denoted here $\whp$ and $\zhp$, respectively. 
Those can be computed either globally -- on a finer mesh and/or using polynomials
of higher degree, or with local reconstructions. Here, we are using the latter case,
see Section~\ref{sec:nonlReconstruct}.
% In order to obtain computable estimates of the error $J(\ww) - J(\wwh)$ 
% we replace $\ww \approx \whp$ and $\zz \approx \zhp,$ 
% where $\whp$ and $\zhp$ are reconstructed from the discrete solutions. 
Then we define the approximation of the error of the quantity of interest  by
\begin{align} 
  \label{eul:etaI}
  J(\w) - J(\wh) &\approx \etaI(\wwh,\zzh) \\
  &:= \tfrac{1}{2}\left( \resEul{\zhp - \Pi \zhp}   + \resEulD{\whp - \Pi \whp} \right), \nonumber
\end{align}
where $\Pi: [L^2(\Om)]^m \to \bShp$ denotes an arbitrary projection on $\bShp.$

For the purpose of mesh adaptation, we rewrite estimate \eqref{eul:etaI} element-wise
\begin{align}
 \label{eul:residuals}
 \etaI(\wwh,\zzh) = \sum_{K\in \mesh} \etaIK,
\end{align}
where 
\begin{align} 
 \label{eul:etaIK}
 \etaIK %= \etaIK(\wwh,\zzh)
 = \tfrac{1}{2}\left( 
 \resEul{(\zhp - \Pi \zhp) \,\chi_K } + \resEulD{(\whp - \Pi \zhp)\,\chi_K }
 \right).
\end{align}
Here, $\chi_K$ denotes the characteristic functions of the mesh element $K\in\Th$. 
% , i.e., 
% hence the localization above corresponds to plugging the partition of unity formed of 
% $1 = \sum_{K \in \mesh} \chi_K$  into \eqref{eul:etaS}. 
For the mesh adaptation, the values $|\etaIK|$, $K\in\Th$ are used.
It would be possible to replace $\etaI$ by the sum of the absolute values
of the local indicators $\sum_{K\in\Th}|\etaIK|$.
However, this estimate leads usually to a  needless overestimation of the true error
$|J(\w) - J(\wh)|$. 
Finally, let us note that we neglect the errors arising from the solution of nonlinear
algebraic systems by an iterative solver. These additional source of errors will be treated
in a separate paper.

%% Finally, we note that the local error indicators  
%% which can be used for mesh adaptation 
%% may be given by $|\etaIK|$.
%% %or 
%% %alternatively by $|\resEul{(\zhp - \Pi \zhp) \,\chi_K }|$ or \linebreak 
%% %$|\resEulD{(\whp - \Pi \zhp)\,\chi_K }|.$
%% We note that the absolute values are necessary only for defining mesh refinement indicators, 
%% while we avoid those for estimating of the error $J(\w) - J(\wh)$ 
%% since that would lead to a needless overestimation of the true error.

% Let us further denote that in exact arithmetics it holds 
% $\resEul{\Pi \zhp}   = \resEulD{\Pi \whp} = 0$ 
% so the definition \eqref{eul:residuals} would be equivalent to one where projection terms would be omitted. 
% Nevertheless, we prefer the definition \eqref{eul:residuals}, since it 
% enables handling also algebraic errors when inexact algebraic solvers are used for 
% obtaining $\wwh$ and $\zzh,$ see Section ???. 

%Several examples of local reconstruction techniques relevant for the DG method were introduced in Section \ref{sec:reconstr} for linear problems. 
%While the weighted least-squares reconstruction (Section \ref{sec:leastSquares}) can be used in the same way as it is defined for the linear problems the reconstruction based 
%on the solution of local problems (Section \ref{sec:locProblem}) has to be adapted to use it for nonlinear problems. 
%That will be the goal of the next section.

\subsection{Reconstruction based on solving local nonlinear problems} 
\label{sec:nonlReconstruct}

The higher-order approximation $\whp$ and $\zhp$
appearing \eqref{eul:etaI}--\eqref{eul:etaIK} are obtained using
a reconstruction $\krr:\bShp\to\bShpp$, cf. \eqref{eul:Shp}, as
$\whp=\krr(\wh)$ and $\zhp=\krr(\zh)$. The operator $\krr$ is defined by
a solution of local problems.
This technique was derived in \cite{ESCO-18}
for a linear scalar problem and it can be applied to
the reconstruction of the adjoint discrete solution $\zh$ since the adjoint problem is linear.
The situation is a bit different for the reconstruction of $\wh$ 
due to the nonlinearity of the problem \eqref{eul:discPrimal}. 

Similarly to \cite{ESCO-18}, for each $K \in \mesh,$ we prescribe
$\wKp: \Om \to \matR^4$ satisfying: 
\begin{subequations}
    \label{eul:rec}
    \begin{align}
       \label{eul:rec1}
       \mbox{(i)} &\quad \wKp|_{K'} := \wh|_{K'} \mbox{ for all }  K'\in\Th,\ K'\not= K, \\
       \label{eul:rec2}
       \mbox{(ii)} & \quad \wKp|_{K} \in [P^{p_K+1}(K)]^4, \\
       \label{eul:rec3}
       \mbox{(iii)}&\quad \bbh(\wKp, \va_h) = 0 \quad \forall \va_h \in [P^{p_K+1}(K)]^4,
    \end{align}
\end{subequations}
where $\bbh$ is the form given by \eqref{eul:form2}. 
%Since evidently $\wKp \in \bShpp$, 
Finally, we define $\whp \in \bShpp$ by  $\whp|_K := \wKp \quad \forall K\in\Th. $ 
The problem \eqref{eul:rec3} is nonlinear we calculate the reconstruction 
$\wKp$ iteratively similarly as the global problem mentioned in \eqref{newton}.
For completeness, let us mention that $\zhp$ is defined similarly as $\whp$ in \eqref{eul:rec}
where we replace \eqref{eul:rec3} by $\ahL(\wh,\vpsi_h,\zKp) = \JdL(\wh,\vpsi_h)$ 
$\forall \vpsi_h \in [P^{p_K+1}(K)]^4$, cf. \eqref{eul:discrAdjoint}.

\subsection{Adjoint weighted residual error estimate}

In order to proceed to the goal-oriented mesh adaptation, 
we estimate the residuals $\resEul{\cdot}$ and $\resEulD{\cdot}$ of the 
primal problem \eqref{eul:discPrimal} and adjoint problem \eqref{eul:discrAdjoint}, respectively. 
Employing the integration by parts like in \eqref{eul:formRes1} and \eqref{eul:formRes2}
the element-wise primal residual can be further estimated by
\begin{align}
  \label{resP}
 \resEul{\va} &= \sum_{K\in\mesh} \bigg( \intx{K}{}{\resFK{\wwh} \cdot \va}{x}
   +  \intS{\dK}{ \resFE{\wwh} \cdot \va\plus} 
   %+ \intS{\partial K \cap \dom}{ r_{\partial \Om}(\wwh) \cdot \va\plus} 
   \bigg) 
   \\
   &\leq  \sum_{K\in\mesh} \bigg( \sum_{i=1}^4 \RKVi \norm{\va^i}{K}
   + \RKBi \norm{\va^i}{\dK} \bigg), \nonumber
%    &\leq  \sum_{K\in\mesh} \left( \RKV \norm{\va}{K}
%    + \RKB \norm{\va}{\dK} \right) \nonumber
\end{align}
where $ \RKVi := \normS{\resFKi{\wwh}}{K}$, $\RKBi := \normS{\resFEi{\wwh}}{\dK}$,
the terms $\resFKi{\wwh}$ and $\resFEi{\wwh}$ denote the $i$-th component, 
$i=1,\ldots,4,$ of the local residual terms given by \eqref{eul:primalResiduals1}
and $\va^i$ denotes the $i$-th component of the vector function $\va.$

% As a reminder 
% $$ R(\wwh) = - \sum_{s=1}^2 \matA_s(\wwh) \frac{\partial \wwh}{\partial x_s}, \quad
%  r(\wwh) = \PPP(\wh\plus,\nn) - \HH(\wh\plus, \wh\minus,\nn),  \quad
%  r_{\partial \Om}(\wwh) = \PPP(\wh\plus,\nn) - \Hbound(\wh\plus,\wh\minus, \nn).$$ 
 
Similarly, we may proceed for the adjoint residual
\begin{align} 
  \label{resD}
 \resEulD{\va} 
 &= \sum_{K\in\mesh} \left( \intx{K}{}{\resDFK{\wwh}{\zzh} \cdot \va}{x}
   +  \intS{\dK}{ \resDFI{\wwh}{\zzh} \cdot \va\plus} 
   %+ \intS{\partial K \cap \dom}{ r_{\partial \Om}(\wwh) \cdot \va\plus} 
   \right) \notag 
   \\
   &\leq  \sum_{K\in\mesh} \left( \sum_{i=1}^4 \RDKVi \norm{\va^i}{K}
   + \RDKBi \norm{\va^i}{\dK} \right) %\nonumber \\
%    &\leq  \sum_{K\in\mesh} \left( \RDKV \norm{\va}{K}
%    + \RDKB \norm{\va}{\dK} \right) 
\end{align}
where $\RDKVi := \normS{\resDFKi{\wwh}{\zzh}}{K}$, 
$\RDKBi := \normS{\resDFIi{\wwh}{\zzh}}{\dK}$
and the terms $\resDFKi{\wwh}{\zzh}$ and $\resDFIi{\wwh}{\zzh}$ are the the $i$-th components, 
$i=1,\ldots,4,$ of the local residual terms given
\eqref{eul:dualResiduals1} and \eqref{eul:dualResiduals2}. 
Altogether, we obtain 
\begin{align} 
\label{eul:etaII}
|\etaI(\wwh,\zzh) | \leq \etaII(\wwh,\zzh), \qquad
\etaII(\wwh,\zzh) = \sum_{K \in \mesh} \etaIIK, %(\wwh,\zzh),
\end{align}
where 
\begin{align} 
 \label{eul:etaIIK}
 \etaIIK %(\wwh,\zzh)
 =  \frac{1}{2}   \Big( \sum\nolimits_{i=1}^4&\, \RKVi\norm{(\zhp - \Pi \zhp)^i}{K} + \RKBi \norm{(\zhp - \Pi \zhp)^i}{\dK} 
   \\ 
    & +  \RDKVi\norm{(\whp - \Pi \whp)^i}{K} + \RDKBi\norm{(\whp - \Pi \whp)^i}{\dK} \Big) . \notag
\end{align}
The terms including $\zhp - \Pi \zhp$ and $\whp - \Pi \whp$ are called {\em weights} 
and the estimates shaped like \eqref{eul:etaII} are usually referred as 
{\em dual weighted residual} error estimate, cf. \cite{RannacherBook}.

\subsection{Goal-oriented anisotropic error estimates}
\label{sec:AMAhp}

The form of the local error estimate $\etaIIK$ given by \eqref{eul:etaIIK} can be directly used
for the recently proposed goal-oriented anisotropic $hp$-mesh adaptation in
\cite{DWR_AMA,ESCO-18} for scalar linear convection-diffusion problems.
In \cite{DWR_AMA}, we derived goal-oriented error estimates including
the anisotropy of mesh elements and proposed $h$-adaptive mesh algorithm.
Further, in \cite{ESCO-18}, we extended this technique to the $hp$-variant.
These estimates can be written in an abstract way as
\begin{align}
  \label{eul:etaIIIK}
  \etaIIK \leq \etaIIIK :=
  \GG\left( \RKV, \RKB, \RDKV, \RDKB, \up,\zhp ; \nu_K, \sigma_K, \phi_K, p_K \right),
\end{align}
where $\GG$ is a function,
$\RKV$, $\RKB$, $\RDKV$, $\RDKB$ are the primal and adjoint residuals analogous to
those ones in \eqref{resP} and \eqref{resD}. Moreover, $\up$ and $\zhp$ are the reconstructed
higher-order approximations of the primal and adjoint solutions, respectively.
Moreover, parameters  $\nu_K$, $\sigma_K$ and $\phi_K$ denote the size (area),
aspect ratio and orientation of an anisotropic element $K\in\Th$, respectively,
and $p_K$ is the corresponding polynomial approximation degree.
The explicit dependence of $\GG$ on the shape of a grid triangle)
(parameters $\nu_K$, $\sigma_K$, $\phi_K$) is the key results for the anisotropic mesh adaptation.
Using the estimate \eqref{eul:etaIIIK}, we developed an algorithm which 
locally optimizes $\sigma_K$ and $\phi_K$ while $\nu_K$ and $p_K$ are fixed such that
the density of degrees of freedom is constant. Hence, for each $K\in\Th$ we have to minimize
a functional of two variables.

The extension of this $hp$-mesh adaptation algorithm to the system of
the compressible Euler equations is relatively straightforward. Only difference is that
the right-hand side of \eqref{eul:etaIIIK} contains the residuals for all component
of $\w$ and $\z$, i.e., 16 terms altogether. Hence, the form of $\GG$ is more complicated,
however the minimizing algorithm from \cite{DWR_AMA,ESCO-18} works with a minor modification.
The whole adaptive computational process is written in Algorithm~\ref{alg:AMAhp}.
All technical details are in \cite{DWR_AMA,ESCO-18}.
In comparison to recently published anisotropic $hp$-mesh adaptation algorithm
in \cite{RanMayDol-JCP20}, estimate \eqref{eul:etaIIIK}  does not employ the Lipschitz
continuity of convective fluxes.

\begin{algorithm}[t]
  \caption{Goal-oriented anisotropic $hp$-mesh adaptive algorithm}
  \label{alg:AMAhp}
  \begin{algorithmic}[1]
    \STATE let $\Thz$ be the initial (coarse) mesh and $\bShzp$ be the
    corresponding DG space %(with low polynomial approximation degrees);
    \FOR{$m=0,1,\dots$} 
    \STATE solve problems \eqref{eul:discPrimal} and 
    \eqref{eul:discrAdjoint} with outputs $\whm,\zzhm\in\bShmp$
    \STATE set reconstructions $\whmp,\zzhmp\in\bShmpp$ using \eqref{eul:rec} 
    \STATE  evaluate
    $\etaI(\whm,\zzhm)$ and $\etaIK,\ K\in\Thm$ using \eqref{eul:residuals} and \eqref{eul:etaIK}
    \IF {$\etaI\leq \TOL$} \STATE STOP the computation
    \ELSE
    \STATE using $\etaIK$, propose a new size of  $K\in\Thm$
    \STATE using $\etaIIIK$, optimize the shape of $K$ and polynomial degree $p_K$ for $K\in\Thm$
    \STATE generate new mesh $\ThmP$ and DG space $\bShmP$
    \ENDIF
    \ENDFOR
    
  \end{algorithmic}
\end{algorithm}

\section{Numerical experiments} 
\label{sec:numer}
In this section we present several experiments which support the theoretical
results presented in previous sections. First we show that the adjoint
problem \eqref{eul:discrAdjoint} produces a smooth adjoint solution which justifies the
adjoint consistency. Furthermore, we demonstrate the performance of the
anisotropic $hp$-mesh adaptation Algorithm~\ref{alg:AMAhp},
namely the convergence of the error $J(\w) - J(\wh)$ and its estimates
$\etaI$, $\etaII$ with respect to the number of degrees of freedom ($\DoF=\dim\bShp$). 
We consider several subsonic and transonic flows around  NACA0012 profile characterized
by the far-field Mach number $\Min$ and the  angle of attack  $\alpha$.

\subsection{Adjoint consistency of the DG discretization} 

The goal of this section is show that the treatments of the impermeable boundary
conditions on $\gomW$ by $\HboundWone$ and $\HboundWtwo$ from  \eqref{eul:BCw:boundaryOperator} and
\eqref{eul:mirBC}, respectively, together with the consistent modification of $\Jd$
by \eqref{eul:Jd} produce a smooth approximate adjoint solution $\zzh$ which supports
the adjoint consistency \eqref{eul:adjointConsistency} proved in Theorem~\ref{the:dual_cons}.

We consider the flow with the inlet Mach number $\Min = 0.5$ %around the NACA0012 profile
and the  angle of attack  $\alpha = 0 \degree$. The quantity of interest is the drag
coefficient defined by \eqref{eul:cDL1}.
%The curved shape of the profile
%%, defined by an analytical parametrization, see e.g. \cite[Section 8.5.3]{DGM-book},
%is approximated by piece-wise cubic polynomial functions. 
%The angle of attack is chosen as $\alpha = 0 \degree,$ 
%so the solution should be symmetric along the axis $y=0.$  
%Let us note that $\tan \alpha = v_2 /v_1$ , where $(v_1, v_2)\T$ is the far-
%field velocity vector. 
We employ a fixed triangular mesh, initially refined in the vicinity of the profile, 
which is shown in Figure~\ref{fig:euler_consistency_mesh} together with the
isolines of density obtained by $P_3$ polynomial approximation.

Figure \ref{fig:euler_consistency} compares the isolines of all components of the solution 
of the discrete adjoint problem \eqref{eul:discrAdjoint} using the numerical flux 
$\HboundWtwo$ given by \eqref{eul:mirBC} 
accompanied with the modified target functional \eqref{eul:Jd}  
and with the original target functional \eqref{eul:cDL} employing the proper
differentiation given by \eqref{eul:DJ}.

We see that while the adjoint consistent discretization leads to a smooth solution $\zzh$, 
the inconsistent one contains non-physical oscillations.
These results justify that the discrete adjoint problem \eqref{eul:discrAdjoint} is well-posed.
Furthermore, the discretization using the numerical flux 
$\HboundWone$ given by \eqref{eul:BCw:boundaryOperator}
leads to very similar results so we do not show them.
Finally, we note that our results are in agreement with 
\cite[Section 6.1]{Hartmann2015Generalized} 
where a similar example is presented, but with proper differentiation of the form $\bbh.$

% 
% \eqref{eul:fluxThreeLin}
% \eqref{eul:JdLin}

\begin{figure}
  \begin{center} 
    %   \hspace{1mm}
    \includegraphics[height=0.36\textwidth]{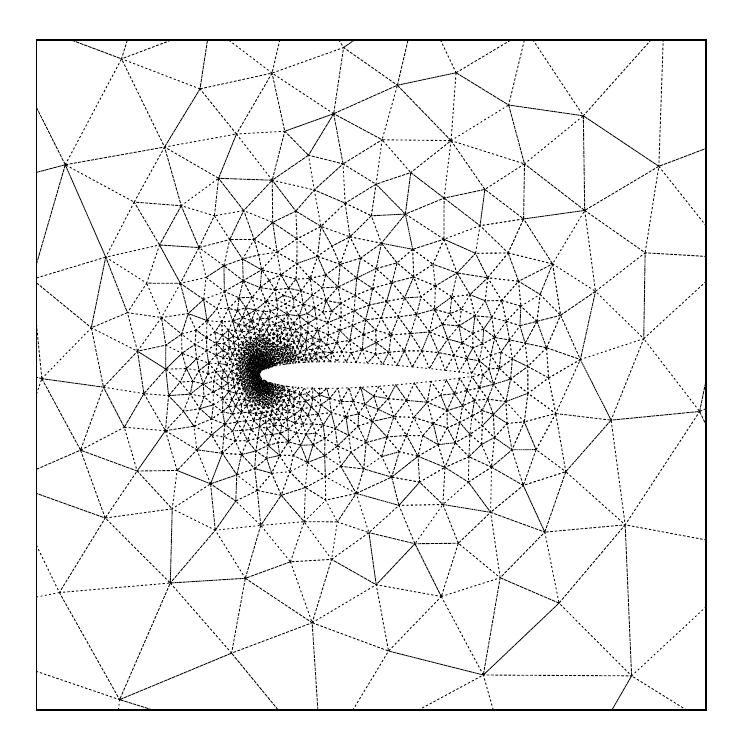}
    \includegraphics[height=0.35\textwidth]{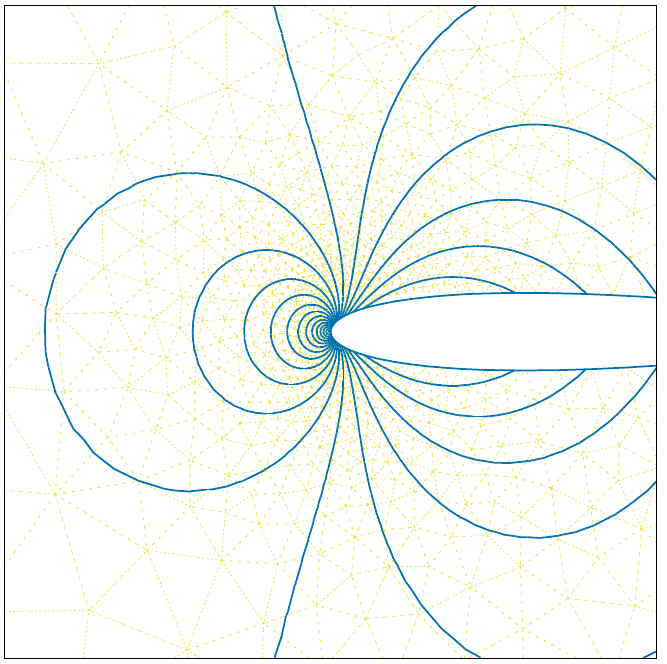}
    \caption{Subsonic inviscid flow around the NACA 0012 profile
      ($\Min = 0.5$, $\alpha = 0\degree$): 
      the computational mesh in the vicinity of the profile (left) and 
      the isolines of the first component of $\wwh$ (right).}
    \label{fig:euler_consistency_mesh}
  \end{center}
\end{figure}

\begin{figure}
  \begin{center}
    $\zzhi{1}$
    \hspace{25mm} $\zzhi{2}$
    \hspace{25mm} $\zzhi{3}$
    \hspace{25mm} $\zzhi{4}$
    
    \includegraphics[height=0.24\textwidth]{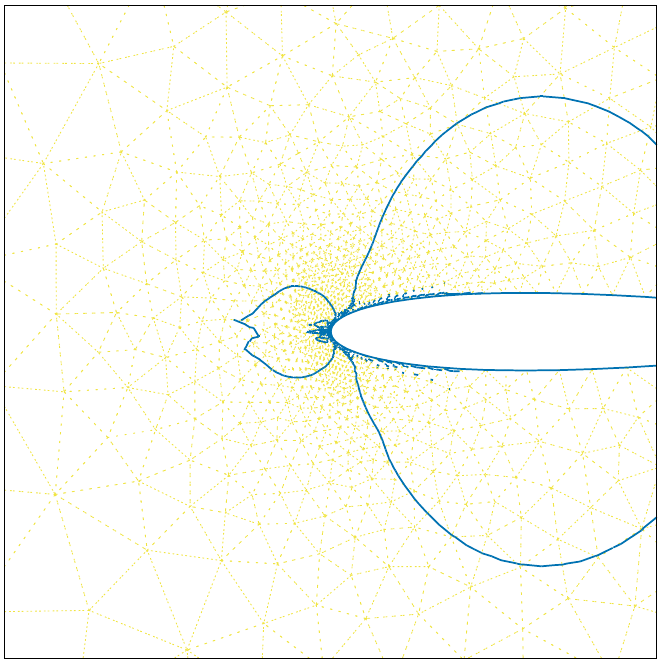}
    \includegraphics[height=0.24\textwidth]{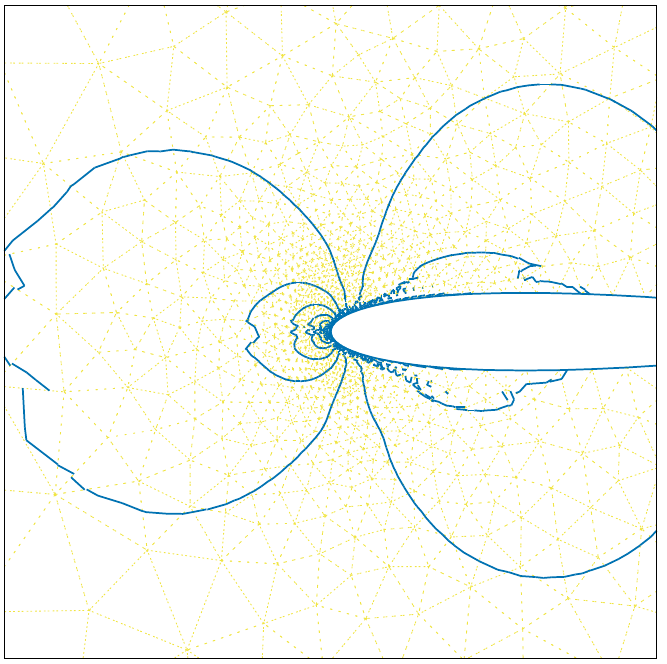}
    \includegraphics[height=0.24\textwidth]{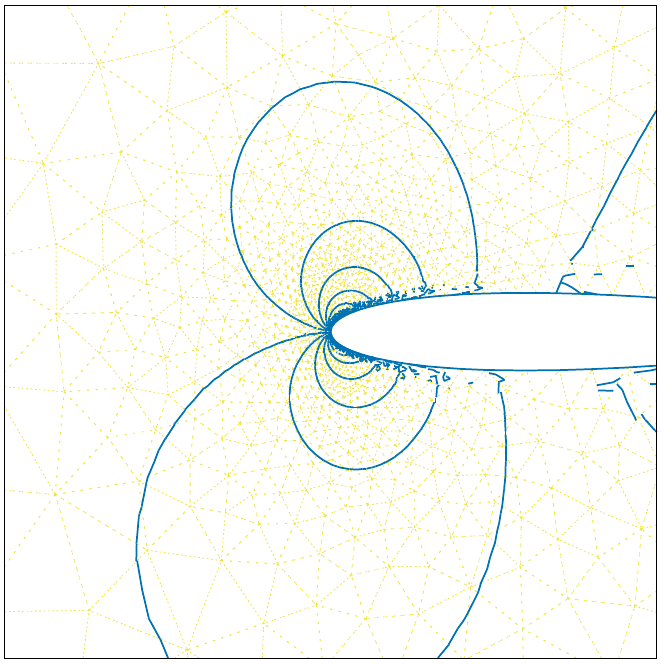}
    \includegraphics[height=0.24\textwidth]{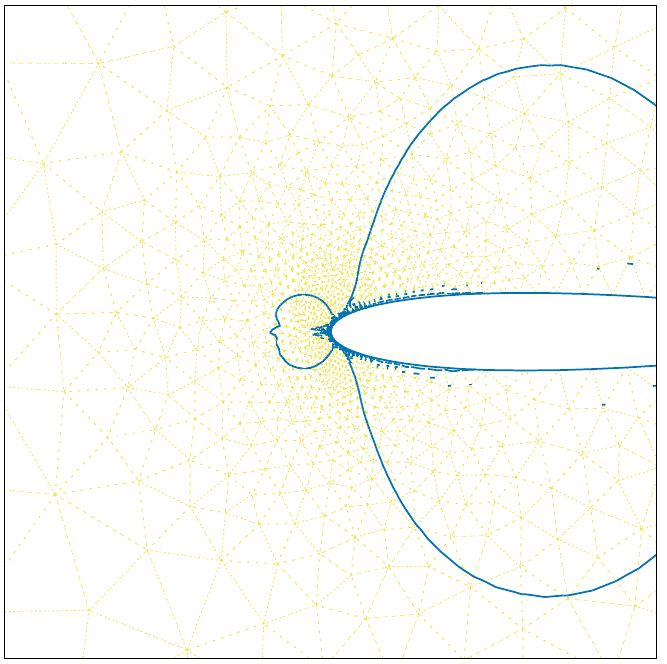}
    
    \includegraphics[height=0.24\textwidth]{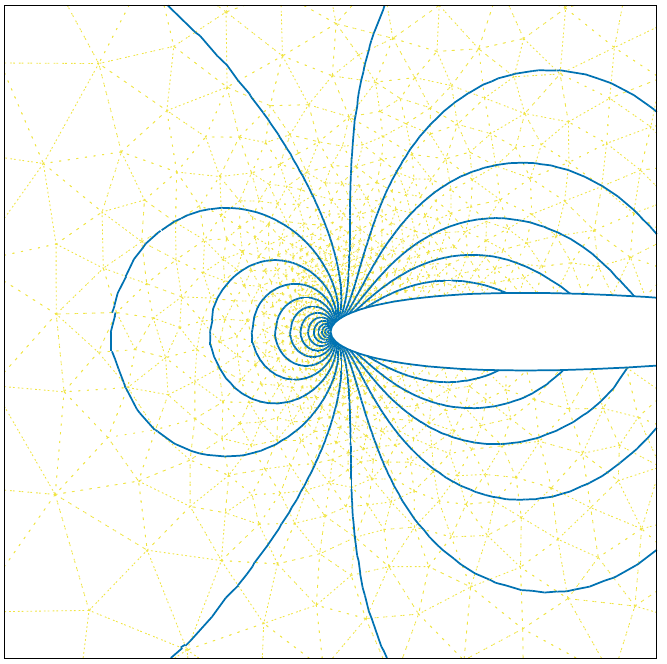}
    \includegraphics[height=0.24\textwidth]{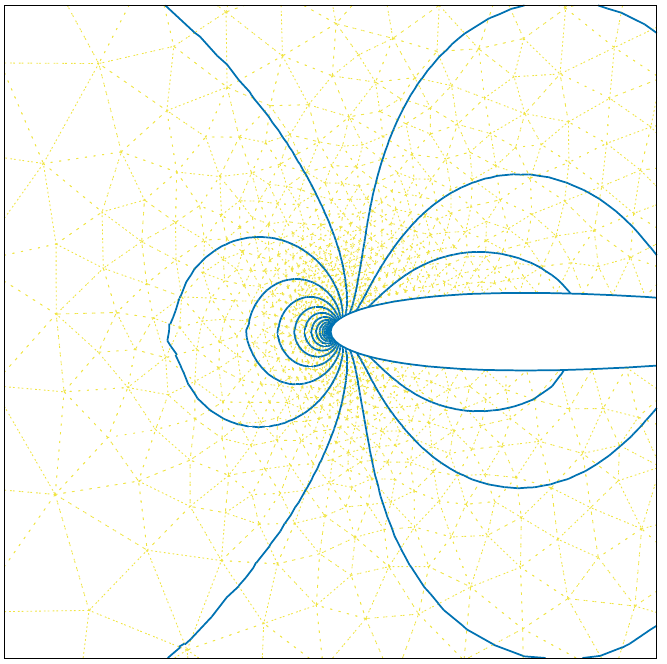}
    \includegraphics[height=0.24\textwidth]{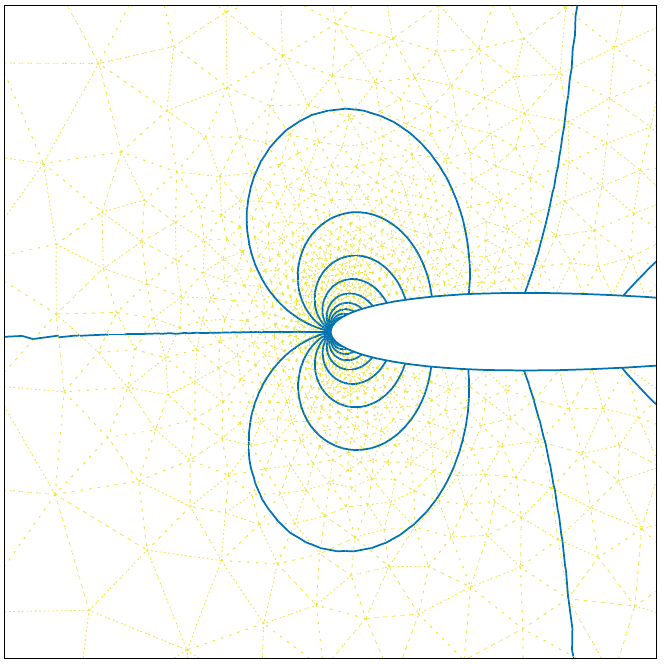}
    \includegraphics[height=0.24\textwidth]{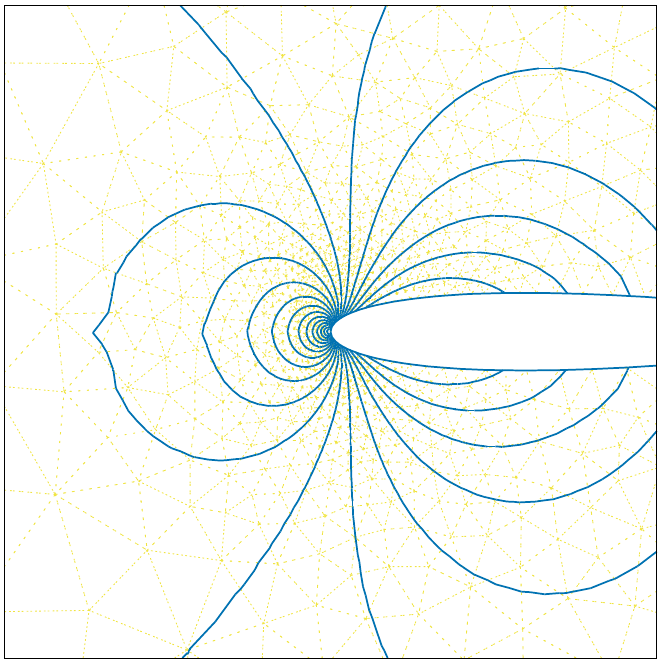}
    
    \caption{Subsonic inviscid flow around the NACA 0012 profile
      ($\Min = 0.5$, $\alpha = 0\degree$), $J=$ drag coeff.: 
      inconsistent (top) and consistent (bottom) discretization,
      the components of the discrete adjoint solutions
      $\zzh = (\zzhi{1}, \zzhi{2}, \zzhi{3}, \zzhi{4})$ are shown.
      %     convergence of the error estimate $\etaI=\etaI(\uhn,\zhn)$
      %     \wrt $\DoF$ (left) and \wrt the computational time (center)
      %     for all tested adapted strategies. Comparison of the error estimates
      %     $\etaI$ and $\etaII$ with the actual error $J(u-\uh)$ for the {\sf $hp$-AMA\_s2} method
      %     (right) with the target functionals
      %     $J_V$ (top row), $J_B$ (middle row) and $J_D$ (bottom row).}.
    }
    \label{fig:euler_consistency}
    %\\
  \end{center}
\end{figure}
 
\subsection{Subsonic flows}
In the following examples, we demonstrate the performance of the
anisotropic mesh adaptive Algorithm~\ref{alg:AMAhp}, from Section~\ref{sec:AMAhp},
namely its $h$- and $hp$-variants.

\subsubsection{Symmetric subsonic flow}
%% First, we focus on the decrease of the error measured with respect to the target functional. 
%% Therefore, we set
We consider again the flow with $\Min =0.5$, $\alpha =0\degree$ and the target functional
$\J(\ww)$ is  the drag coefficient according to \eqref{eul:cDL1}.
The exact value of the drag coefficient is $c_D = 0$. % (the flow under consideration is inviscid).
Figure~\ref{fig:drag:decrease} shows the decrease of the error of the target quantity
$\Jd(\wh) - \J(\ww)= \Jd(\wh)$ and the corresponding estimates
$\etaI$ and $\etaII$ (cf. \eqref{eul:etaI} and \eqref{eul:etaII})
\wrt $\DoF$
for the $h$- (with $p=2$) and $hp$-version of the mesh adaptive algorithm.
We observe that both $\etaI$ and $\etaII$ approximate the true error quite 
accurately although $\etaI$ underestimates the error slightly. 
We see that the $hp$-version is superior to the $h$-version, as expected.
Moreover, for the $hp$-variant, the error starts to stagnate at the level approximately
5E-07. We discuss this effect in Section~\ref{sec:subL}, where it is better to observe.

%with respect to both number of degrees of freedom (DOF) and computational time.
%We note that the computational times depicted on the right side of the figure are only indicative since our software 
%is not fully optimized for fast computations and the experiments were computed on a standard laptop, 
%but we observed at least $30\%$ speedup in the computations when the $hp$-method was used.
%In Figure \ref{fig:dragEtas} the local refinement indicators $\etaIK$ and $\etaIIK$ for the first, fourth and seventh (final) mesh are depicted 
%and in

Figure \ref{fig:dragHPmeshes} shows the details of the $hp$-meshes of the anisotropic
$hp$-adaptation after the 5th and 13th (the last) levels of adaptation. 
There is a strong $h$-refinement in the vicinity of the trailing edge due to the singularity,
and outside of this small region, the strong $p$-adaptation is performed since the solution
is sufficiently smooth.

\begin{figure}
\begin{center}
  \includegraphics[width=0.47\textwidth]{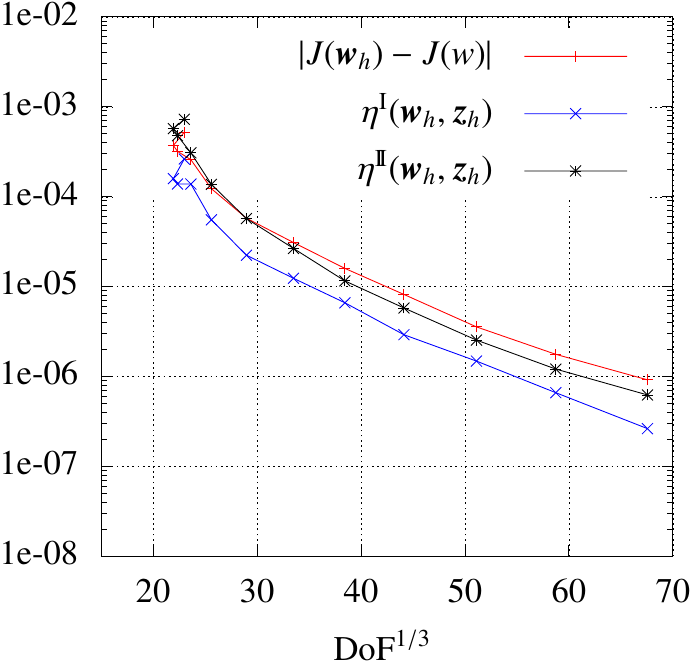}
  \hspace{0.03\textwidth}
  \includegraphics[width=0.47\textwidth]{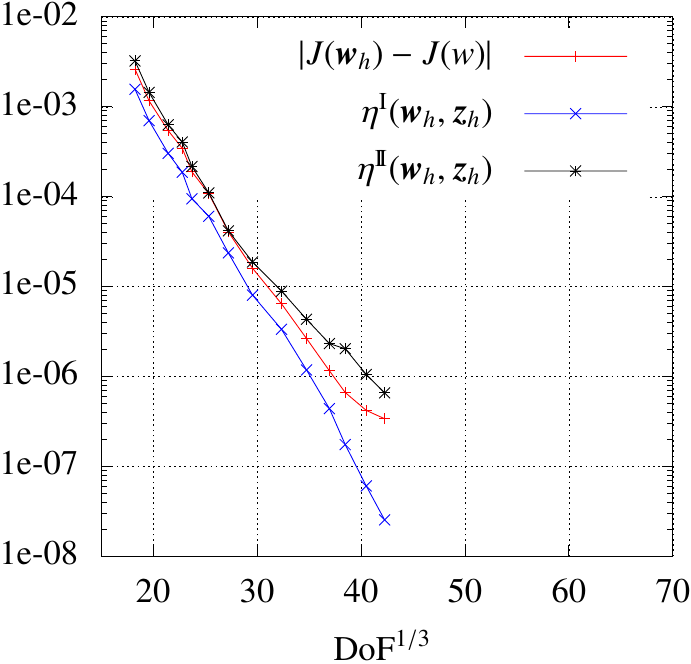}
  \caption{Subsonic inviscid flow around the NACA 0012 profile
    ($\Min = 0.5$, $\alpha = 0\degree$), $J=$ drag coeff.: 
   decrease of the error $J(\w) - J(\wwh)$ and the goal-oriented error estimates $\etaI$ and $\etaII$ 
   with respect to the cube root of DOF %(left) and the computational time (right) 
   for the $h$-refinement using $p=2$ DG approximations (left)
   and the $hp$-version (right).
   }
   \label{fig:drag:decrease}
\end{center}
\end{figure}

\begin{figure}
  \begin{center}

    \includegraphics[width=0.48\textwidth]{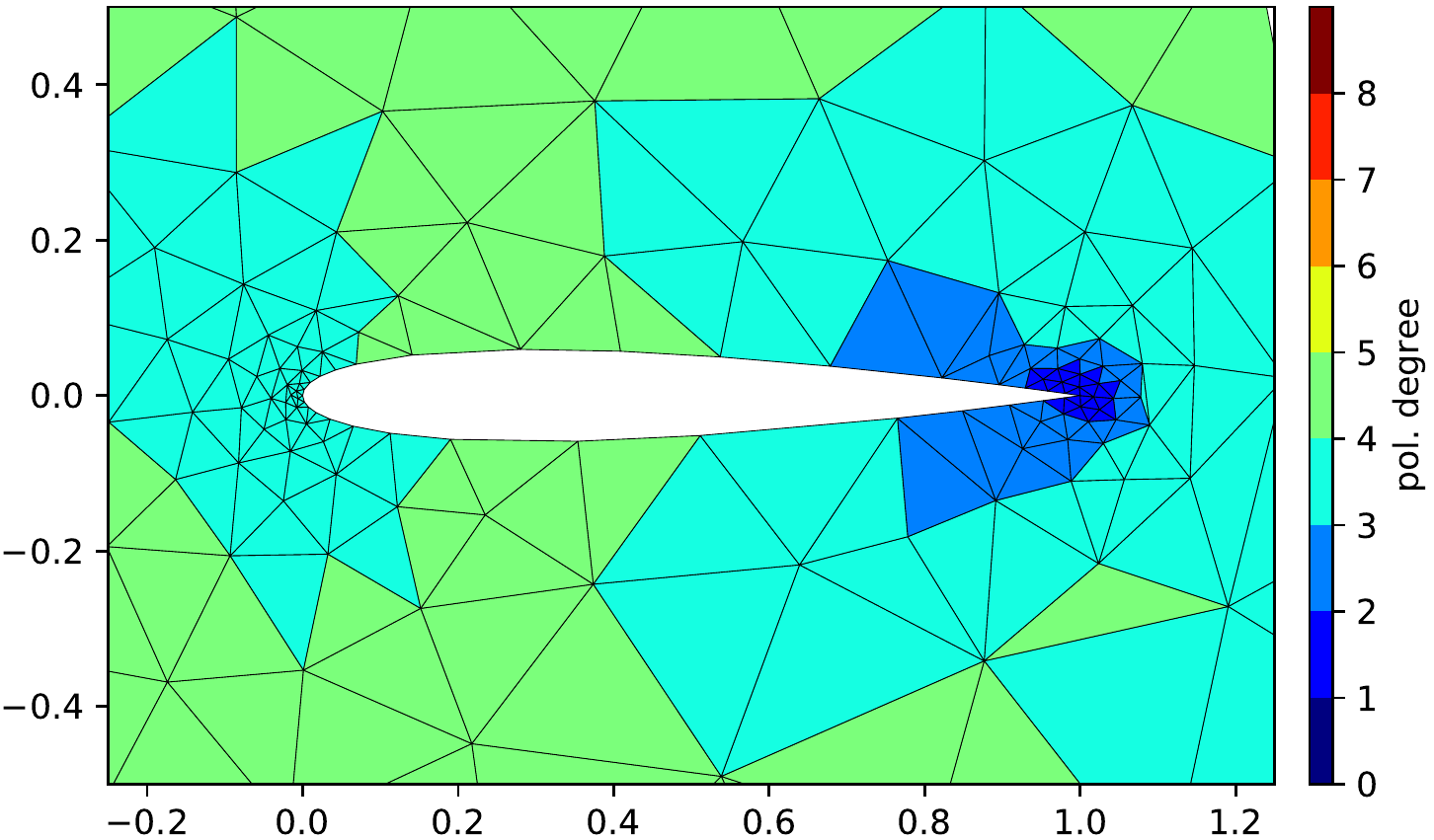}
    \hspace{1mm}
    \includegraphics[width=0.48\textwidth]{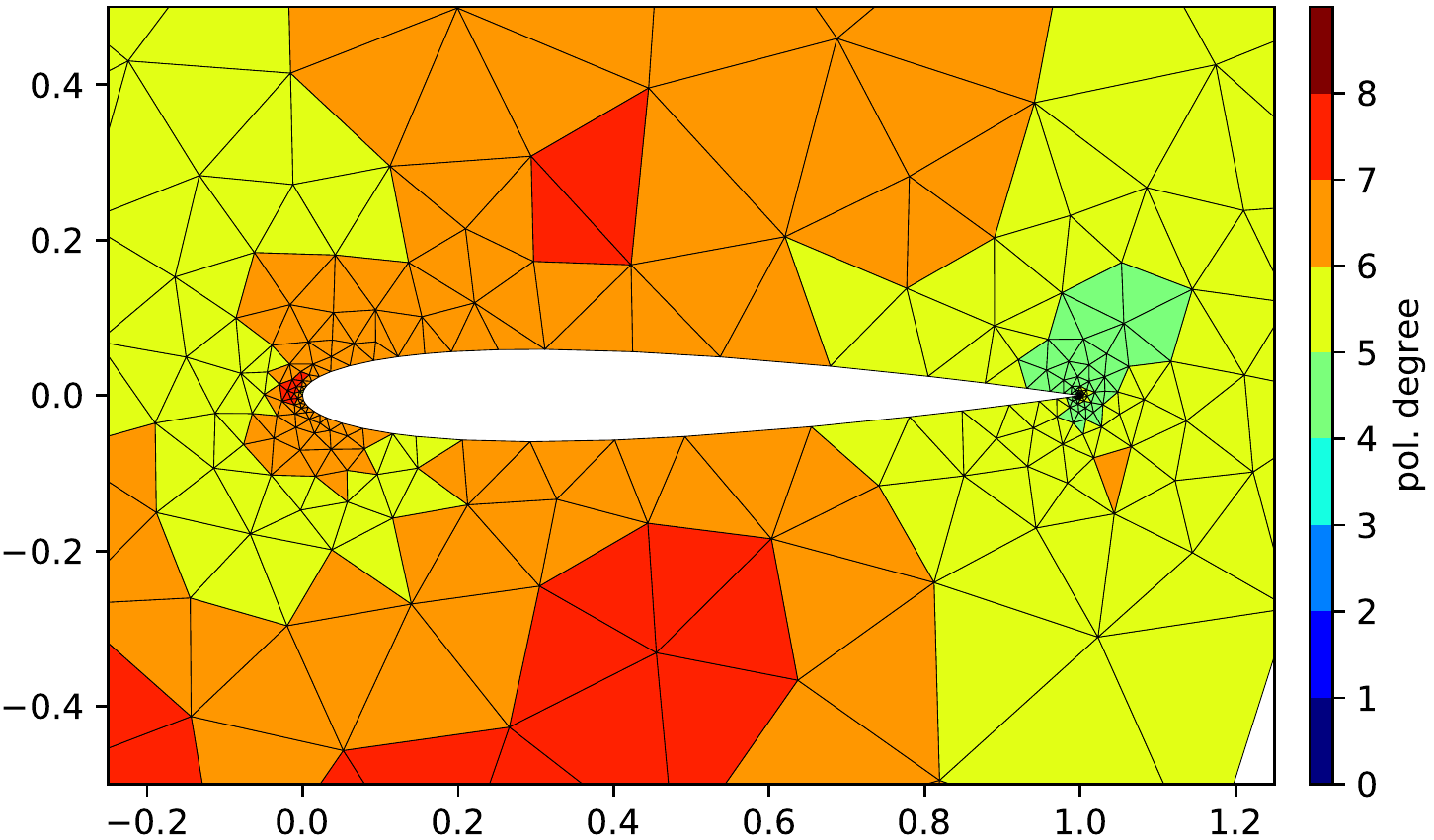}
    
    \includegraphics[width=0.48\textwidth]{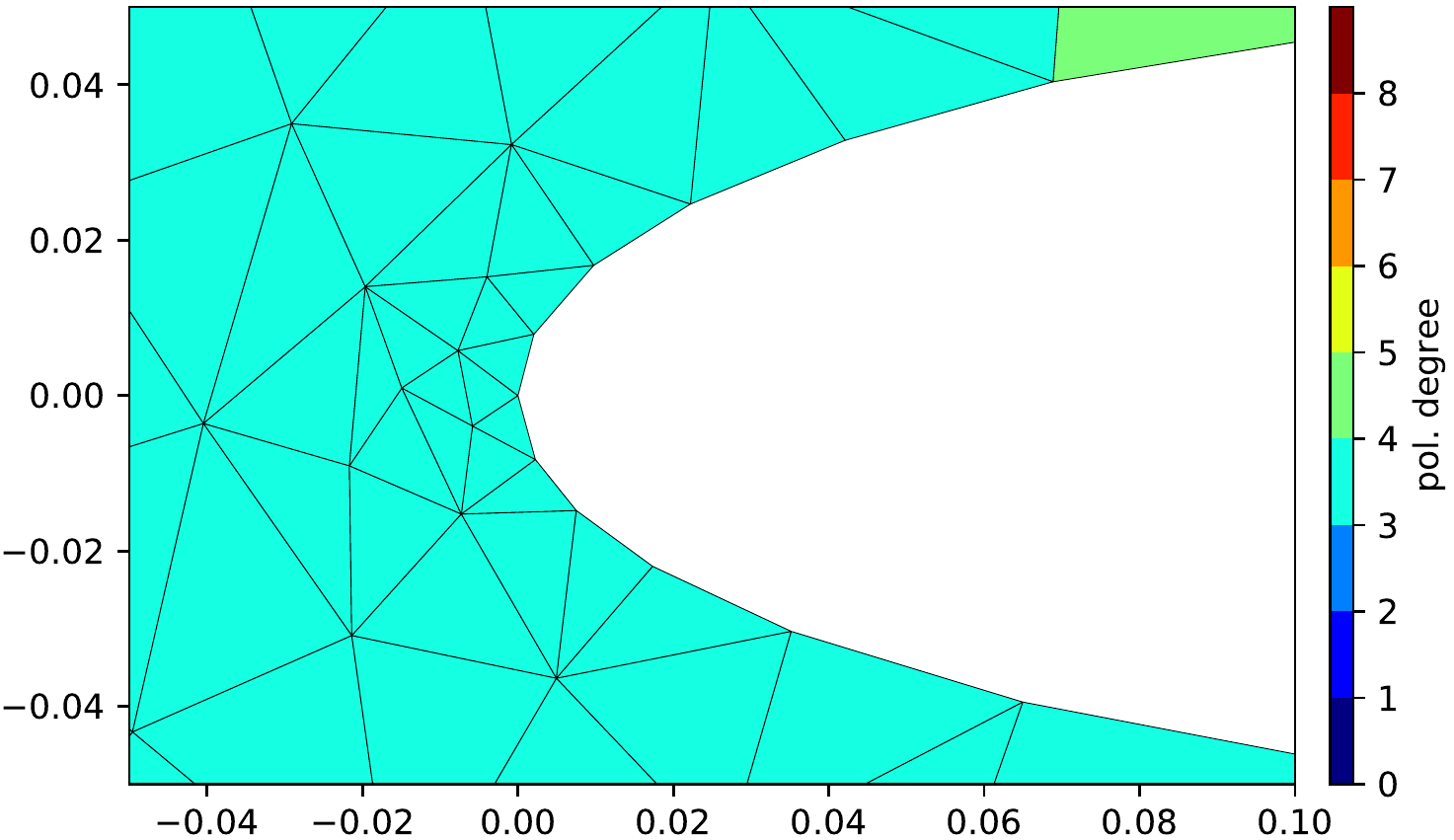}
    \hspace{1mm}
    \includegraphics[width=0.48\textwidth]{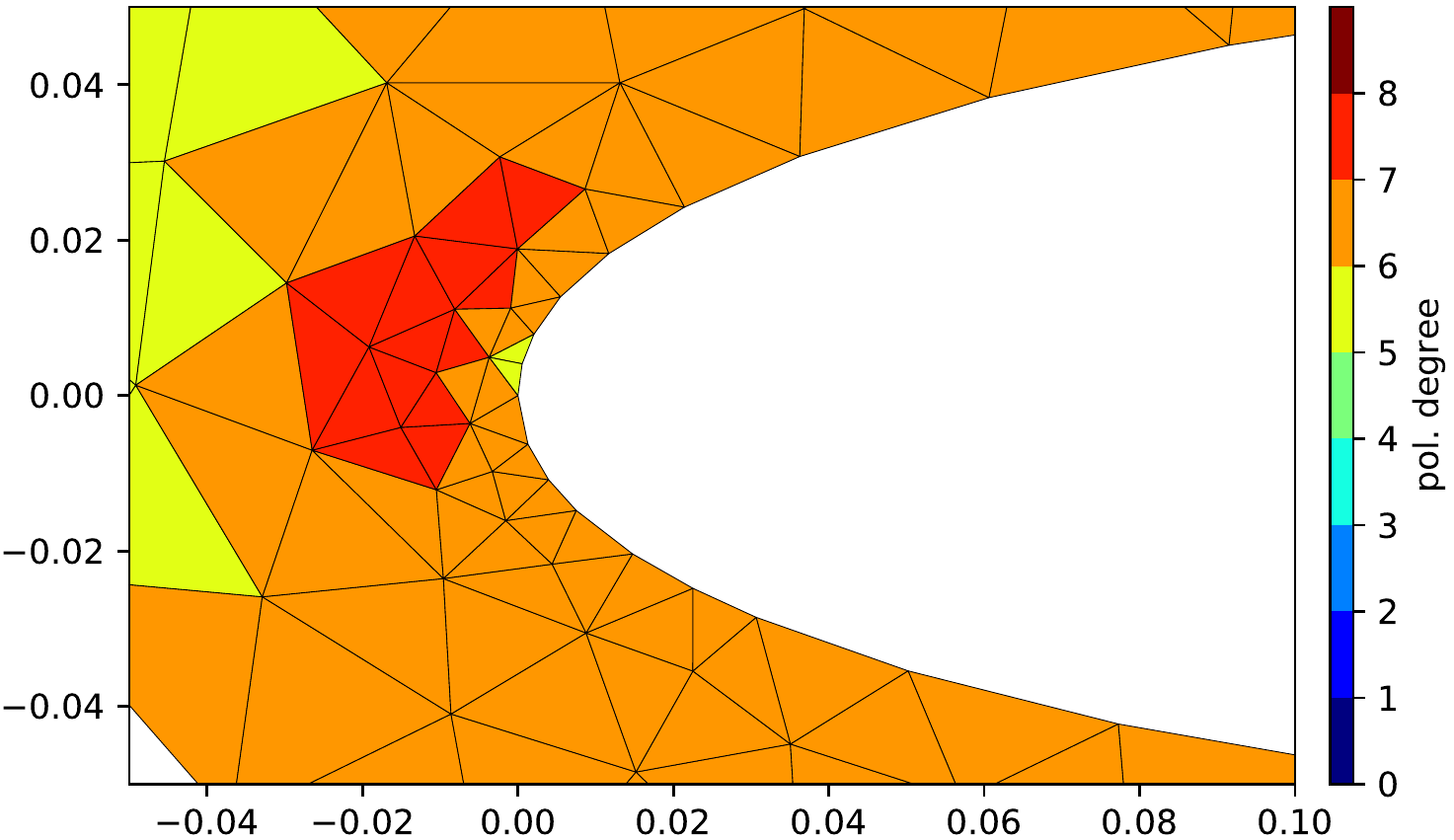}
    
    \includegraphics[width=0.48\textwidth]{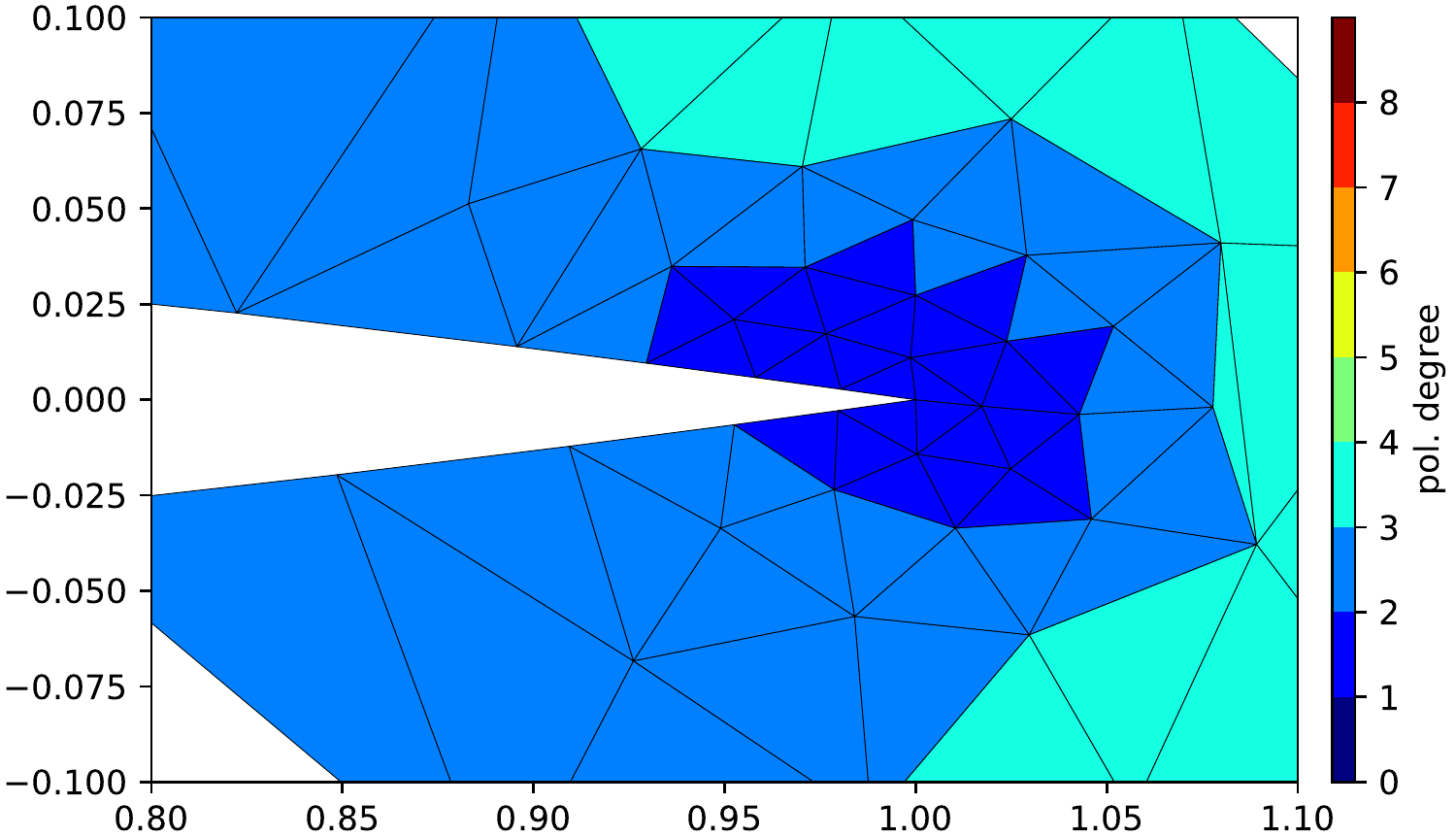}
    \hspace{1mm}
    \includegraphics[width=0.48\textwidth]{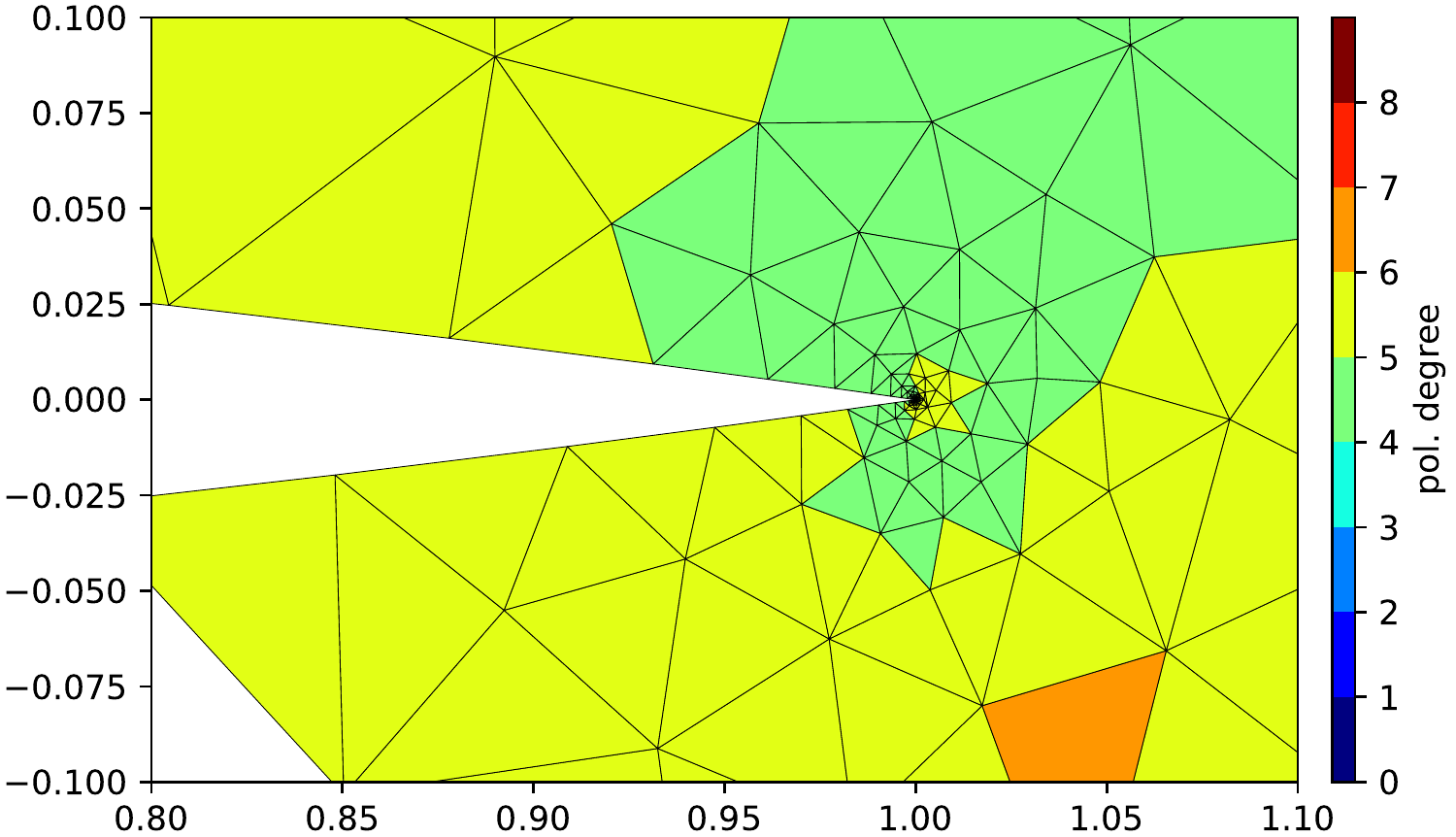}
    
    \caption{Subsonic inviscid flow around the NACA 0012 profile
      ($\Min = 0.5$, $\alpha = 0\degree$), $J=$ drag coeff.: 
      local polynomial degrees after the 5th (left) and 13th (right) levels of 
      the anisotropic $hp$-mesh adaptation, the whole profile (top) 
      and zooms of the leading (middle) and trailing (bottom) edge of the profile.
    }
   \label{fig:dragHPmeshes}
   \end{center}
\end{figure}

\subsubsection{Non-symmetric subsonic flow}
\label{sec:subL}
We consider the flow with $\Min =0.5$ and $\alpha =1.25\degree$
and the target functional is the drag as well as lift coefficients.
Whereas the exact value of $c_D$ is again zero,
the exact value of the lift coefficient has to be computed experimentally.
We use the reference value of $c_L^{\mathrm{ref}}= 1.757 \cdot 10^{-1} \pm 10^{-4}$
achieved by the $hp$-adaptive algorithm.

Figure~\ref{fig:suaD} shows the decrease of the error of the drag 
coefficient and their estimates \wrt $\DoF$.
We observe that whereas both estimates are decreasing, the exact error stagnates at the
level slightly below 1E-4. It means that the drag coefficient does not converge to
the exact value $J(\w)=0$
but to a positive value $c_D^*$. This effect was investigated in details in
\cite{VassbergJameson_JA10} using several codes with a strong global refinement.
Each of the tested code gave a small positive limit value $c_D^*$ (obtained by the Richardson
extrapolation). Figure~\ref{fig:suaD} shows also the quantity
$|J(\wh)-c_D^*|$ with $c_D^*= 6.8\cdot 10^{-5}$ which already converges as expected.

Furthermore, Figure~\ref{fig:suaL} shows the decrease of the error of the lift 
coefficient and their estimates \wrt $\DoF$.
We see that the error estimates work worse than in the previous case
-- $\etaI$ underestimates the error 
and, quite the other way, $\etaII$ overestimates it almost ten times. 
This may be caused by the weaker regularity of the adjoint solution for the lift coefficient. 
In order to support this conjecture, we present Figure~\ref{fig:dragVSlift}, which
compares the first component of the adjoint solutions $\zzh$ and the
corresponding $hp$-meshes for the drag coefficient  and lift coefficients.
These results indicate that
\begin{compactitem}
\item[(i)] the adjoint problem 
  for the drag coefficient is quite smooth and then $p_K$ are
  high for $K$ in the surrounding of the profile $\gomW$,
\item [(ii)] the adjoint problem for the lift coefficient has less regularity due to
  ``boundary layers'' along the profile
  and consequently, the strong $h$-refinement with low polynomial degree $p_K$ is presented
  for $K$ close to $\gomW$.
\end{compactitem}
We remark that in the majority of articles on goal-oriented error estimates 
for the Euler equations, e.g.,
\cite{Hartmann2005Role,Hartmann2006Derivation,Hartmann2007Adjoint,Hartmann2015Generalized}, 
the numerical experiments are performed only for the drag coefficient.
We have found only one experiment 
with the lift coefficient in \cite{Sharbatdar2018Mesh} 
for the transonic flow around the NACA 0012 profile.
% which we repeated, see Section \ref{sec:shock_capt}, with similar results. 

\begin{figure}
  \begin{center}
  \includegraphics[width=0.45\textwidth]{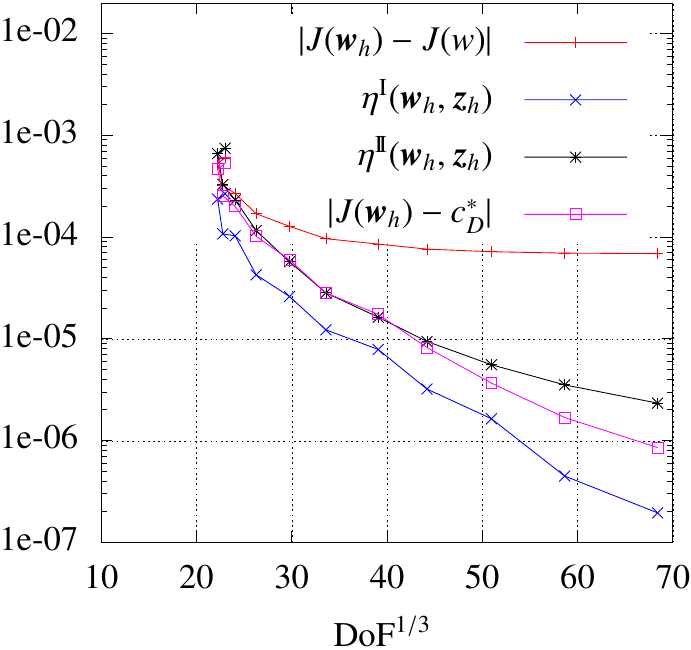}
  \hspace{0.02\textwidth}
  \includegraphics[width=0.45\textwidth]{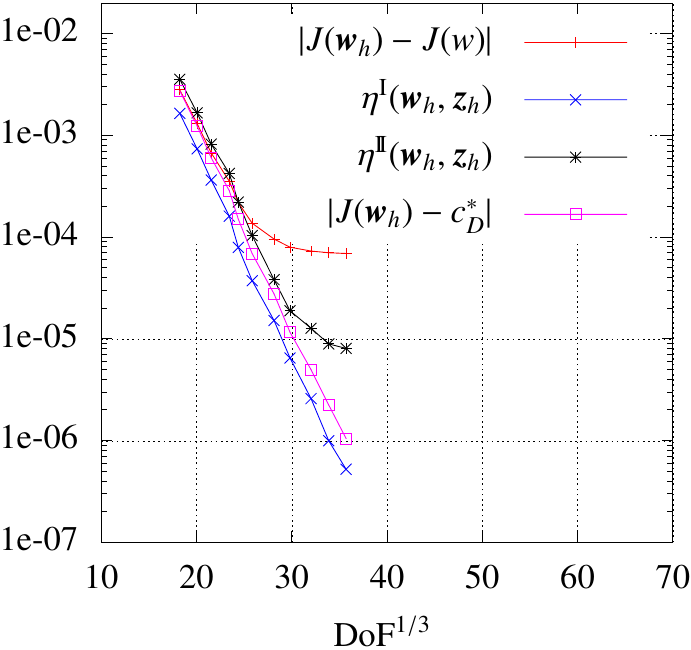}

   \caption{Subsonic inviscid flow around the NACA 0012 profile
     ($\Min = 0.5$, $\alpha = 1.25\degree$), $J=$ drag coeff.:
     decrease of the error $J(\w) - J(\wwh)$ and the goal-oriented error estimates
     $\etaI$ and $\etaII$ 
     with respect to the cube root of $\DoF$ %(left) and the computational time (right)
     the $h$-refinement using $p=2$ DG approximations (left)
     and the $hp$-version (right). }
   \label{fig:suaD}
   \end{center}
\end{figure}

\begin{figure}
  \begin{center}
  \includegraphics[width=0.45\textwidth]{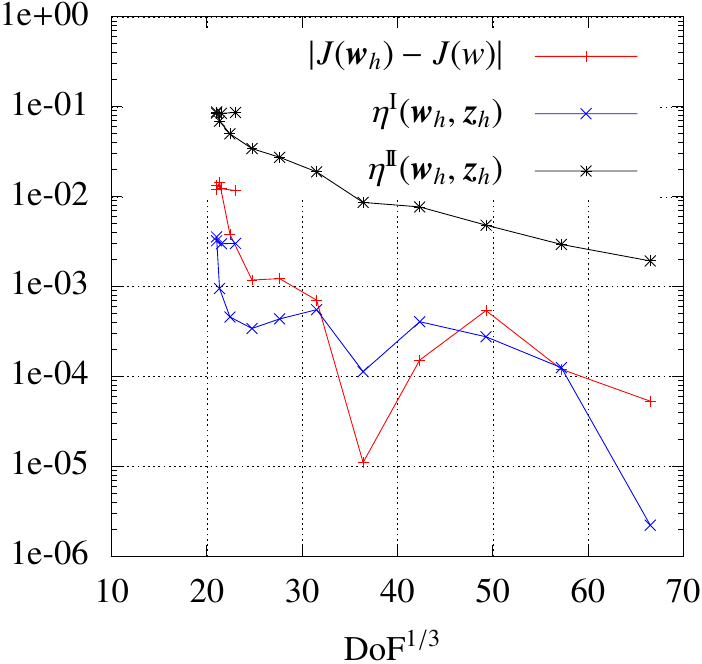}
  \hspace{0.02\textwidth}
  \includegraphics[width=0.45\textwidth]{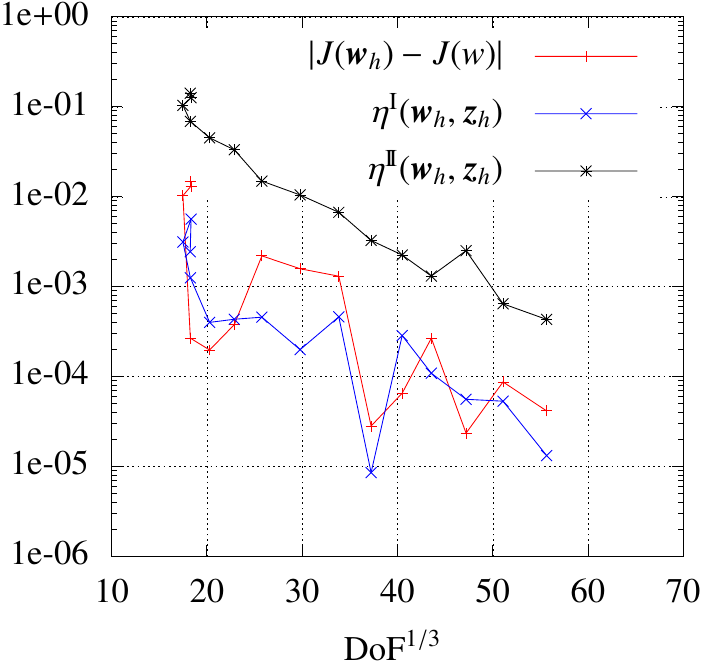}
   \caption{Subsonic inviscid flow around the NACA 0012 profile
     ($\Min = 0.5$, $\alpha = 1.25\degree$), $J=$ lift coeff.: 
     decrease of the error $J(\w) - J(\wwh)$ and the goal-oriented error estimates
     $\etaI$ and $\etaII$ 
     with respect to the cube root of $\DoF$ %(left) and the computational time (right)
     the $h$-refinement using $p=2$ DG approximations (left)
     and the $hp$-version (right). }
   \label{fig:suaL}
   \end{center}
\end{figure}

\begin{figure}
  \begin{center}
    \vertical{\hspace{5mm} drag coefficient}
    \includegraphics[width=0.48\textwidth]{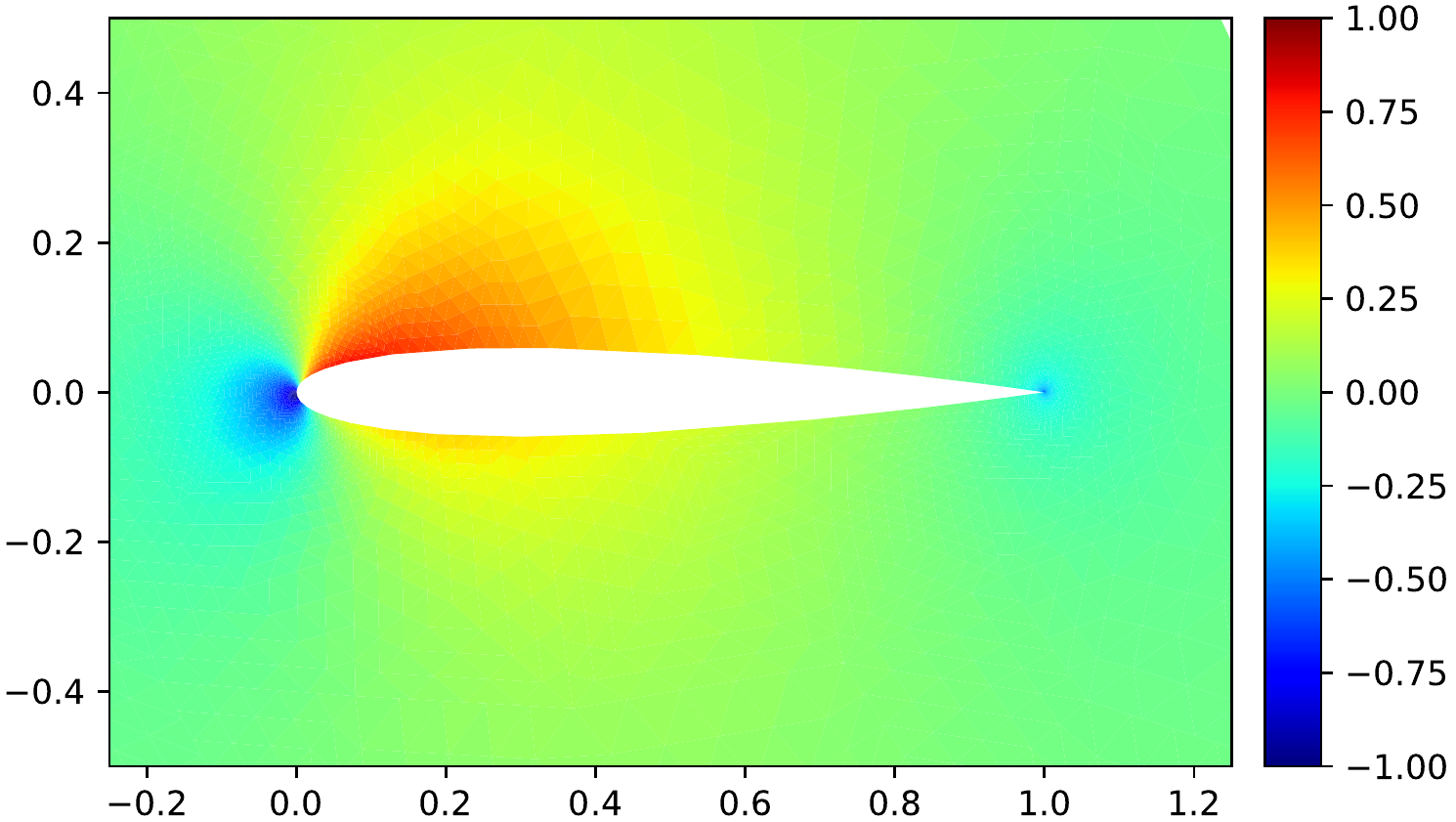}
    \hspace{0.02\textwidth}
    \includegraphics[width=0.45\textwidth]{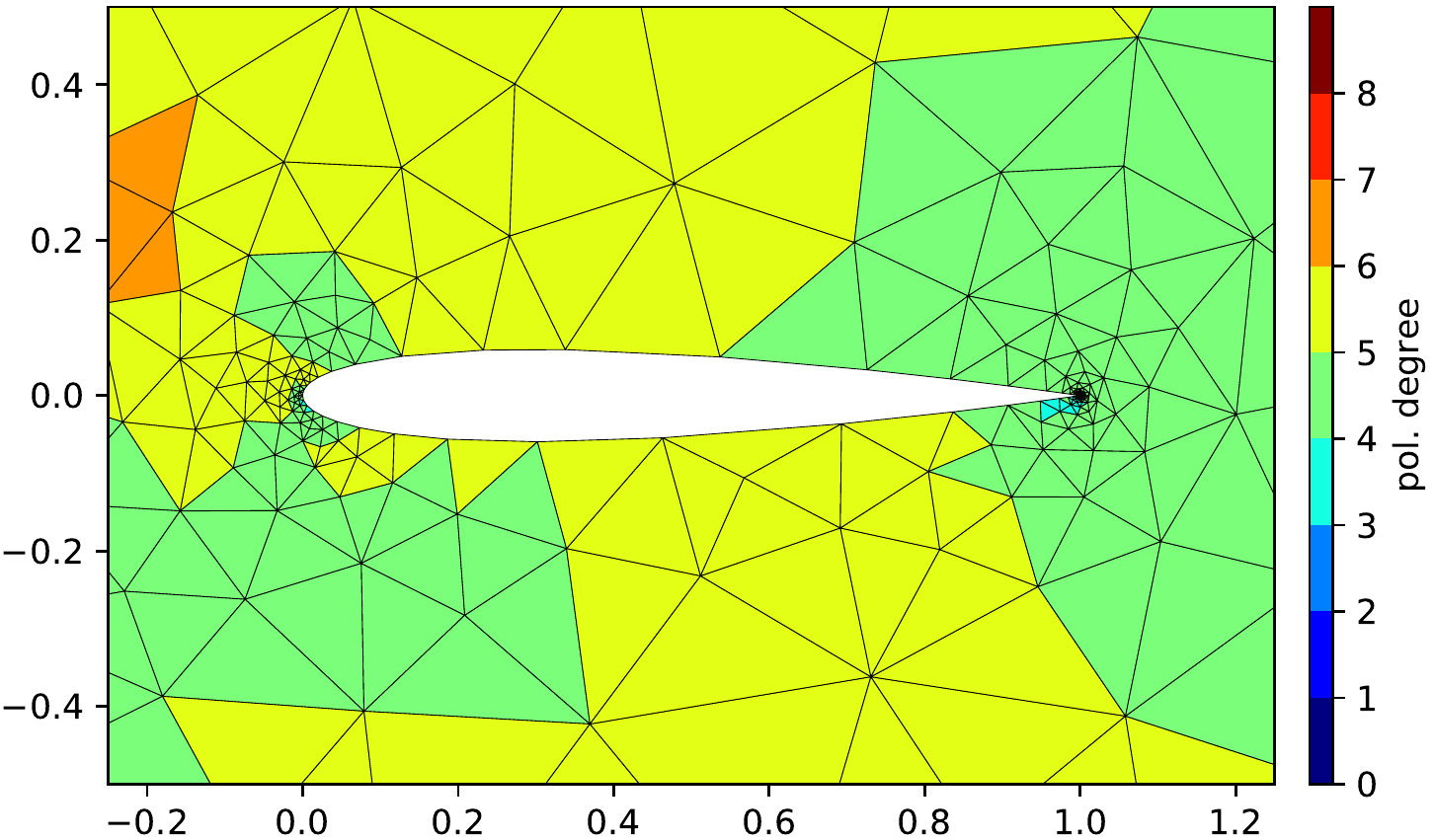}

    \vspace{3mm}
    
    \vertical{\hspace{5mm} lift coefficient}
    \includegraphics[width=0.48\textwidth]{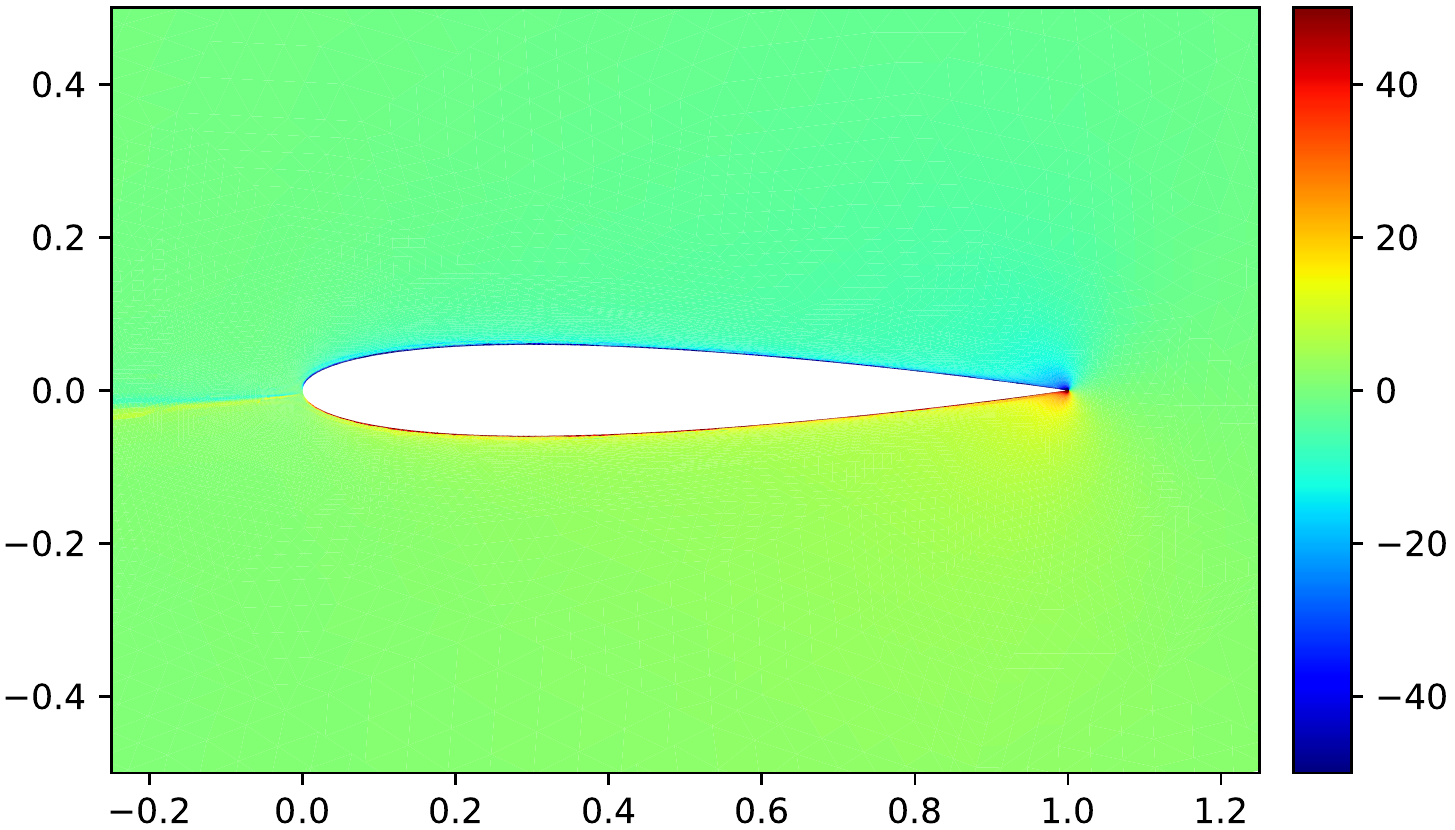}
    \hspace{0.02\textwidth}
    \includegraphics[width=0.45\textwidth]{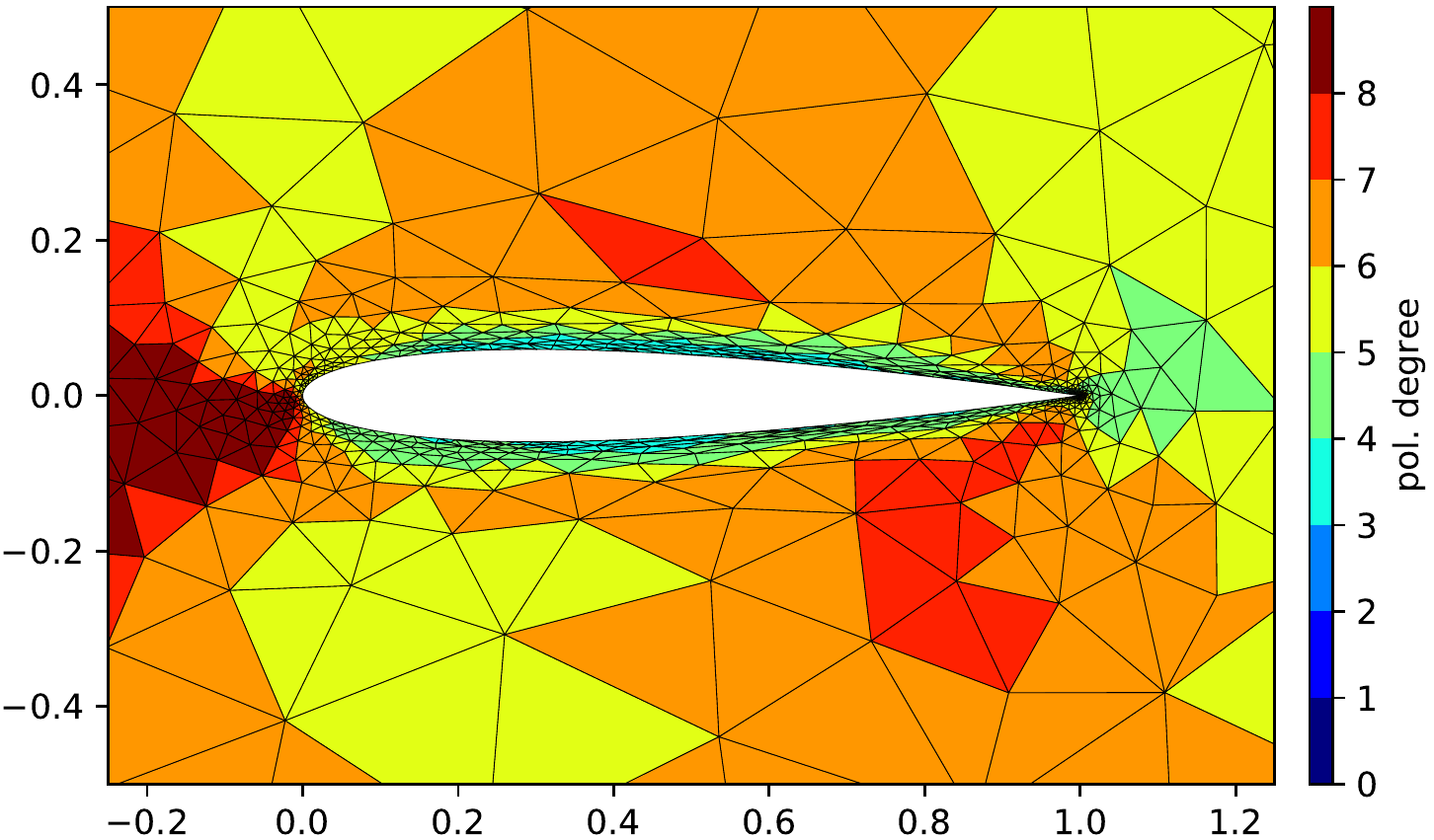}
    \caption{Subsonic inviscid flow around the NACA 0012 profile
      ($\Min = 0.5$, $\alpha = 1.25\degree$): 
      the first component of the discrete adjoint solution (left) on the final $hp-$mesh
      (right) for $J$ equal to the drag (top)  and lift (bottom) coefficients.}
   \label{fig:dragVSlift}
   \end{center}
\end{figure}

\subsection{Transonic flow} 
We consider the flow with  $\Min = 0.8$ and $\alpha= 1.25\degree$ which leads to two shock waves.
We apply the shock-capturing technique based on the artificial viscosity
whose amount is given by the jump indicator, see \cite[Section~8.5]{DGM-book} or
\cite{feikuc2007}.
%The first one, lying on the upper side of the profile is stronger than the other one on the lower side.
The target functional $\J$ is both drag and lift coefficients,
the reference values $c_D^{\mathrm{ref}} = 2.135 \times 10^{-2}$
and  $c_L^{\mathrm{ref}} = 3.33 \times 10^{-1}$ were computed by the $hp$-anisotropic 
adaptation method. % on a mesh with approximately $10^5$ degrees of freedom. 

%% We performed several iterations of the $h$-adaptation algorithm for the fixed polynomial
%% degree $p=1$ 
%% and the for the $hp$-version of the method.
%% We note that there is no point in setting globally $p>1$ -
%% due to the the low regularity of the exact solution. 
%% Figure \ref{fig:trans-drag:hpmeshes} shows the initial and final $hp$-meshes
%% obtained by the $hp$-adaptation method. We observe that the minimal polynomial degree $p=1$
%% is preserved in surrounding of both shock waves.  
%% Further, Figure \ref{fig:trans-drag:sol} shows the first component of
%% the primal (left) and adjoint (right) 
%% discrete solutions the vicinity of the profile on the final mesh
%% using the $hp$-adaptation.
%% Both shock waves in the primal solution are clearly distinguishable.  

Figure~\ref{fig:trans-drag:decrease} shows the decrease of the error of the target quantity
and the error estimates $\etaI$ and $\etaII$  \wrt $\DoF$.
For both target functionals, $\etaI$ underestimates and $\etaII$ overestimates the true error by
a factor at most $10$. We suppose that such overestimation is caused by 
the high-order reconstruction from Section~\ref{sec:nonlReconstruct}, which 
is not sufficiently accurate for problems having discontinuous solution.
Moreover, Figure~\ref{fig:trans} shows the final $hp$-grids and the corresponding
distribution of the Mach number and the first component of the adjoint solution.
A strong $h$-refinement with low polynomial approximation degrees along
both shock waves is observed. A sharp capturing of both waves are easily to see.

%% Figure \ref{fig:trans-drag:hpEtas} pictures the local error indicators $\etaIK$ and $\etaIIK$ 
%% on the mesh for the $h$- and $hp$-mesh adaptation.
%% We see the final mesh obtained by the $h$-method is strongly refined in
%% the surrounding of the upper shock wave 
%% while the refinement around the lower one is only slight. 
%% Moreover, we can observe the difference between  local error indicators $\etaIK$ and $\etaIIK$.
%% %Similar performance can be observed in Figure \ref{fig:trans-drag:hpEtas} for the $hp$-adaptation.
%% We note that the same experiment was carried out in \cite{Sharbatdar2018Mesh} with similar results.

\begin{figure}
\begin{center}
  \includegraphics[width=0.47\textwidth]{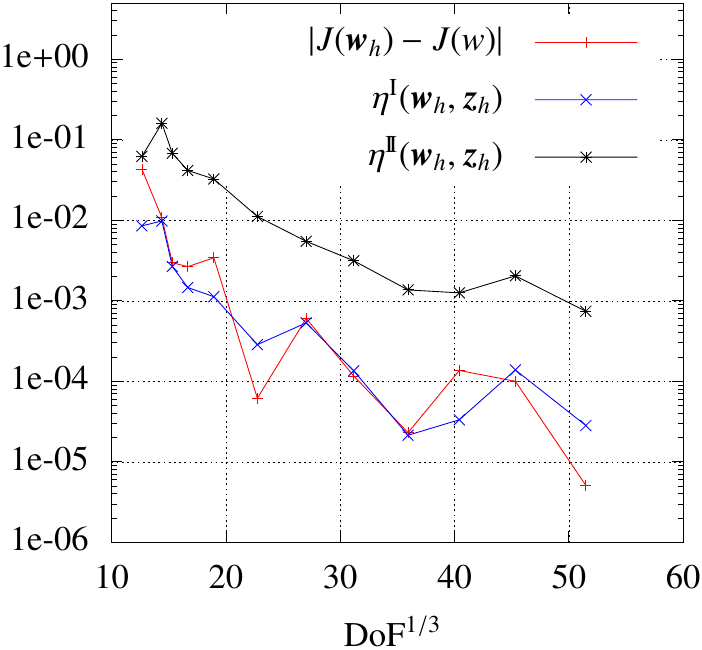}
  \hspace{0.03\textwidth}
  \includegraphics[width=0.47\textwidth]{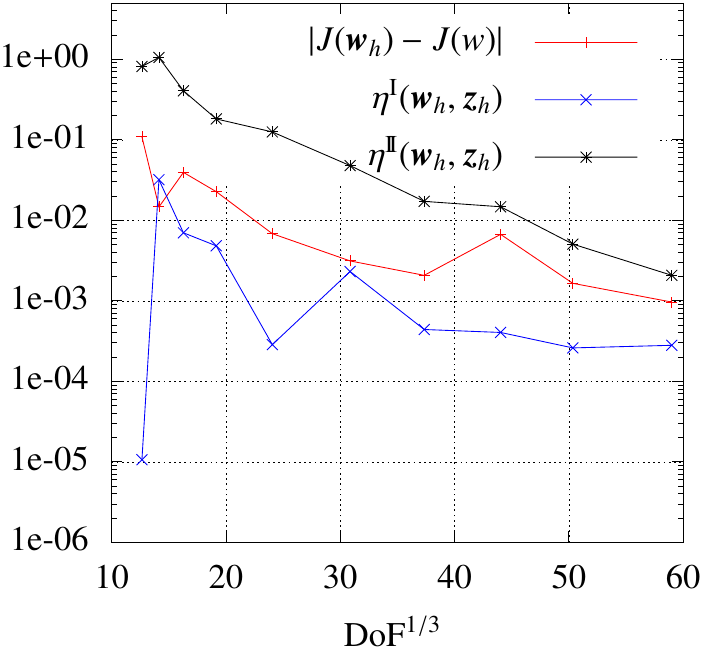}
     \caption{Transonic inviscid flow around the NACA 0012 profile
       ($\Min = 0.8$, $\alpha = 1.25\degree$): 
       decrease of the error $J(\w) - J(\wwh)$ and the goal-oriented error
       estimates $\etaI$ and $\etaII$  
       for $J$ equal to 
       the drag (left) and lift (right) coefficients
       w.r.t. the cube root of DOF.} % (left) and the computational time (right)
     %for the piecewise linear (left) and $hp$ (right) DG approximations.}   
   \label{fig:trans-drag:decrease}
   \end{center}
\end{figure}

\begin{figure}
  \begin{center}

    $J(u)$=drag coefficient \hspace{30mm} $J(u)$=lift coefficient \qquad \qquad

    \vertical{\hspace{12mm}$hp$-mesh}
  \includegraphics[width=0.46\textwidth]{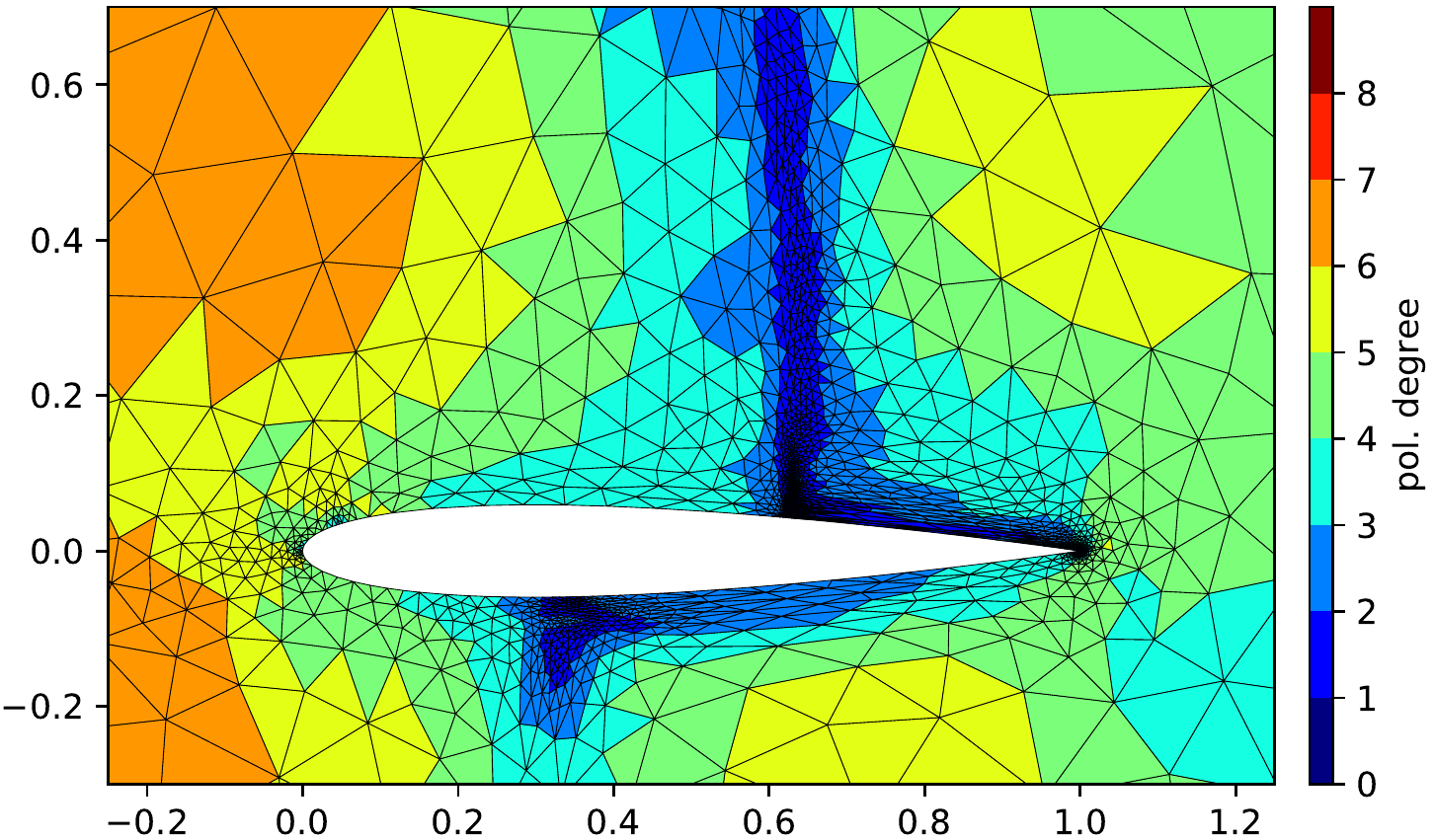}
  \hspace{1mm}
  \includegraphics[width=0.46\textwidth]{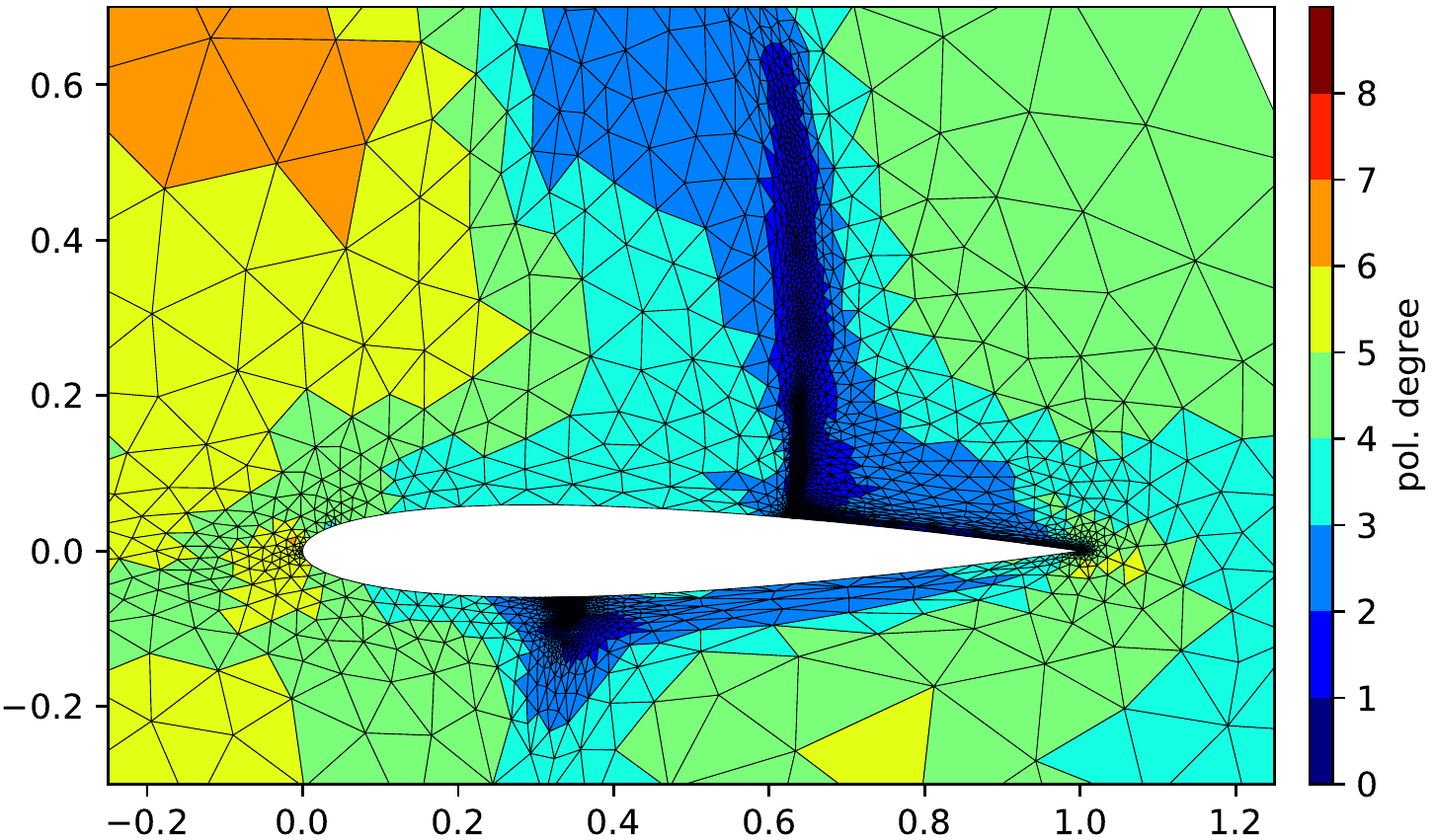}

  \vspace{2mm}

  \vertical{\hspace{7mm} Mach number}
  \includegraphics[width=0.46\textwidth]{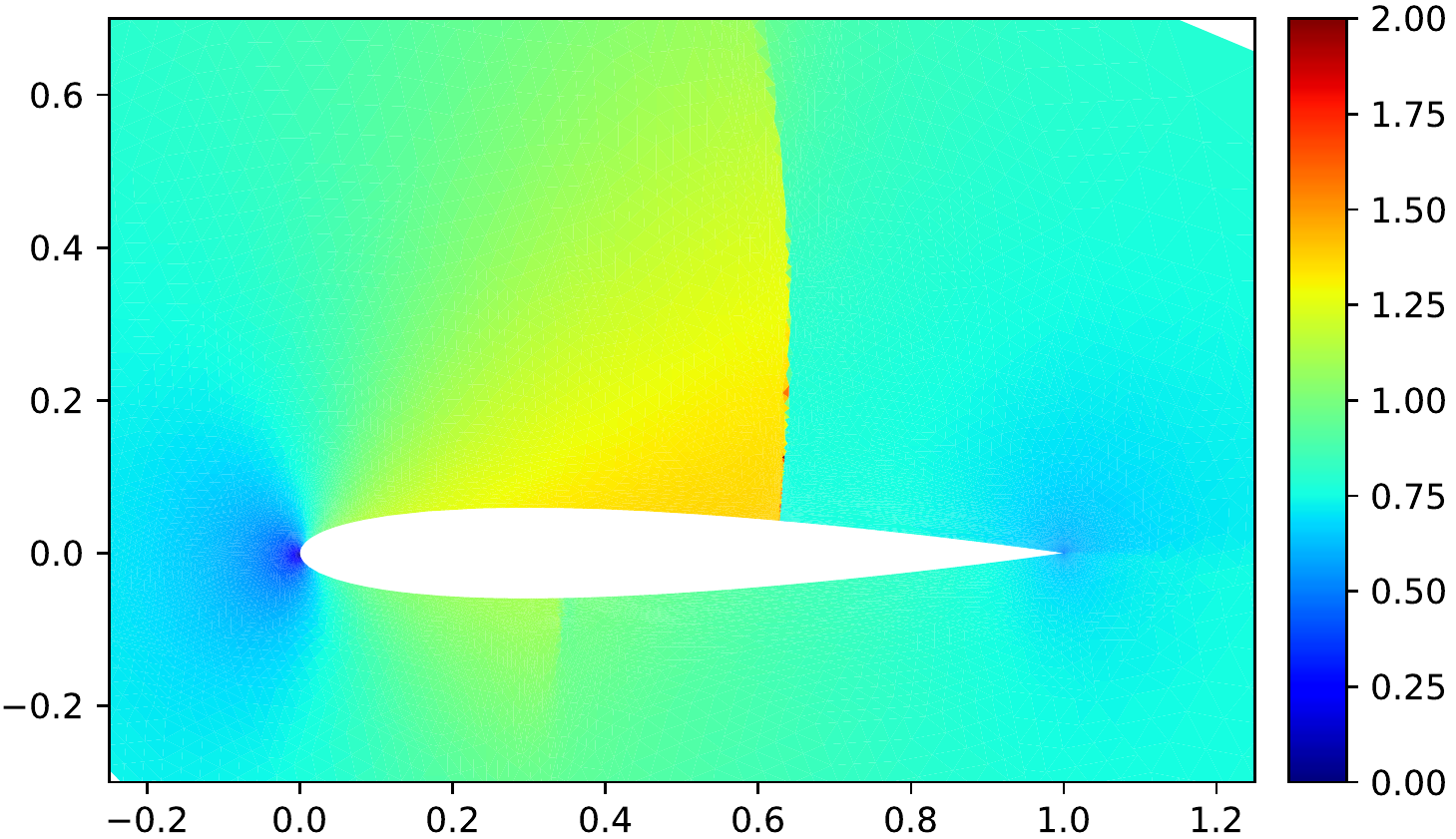}
  \hspace{1mm}
  \includegraphics[width=0.46\textwidth]{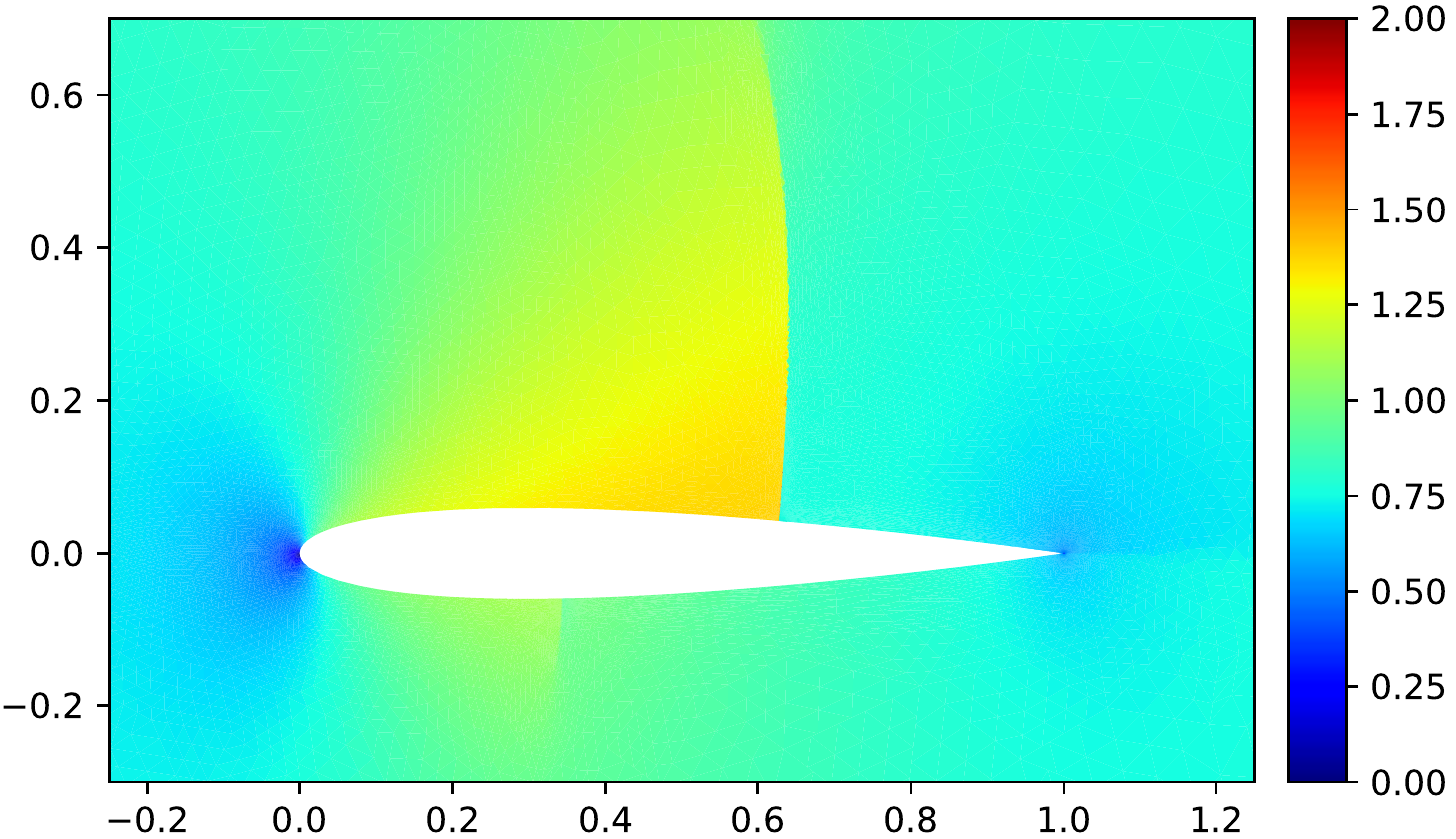}

  \vspace{2mm}

  \vertical{\hspace{1mm}first component of $\zzh$}
  \includegraphics[width=0.46\textwidth]{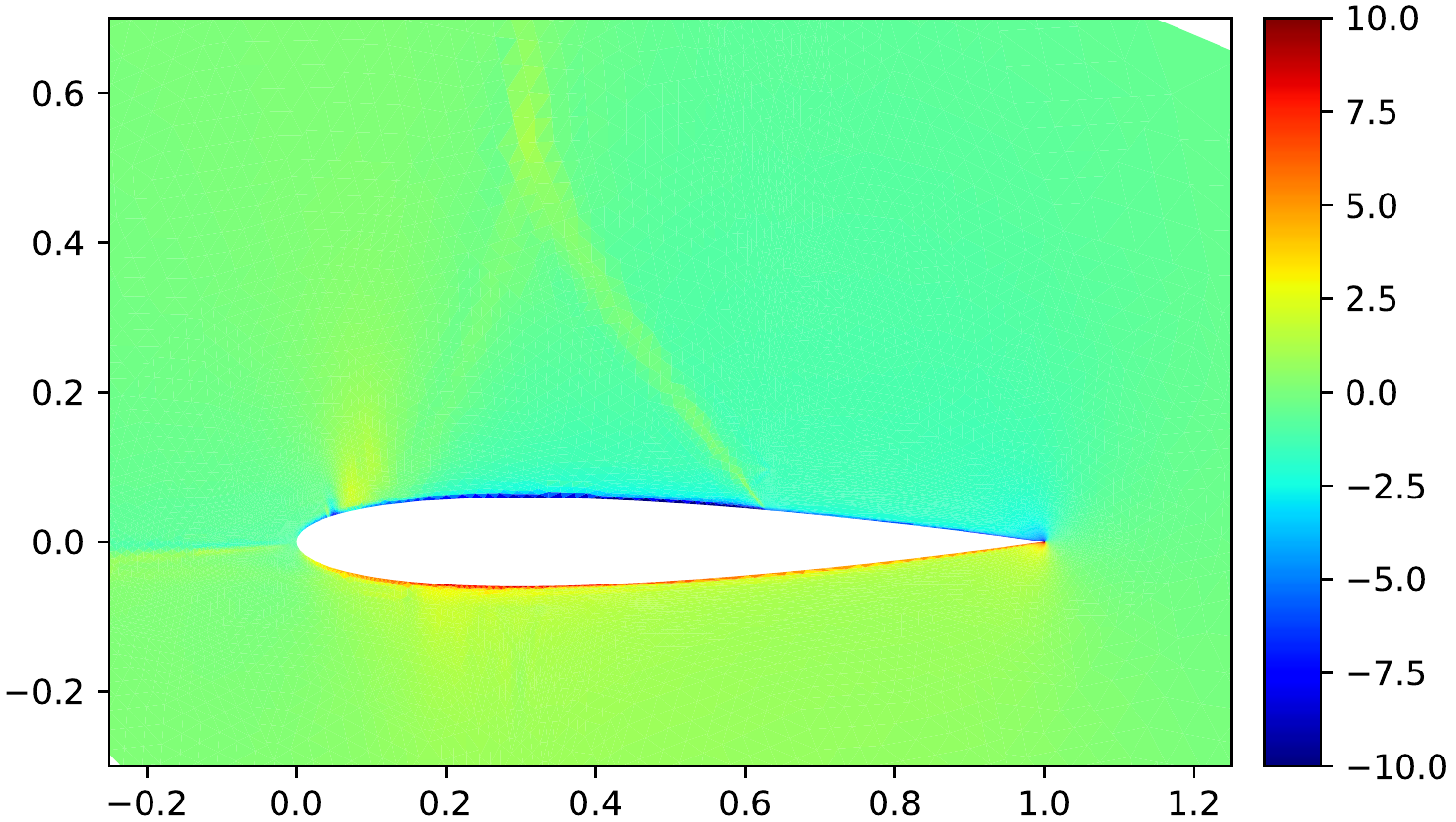}
  \hspace{1mm}
  \includegraphics[width=0.46\textwidth]{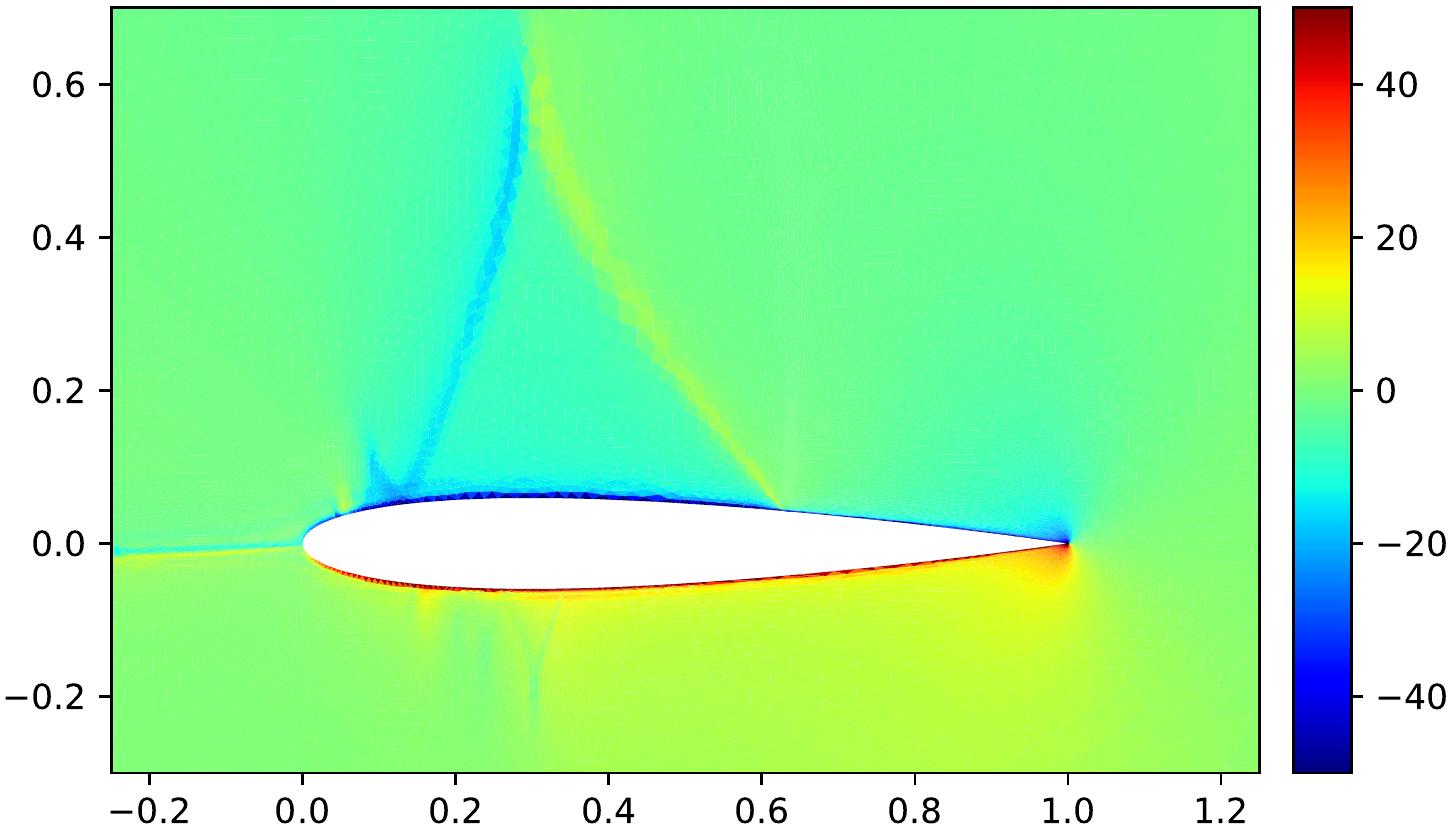}

   \caption{Transonic inviscid flow around the NACA 0012 profile
     ($\Min = 0.8$, $\alpha = 1.25\degree$): 
     $J$ equal to the 
     drag (left) and lift (right) coefficients using the $hp$-anisotropic refinement.}
   \label{fig:trans}
   \end{center}
\end{figure}

\section{Conclusion}
\label{sec:concl}
  
We presented the goal-oriented adaptive discontinuous Galerkin method for
the numerical solution of the Euler equations.
The DG discretization leads to the system of nonlinear algebraic equations
which are solved iteratively by an iterative solver based on
a suitable linearization of the numerical scheme.
This linearization is employed for the definition of the adjoint problem.
The careful treatment of the impermeable wall condition
and the  modification of target functional admit the
adjoint consistent discretization, which was proved analytically and supported by
numerical experiments. Therefore, iterative solvers not-based on the proper differentiation
can be used in the goal-oriented computations.

Furthermore, we extended 
the goal-oriented anisotropic $hp$-mesh adaptive technique from \cite{DWR_AMA,ESCO-18}
to the Euler equations. We presented numerical examples of subsonic as well as transonic
flows demonstrating the computational performance of this adaptive method.
Although we observed the exponential convergence of the error only for some numerical examples,
the potential of the $hp$-adaptive technique is obvious.

%\begin{acknowledgements}
%If you'd like to thank anyone, place your comments here
%and remove the percent signs.
%\end{acknowledgements}

% Authors must disclose all relationships or interests that 
% could have direct or potential influence or impart bias on 
% the work: 
%
 \section*{Conflict of interest}

 The authors declare that they have no conflict of interest.

% BibTeX users please use one of
%\bibliographystyle{spbasic}      % basic style, author-year citations
%\bibliographystyle{spmpsci}      % mathematics and physical sciences
%\bibliographystyle{spphys}       % APS-like style for physics
%\bibliography{ref.bib}   % name your BibTeX data base

\begin{thebibliography}{10}
\providecommand{\url}[1]{{#1}}
\providecommand{\urlprefix}{URL }
\expandafter\ifx\csname urlstyle\endcsname\relax
  \providecommand{\doi}[1]{DOI~\discretionary{}{}{}#1}\else
  \providecommand{\doi}{DOI~\discretionary{}{}{}\begingroup
  \urlstyle{rm}\Url}\fi

\bibitem{BalanWoopenMay16}
Balan, A., Woopen, M., May, G.: Adjoint-based $hp$-adaptivity on anisotropic
  meshes for high-order compressible flow simulations.
\newblock Comput. Fluids \textbf{139}, 47 -- 67 (2016)

\bibitem{RannacherBook}
Bangerth, W., Rannacher, R.: {Adaptive Finite Element Methods for Differential
  Equations}.
\newblock {Lectures in Mathematics. ETH Z{\"u}rich}. {Birkh\"auser Verlag}
  (2003)

\bibitem{ESCO-18}
Barto{\v s}, O., Dolej{\v s}{\'\i}, V., May, G., , Rangarajan, A., Roskovec,
  F.: Goal-oriented anisotropic $hp$-mesh optimization technique for linear
  convection-diffusion-reaction problem.
\newblock Comput. Math. Appl. \textbf{78}(9), 2973--2993 (2019)

\bibitem{bas-reb-JCP}
Bassi, F., Rebay, S.: High-order accurate discontinuous finite element solution
  of the 2{D} {E}uler equations.
\newblock J. Comput. Phys. \textbf{138}, 251--285 (1997)

\bibitem{BR-2DGM}
Bassi, F., Rebay, S.: Numerical evaluation of two discontinuous {G}alerkin
  methods for the compressible {N}avier--{S}tokes equations.
\newblock Int. J. Numer. Methods Fluids \textbf{40}, 197--207 (2002)

\bibitem{BeckerRannacher01}
Becker, R., Rannacher, R.: An optimal control approach to a-posteriori error
  estimation in finite element methods.
\newblock Acta Numerica \textbf{10}, 1--102 (2001)

\bibitem{ceze:2013aniso}
Ceze, M., Fidkowski, K.J.: Anisotropic {$hp$}-adaptation framework for
  functional prediction.
\newblock AIAA Journal \textbf{51}(2), 492--509 (2012)

\bibitem{st_estims_NS}
Dolej{\v s}{\'\i}, V.: A design of residual error estimates for a high order
  {BDF}-{DGFE} method applied to compressible flows.
\newblock Int. J. Numer. Methods Fluids \textbf{73}(6), 523--559 (2013)

\bibitem{impl_eu}
Dolej{\v s}{\'\i}, V., Feistauer, M.: Semi-implicit discontinuous {G}alerkin
  finite element method for the numerical solution of inviscid compressible
  flow.
\newblock J. Comput. Phys. \textbf{198}(2), 727--746 (2004)

\bibitem{DGM-book}
Dolej{\v s}{\'\i}, V., Feistauer, M.: Discontinuous Galerkin Method -- Analysis
  and Applications to Compressible Flow.
\newblock Springer Series in Computational Mathematics 48. Springer, Cham
  (2015)

\bibitem{DWR_AMA}
Dolej{\v s}{\'\i}, V., May, G., Rangarajan, A., Roskovec, F.: A goal-oriented
  high-order anisotropic mesh adaptation using discontinuous {G}alerkin method
  for linear convection-diffusion-reaction problems.
\newblock SIAM Journal on Scientific Computing \textbf{41}(3), A1899--A1922
  (2019)

\bibitem{stdgm_est}
Dolej{\v s}{\'\i}, V., Roskovec, F., Vlas{\'a}k, M.: Residual based error
  estimates for the space-time discontinuous {G}alerkin method applied to the
  compressible flows.
\newblock Comput. Fluids \textbf{117}, 304--324 (2015)

\bibitem{FEI2}
Feistauer, M., Felcman, J., Stra{\v s}kraba, I.: Mathematical and
  {C}omputational {M}ethods for {C}ompressible {F}low.
\newblock Clarendon Press, Oxford (2003)

\bibitem{feikuc2007}
Feistauer, M., Ku{\v c}era, V.: On a robust discontinuous {G}alerkin technique
  for the solution of compressible flow.
\newblock J. Comput. Phys. \textbf{224}, 208--221 (2007)

\bibitem{FidkowskiDarmofal_AIAA11}
Fidkowski, K., Darmofal, D.: Review of output-based error estimation and mesh
  adaptation in computational fluid dynamics.
\newblock AIAA Journal \textbf{49}(4), 673--694 (2011)

\bibitem{Fidkowski11}
Fidkowski, K.J., Luo, Y.: Output-based space-time mesh adaptation for the
  compressible {N}avier-{S}tokes equations.
\newblock J. Comput. Phys. \textbf{230}(14), 5753--5773 (2011)

\bibitem{Giles1997Adjoint}
Giles, M., Pierce, N.: {Adjoint equations in CFD - Duality, boundary conditions
  and solution behaviour}.
\newblock In: 13th Computational Fluid Dynamics Conference. American Institute
  of Aeronautics and Astronautics (1997)

\bibitem{GileSuli02}
Giles, M., S{\"u}li, E.: Adjoint methods for {PDE}s: a posteriori error
  analysis and postprocessing by duality.
\newblock Acta Numerica \textbf{11}, 145--236 (2002)

\bibitem{Harriman2004importance}
Harriman, K., Gavaghan, D.J., Suli, E.: {The importance of adjoint consistency
  in the approximation of linear functionals using the discontinuous Galerkin
  finite element method}.
\newblock Tech. rep., Oxford University Computing Laboratory (2004)

\bibitem{Hartmann2005Role}
Hartmann, R.: The Role of the Jacobian in the Adaptive Discontinuous {G}alerkin
  Method for the Compressible {E}uler Equations.
\newblock Springer Berlin Heidelberg, Berlin, Heidelberg (2005)

\bibitem{Hartmann2006Derivation}
Hartmann, R.: Derivation of an adjoint consistent discontinuous {G}alerkin
  discretization of the compressible {E}uler equations.
\newblock In: G. Lube, G. Papin (Eds): International Conference on Boundary and
  Interior layers (2006)

\bibitem{Hartmann2007Adjoint}
Hartmann, R.: {Adjoint Consistency Analysis of Discontinuous Galerkin
  Discretizations}.
\newblock SIAM J. Numer. Anal. \textbf{45}(6), 2671--2696 (2007)

\bibitem{harthou2}
Hartmann, R., Houston, P.: Adaptive discontinuous {G}alerkin finite element
  methods for the compressible {E}uler equations.
\newblock J. Comput. Phys. \textbf{183}(2), 508--532 (2002)

\bibitem{HH06:SIPG1}
Hartmann, R., Houston, P.: Symmetric interior penalty {DG} methods for the
  compressible {N}avier-{S}tokes equations {I}: Method formulation.
\newblock Int. J. Numer. Anal. Model. \textbf{1}, 1--20 (2006)

\bibitem{HH06:SIPG2}
Hartmann, R., Houston, P.: Symmetric interior penalty {DG} methods for the
  compressible {N}avier-{S}tokes equations {II}: {G}oal-oriented a posteriori
  error estimation.
\newblock Int. J. Numer. Anal. Model. \textbf{3}, 141--162 (2006)

\bibitem{Hartmann2015Generalized}
Hartmann, R., Leicht, T.: Generalized adjoint consistent treatment of wall
  boundary conditions for compressible flows.
\newblock Journal of Computational Physics \textbf{300}, 754--778 (2015)

\bibitem{HartmanLeicht}
Leicht, T., Hartmann, R.: Anisotropic mesh refinement for discontinuous
  {G}alerkin methods in two-dimensional aerodynamic flow simulations.
\newblock Int. J. Numer. Methods Fluids \textbf{56}(11), 2111--2138 (2008)

\bibitem{LoseilleDervieuxAlauzet_JCP10}
Loseille, A., Dervieux, A., Alauzet, F.: Fully anisotropic goal-oriented mesh
  adaptation for 3{D} steady {E}uler equations.
\newblock J. Comput. Phys. \textbf{229}(8), 2866--2897 (2010)

\bibitem{Lu2005posteriori}
Lu, J.: An a posteriori control framework for adaptive precision optimization
  using discontinuous galerkin finite element method.
\newblock Ph.D. thesis, M.I.T. (2005)

\bibitem{RanMayDol-JCP20}
Rangarajan, A., May, G., Dolejsi, V.: Adjoint-based anisotropic $hp$-adaptation
  for discontinuous galerkin methods using a continuous mesh model.
\newblock Journal of Computational Physics \textbf{409}, 109321 (2020)

\bibitem{Sharbatdar2018Mesh}
Sharbatdar, M., Ollivier-Gooch, C.: Mesh adaptation using ${C}^1$ interpolation
  of the solution in an unstructured finite volume solver.
\newblock International Journal for Numerical Methods in Fluids
  \textbf{86}(10), 637--654 (2018)

\bibitem{VassbergJameson_JA10}
Vassberg, J.C., Jameson, A.: In pursuit of grid convergence for two-dimensional
  euler solutions.
\newblock Journal of Aircraft \textbf{47}(4), 1152--1166 (2010)

\bibitem{VendittiDarmofal_JCP02}
Venditti, D., Darmofal, D.: Grid adaptation for functional outputs: Application
  to two-dimensional inviscid flows.
\newblock J. Comput. Phys. \textbf{176}(1), 40--69 (2002)

\bibitem{VendittiDarmofal_JCP03}
Venditti, D., Darmofal, D.: Anisotropic grid adaptation for functional outputs:
  Application to two-dimensional viscous flows.
\newblock J. Comput. Phys. \textbf{187}(1), 22--46 (2003)

\bibitem{Vijaya}
Vijayasundaram, G.: Transonic flow simulation using upstream centered scheme of
  {G}odunov type in finite elements.
\newblock J. Comput. Phys. \textbf{63}, 416--433 (1986)

\bibitem{YanoDarmofal12}
Yano, M., Darmofal, D.L.: An optimization-based framework for anisotropic
  simplex mesh adaptation.
\newblock J. Comput. Phys. \textbf{231}(22), 7626--7649 (2012)

\end{thebibliography}

\end{document}